\newtheorem{theorem}{Theorem}[section]
\newtheorem{lemma}[theorem]{Lemma}
\newtheorem{conjecture}[theorem]{Conjecture}
\newtheorem{proposition}[theorem]{Proposition}
\newtheorem{defn}[theorem]{Definition}
\newtheorem{example}[theorem]{Example}
\newcommand\robout{\bgroup\markoverwith {\textcolor{blue}{\rule[0.5ex]{2pt}{0.4pt}}}\ULon}
\newtheorem{lthm}{Theorem} 
\theoremstyle{remark}
\newtheorem{remark}[theorem]{Remark}
\newcommand{\mylabel}[2]{#2\def\@currentlabel{#2}\label{#1}}
\newcommand{\Gal}{\mathrm{Gal}}
\newcommand{\Hom}{\mathrm{Hom}}
\newcommand{\Symb}{\mathrm{Symb}}
\newcommand{\cG}{\mathcal{G}}
\newcommand{\SL}{\mathrm{SL}}
\newcommand{\cor}{\mathrm{cor}}
\newcommand{\ord}{\mathrm{ord}}
\newcommand{\ZZ}{\mathbb{Z}}
\newcommand{\CC}{\mathbb{C}}
\newcommand{\QQ}{\mathbb{Q}}
\newcommand{\Qp}{\mathbb{Q}_p}
\newcommand{\Fp}{\mathbb{F}_p}
\newcommand{\Zp}{\ZZ_p}
\newcommand{\Sel}{\mathrm{Sel}}
\newcommand{\res}{\mathrm{res}}
\newcommand{\coker}{\mathrm{coker}}
\newcommand{\rank}{\mathrm{rank}}
\newcommand{\cX}{\mathcal{X}}
\DeclareSymbolFont{cyrletters}{OT2}{wncyr}{m}{n}
\DeclareMathSymbol{\Sha}{\mathalpha}{cyrletters}{"58}
\DeclareMathSymbol\dDelta  \mathord{bbold}{"01}
\definecolor{Green}{rgb}{0.0, 0.5, 0.0}
\numberwithin{equation}{section}
\author{Antonio Lei}
\address{Antonio Lei\newline Department of Mathematics and Statistics\\University of Ottawa\\
150 Louis-Pasteur Pvt\\
Ottawa, ON\\
Canada K1N 6N5}
\email{antonio.lei@uottawa.ca}
\author{Robert Pollack}
\address{Robert Pollack\newline Department of Mathematics\\The University of Arizona\\617 N. Santa Rita Ave. \\
Tucson\\ AZ 85721-0089\\USA}
\email{rpollack@arizona.edu}
\author{Naman Pratap}
\address{Naman Pratap\newline Centre for Mathematical Sciences\\ University of Cambridge\\
Wilberforce Road,
Cambridge \\ CB3 0WA,
United Kingdom}
\email{np637@cam.ac.uk}
\subjclass[2020]{11R23}
\keywords{Iwasawa invariants, Mazur--Tate elements, elliptic curves, additive primes}
\begin{document}

\begin{abstract}
We investigate the $\lambda$-invariants of Mazur--Tate elements of elliptic curves defined over the field of rational numbers at primes of additive reduction. We explain their growth and how these invariants relate to other better understood invariants depending on the potential reduction type. We give examples and a conjecture for the additive potentially supersingular case, supported by computational data from Sage in this setting. Further, we extend our results to $\lambda$-invariants of Mazur--Tate elements of cuspidal Hecke eigenforms associated with potentially ordinary $p$-adic Galois representations. 
\end{abstract}
\title[Iwasawa Invariants of Mazur--Tate elements of elliptic curves]{On the Iwasawa Invariants of Mazur--Tate elements of elliptic curves at additive primes}

\maketitle

\section{Introduction}\label{sec:intro}
Let $p$ be an odd prime, and $E$ an elliptic curve defined over $\QQ$, with $f_E$ the weight two cusp form of level $N_E$ attached to $E$. Mazur and Swinnerton-Dyer \cite{MSD74} constructed a $p$-adic $L$-function attached to $E$ when it has good ordinary reduction at $p$. The construction of $p$-adic $L$-functions has been extended to multiplicative and good supersingular primes in \cite{AmiceVelu} and \cite{VISIK}.
In the case of good ordinary and multiplicative primes, the $p$-adic $L$-functions constructed in these works belong to $\Zp[[T]]\otimes \Qp$, and thus have finitely many zeros on the $p$-adic open unit disk. Their Iwasawa invariants (which measure the $p$-divisibility and the number of zeros in the open unit disk) can be defined via the $p$-adic Weierstrass preparation theorem. 
At supersingular primes, the construction in \cite{AmiceVelu,VISIK} yields a pair of $p$-adic $L$-functions which do not necessarily lie in an Iwasawa algebra. Nonetheless, the works \cite{pollack03} and \cite{sprung} show that they can be decomposed into $p$-adic $L$-functions that lie in $\Zp[[T]]\otimes\Qp$ via a logarithmic matrix. In particular, Iwasawa invariants are defined for each of these $p$-adic $L$-functions. 

The central objects of the present article are Mazur--Tate elements attached to elliptic curves, which are constructed using modular symbols and intimately related to the aforementioned $p$-adic $L$-functions. Originally called \emph{modular elements} in \cite{MT}, they can be realized as  $\Theta_M(E)\in\QQ[\Gal(\QQ(\zeta_{M})/\QQ)]$, where  $M\geq 1$ is an integer and $\zeta_M$ is a primitive $M$-th root of unity. The element $\Theta_M(E)$ interpolates the $L$-values of $E$ twisted by Dirichlet characters on $\Gal(\QQ(\zeta_M)/\QQ)$, normalized by appropriate periods (in the original article of Mazur and Tate, only even characters were considered and $\Theta_M$ were constructed as elements in $\QQ[(\ZZ/M\ZZ)^\times/\{\pm1\}]$). We shall concentrate on the Mazur--Tate elements $\vartheta_n(E)$ that belong to $\QQ[\Gal(\QQ(\zeta_{p^n})/\QQ)]$, where $p$ is our fixed prime number and $n\ge0$ is an integer. Furthermore, we may regard $\vartheta_n(E)$ as an element of $\Zp[\Gal(\QQ(\zeta_{p^n})/\QQ)]$ after an appropriate normalisation. These elements satisfy a norm relation as $n$ varies, which can be derived from the action of Hecke operators on modular symbols. One can define Iwasawa invariants of these Mazur--Tate elements, which are intimately linked to the $p$-adic valuations of the $L$-values of $E$ twisted by Dirichlet characters of $p$-power conductor as a consequence of the aforementioned interpolation property. In cases where the construction of a $p$-adic $L$-function is known (i.e., when $E$ has good ordinary, good supersingular, or multiplicative reduction at $p$), one can relate these invariants to those of the $p$-adic $L$-function, see \cite{PW} and \S\ref{sec:known} below for further details.


\subsection{Notation}
Let $\QQ_\infty/\QQ$ denote the cyclotomic $\Zp$-extension of $\QQ$ with $\Gamma \colonequals\Gal(\QQ_\infty/\QQ) \cong \Zp$. We fix a topological generator $\gamma$ of $\Gamma$. Let $\Gamma_n\colonequals\Gamma^{p^n}$ for an integer $n\ge0$.
We write $k_n\colonequals \QQ_\infty^{\Gamma_n}$, which is a cyclic sub-extension of $\QQ_\infty/\QQ$ of degree $p^n$. Let $\mathcal{G}_n \colonequals \Gal(\QQ(\mu_{p^n})/\QQ)$ and $G_n\colonequals \Gal(k_n/\QQ)$. We define the Iwasawa algebra $\Lambda$ as $\displaystyle\varprojlim_{n}\Zp[G_n]$. We fix an isomorphism $\Lambda \cong \Zp[[T]]$ that sends $\gamma$ to $1+T$. The Teichm\"uller character is denoted by $\omega: (\ZZ/p\ZZ)^\times \to \Zp^\times$. In what follows, we write $\theta_{n,i}(E)$ for the $\omega^i$-isotypic component of $\vartheta_{n+1}(E)$. When $i=0$, we simply write $\theta_n(E)$. We use $L_p(E, \omega^i, T)$ to denote the $\omega^i$-isotypic component of the $p$-adic $L$-function of $E$ whenever its construction is possible; for more details, see \S~\ref{ssec: MT and Lp}.

\subsection{Known results}\label{sec:known}
The connection of Iwasawa invariants of Mazur--Tate elements to Iwasawa invariants of $p$-adic $L$-functions is easiest to see in the case of an elliptic curve $E/\QQ$ and a prime $p$ of multiplicative reduction.  In this case, the $p$-adic $L$-function of $E$ is nothing other than the inverse limit of $\theta_n(E)/a_p(E)^{n+1}$ which immediately implies that
$$
\mu(\theta_n(E))=\mu(E) \quad \text{and} \quad \lambda(\theta_n(E)) = \lambda(E)
$$
for $n \gg 0$, where $\mu(E)$ and $\lambda(E)$ are the Iwasawa invariants of the $p$-adic $L$-function of $E$.  

For primes of good ordinary reduction, we have the following theorem.
\begin{theorem}
Let $E/\QQ$ be an elliptic curve with good ordinary reduction at $p$ such that $E[p]$ is irreducible as a Galois module.  If $\mu(E) = 0$, then 
$$
\mu(\theta_n(E)) = 0 \quad \text{and} \quad \lambda(\theta_n(E)) = \lambda(E) 
$$
for $n \gg 0$.  
\end{theorem}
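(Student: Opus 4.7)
The plan is to compare $\theta_n(E)$ with the image in $\Zp[G_n]$ of the Mazur--Swinnerton-Dyer $p$-adic $L$-function $L_p(E,T)$. Let $\alpha\in\Zp^\times$ be the unit root of $X^2-a_p(E)X+p$. The Mazur--Tate element $\theta_n^{(\alpha)}(E)$ attached to the ordinary $p$-stabilization $f_E^\alpha$ forms a norm-coherent system (after twisting by $\alpha^{-(n+1)}$) whose inverse limit recovers $L_p(E,T)\in\Lambda\otimes\Qp$. Under the hypothesis $\mu(E)=0$, the $p$-adic Weierstrass preparation theorem expresses $L_p(E,T)$ as a unit in $\Zp[[T]]$ times a distinguished polynomial of degree $\lambda(E)$, so reduction modulo $\omega_n(T)=(1+T)^{p^n}-1$ yields
\[
\mu(\theta_n^{(\alpha)}(E)) \;=\; 0 \quad\text{and}\quad \lambda(\theta_n^{(\alpha)}(E)) \;=\; \lambda(E)
\]
for all $n$ with $p^n>\lambda(E)$.

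The crux is to transfer these invariants from $\theta_n^{(\alpha)}(E)$ to $\theta_n(E)$ via the $p$-stabilization relation on modular symbols. This relation takes the schematic form
\[
\theta_n(E) \;=\; \theta_n^{(\alpha)}(E) \;+\; (\text{correction involving }\theta_{n-1}(E)),
\]
where the correction arises from the lift $\Zp[G_{n-1}]\to\Zp[G_n]$ induced by the natural projection $G_n\twoheadrightarrow G_{n-1}$, corresponding in Iwasawa coordinates to multiplication by $\omega_n(T)/\omega_{n-1}(T)$, a distinguished polynomial of degree $p^n-p^{n-1}$. For the identity $\lambda(\theta_n(E))=\lambda(E)$ to hold, the correction must sit in a strictly higher Weierstrass filtration than the leading term of $\theta_n^{(\alpha)}(E)$, so that the sum inherits its $\mu$ and $\lambda$ from the main term.

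The principal obstacle, and precisely where irreducibility of $E[p]$ enters, is controlling this correction. Without irreducibility, the correction can dominate: the example $E=X_0(11)$ at $p=5$ gives $\lambda(\theta_n(E))=p^n-1$ rather than $\lambda(E)=0$, driven entirely by the correction term. Under irreducibility, the Greenberg--Vatsal congruence framework (or, more directly, the Iwasawa main conjecture of Kato and Skinner--Urban) forces the Iwasawa invariants of the Pontryagin dual of the Selmer group to agree with those of $L_p(E,T)$, precluding the Eisenstein-type congruences that cause the correction to dominate. Concretely, I would argue by induction on $n$, using the norm relation satisfied by Mazur--Tate elements to propagate $\mu(\theta_{n-1}(E))=0$ and $\lambda(\theta_{n-1}(E))=\lambda(E)$ upward, while showing that the correction term has strictly greater $\lambda$-degree than $\lambda(E)$ and hence does not affect the leading Weierstrass behaviour of $\theta_n(E)$. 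Pinning down this final step -- precisely quantifying how irreducibility rules out the pathological cancellation that occurs for $X_0(11)$ -- is the technical heart of the proof.
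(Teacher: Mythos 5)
Your skeleton is the same as that of the argument the paper relies on (the paper simply cites \cite[Proposition 3.7]{PW}): compare $\theta_n(E)$ with the stabilized element $\theta_n(f_{E,\alpha})$, whose Iwasawa invariants agree with those of $L_p(E,\omega^0,T)$ for $n\gg0$ once $\mu(E)=0$, and use the stabilization relation $\theta_n(f_{E,\alpha})=\theta_n(E)-\tfrac{1}{\alpha}\cor_{n-1}^n(\theta_{n-1}(E))$ together with the fact (Lemma \ref{lem: corlambda}) that $\cor_{n-1}^n$ shifts the $\lambda$-invariant by $p^n-p^{n-1}$, so that an induction transfers $\mu=0$ and $\lambda=\lambda(E)$ from the stabilized element to $\theta_n(E)$.

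The genuine gap is precisely at the step you flag as the technical heart: where irreducibility of $E[p]$ enters. It is not through the main conjecture or Selmer-group invariants (Kato, Skinner--Urban, Greenberg--Vatsal congruences); the hypothesis $\mu(E)=0$ is already assumed, and no comparison of analytic with algebraic invariants is needed. What is actually required is a period statement: irreducibility guarantees that $\ord_p(\Omega_{f_{E,\alpha}}^\pm/\Omega_{f_E}^\pm)=0$ and that the N\'eron periods $\Omega_E^\pm$ agree with the cohomological periods $\Omega_{f_E}^\pm$ up to $p$-adic units (Vatsal's canonical period result, used in \cite[Lemma 3.6]{PW}). These facts are what make the N\'eron-normalized elements $\theta_n(E)$ and $\theta_n(f_{E,\alpha})$ integral with $\mu(\theta_n(f_{E,\alpha}))=0$ for $n\gg0$, and give $\mu\ge0$ for the correction term; only then does the $\lambda$-comparison (and the start and propagation of your induction) go through. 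In the reducible example $E=X_0(11)$, $p=5$, what fails is exactly this period comparison, $\ord_p(\Omega_E^+/\Omega_{f_E}^+)>0$, which is why the correction dominates there --- not a failure of congruence control on Selmer groups. Without the period input your argument cannot exclude non-integrality (negative $\mu$) of the stabilized or corrected terms, and the transfer of invariants breaks down.
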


\begin{proof}
See \cite[Proposition 3.7]{PW}.
\end{proof}

By contrast, for primes $p$ of good supersingular reduction, the $\lambda$-invariants of Mazur--Tate elements are always unbounded.  This is related to the fact that the $p$-adic $L$-function of $E$ is not an Iwasawa function and one instead has a pair of Iwasawa-invariants, $\mu^\pm(E)$ and $\lambda^\pm(E)$ as defined in \cite{pollack03} and \cite{sprung}.  In this case, results of Kurihara and Perrin-Riou imply that these invariants can be read off of the Iwasawa invariants of Mazur--Tate elements.

\begin{theorem}\label{thm:PW-ss}
Let $E/\QQ$ be an elliptic curve with good supersingular reduction at $p$.  
\begin{enumerate}
\item For $n \gg 0$,
$$
\mu(\theta_{2n}(E)) = \mu^+(E) \quad \text{and} \quad
\mu(\theta_{2n-1}(E)) = \mu^-(E).
$$
\item If $\mu^+(E) = \mu^-(E)$, then
$$
\lambda(\theta_n(E)) = q_n + \begin{cases} \lambda^+ & n \text{~even}\\
\lambda^- & n \text{~odd},
\end{cases}
$$
where 
$$
q_n = p^{n-1} - p^{n-2} + \dots + \begin{cases} p -1 & n \text{~even}\\
p^2 - p & n \text{~odd}.
\end{cases}
$$
\end{enumerate}
\end{theorem}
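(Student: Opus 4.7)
The plan is to derive both parts from the Pollack--Sprung decomposition of the supersingular $p$-adic $L$-function, combined with a Kurihara--Perrin-Riou mod-$\omega_n$ analysis. First I would invoke the fact that although $L_p(E, T) \notin \Lambda \otimes \Qp$ in the supersingular case, the works \cite{pollack03} and \cite{sprung} produce Iwasawa functions $L_p^\pm(E, T) \in \Lambda \otimes \Qp$ (with invariants $\mu^\pm(E), \lambda^\pm(E)$) together with Pollack's half-logarithms $\log_p^\pm(T)$ satisfying
\[
L_p(E, T) = L_p^+(E, T)\,\log_p^+(T) + L_p^-(E, T)\,\log_p^-(T).
\]
Up to a unit in $\Lambda$, $\log_p^+(T) = \prod_{m \ge 1} \Phi_{p^{2m}}(1+T)/p$ and $\log_p^-(T) = \prod_{m \ge 1} \Phi_{p^{2m-1}}(1+T)/p$, so $\log_p^\pm$ vanishes at $\zeta - 1$ exactly when $\zeta$ is a primitive $p^m$-th root of unity of the matching parity.

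Next I would identify $\theta_n(E) \in \Zp[G_n] \cong \Lambda/\omega_n\Lambda$, with $\omega_n(T) = (1+T)^{p^n}-1 = T\prod_{m=1}^n \Phi_{p^m}(1+T)$, as the image of a suitably normalised $L_p(E, T)$. Factor $\omega_n = \tilde\omega_n^+\,\tilde\omega_n^-$, where $\tilde\omega_n^+$ gathers the factors $\Phi_{p^{2m}}(1+T)$ with $2m \le n$ and $\tilde\omega_n^-$ gathers $T$ together with the remaining odd-indexed cyclotomic factors. Substituting the Pollack--Sprung decomposition yields congruences of Kurihara--Perrin-Riou type,
\[
\theta_n(E) \equiv (\text{unit})\cdot\tilde\omega_n^{\mp}\cdot L_p^{\pm}(E, T) \pmod{\tilde\omega_n^{\pm}\Lambda},
\]
in which the choice of $\pm$ is dictated by the parity of $n$. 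A direct computation of $\deg \tilde\omega_n^{\mp}$ via the degrees of cyclotomic polynomials $\deg\Phi_{p^m}(1+T) = p^m - p^{m-1}$ shows that this degree equals exactly $q_n$, with the parity convention matching that of the theorem.

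Part (1) then follows immediately: since $\tilde\omega_n^{\mp}$ is distinguished (hence has vanishing $\mu$-invariant), the $\mu$-invariant of $\theta_n(E)$ is inherited from $L_p^\pm(E)$ according to the parity of $n$. For part (2), the hypothesis $\mu^+(E) = \mu^-(E)$ ensures that the two Chinese Remainder components of $\theta_n(E)\in \Lambda/\omega_n$ share a common $\mu$-invariant, so that the distinguished polynomial of $\theta_n(E)$ has total degree $q_n + \lambda^\pm(E)$ as predicted.

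The principal obstacle will be the careful justification of the Kurihara--Perrin-Riou congruences above: one must control the unit factors entering the decomposition and verify that reducing modulo $\tilde\omega_n^\pm$ does not lose Iwasawa-invariant information between the two CRT components. The hypothesis $\mu^+(E) = \mu^-(E)$ in part (2) enters precisely to prevent differential $p$-power cancellations between the two components that would otherwise spoil the clean additive formula for the $\lambda$-invariant.
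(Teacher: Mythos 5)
Your strategy --- rewriting the Mazur--Tate elements in terms of $L_p^{\pm}$ via congruences of Kurihara--Perrin-Riou type and computing the degree of the cyclotomic companion factor --- is exactly the argument underlying the result the paper invokes (the paper's proof is only the citation to \cite[Theorem 4.1]{PW}; compare also \cite[Proposition 6.18]{pollack03}, which the paper reuses in the proof of Theorem \ref{ssquad}). However, two steps, as you have written them, do not work and are precisely where the real content lies. First, $\theta_n(E)$ is \emph{not} the image in $\Zp[G_n]\cong\Lambda/\omega_n\Lambda$ of any normalisation of $L_p(E,T)$: in the supersingular case no such Iwasawa function exists, and what projects to level $n$ is the $p$-stabilised element $\theta_n(E)-\alpha^{-1}\cor_{n-1}^{n}(\theta_{n-1}(E))$ (cf.\ Lemma \ref{lem: corlambda}), one for each root $\alpha,\beta$ of $X^2-a_pX+p$. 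The Kurihara--Perrin-Riou congruence is obtained by writing this relation for \emph{both} roots, substituting the decomposition $L_p(E,\alpha)=L_p^{+}\log_p^{+}+\alpha\,L_p^{-}\log_p^{-}$ (the factor $\alpha$, missing from your displayed identity, is what makes the elimination possible), and solving the resulting linear system to eliminate $\cor_{n-1}^n(\theta_{n-1})$; the parity dependence and the exact cancellation of powers of $\alpha$ against the powers of $p$ inside $\log_p^{\pm}$ fall out of this elimination and are invisible to a naive ``reduce $L_p$ modulo $\omega_n$'' argument.

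Second, the congruence you need is an identity in $\Zp[G_n]$ up to a unit, namely $\theta_n(E)\equiv(\mathrm{unit})\cdot\tilde{\omega}_n^{\mp}\,L_p^{\pm}(E,T)\pmod{\omega_n\Lambda}$, and \emph{not} a pair of congruences modulo $\tilde{\omega}_n^{\pm}\Lambda$ glued by CRT. The CRT route genuinely fails integrally: every $\Phi_{p^j}(1+T)$ is congruent to a power of $T$ modulo $p$, so $\tilde{\omega}_n^{+}$ and $\tilde{\omega}_n^{-}$ are not coprime in $\Lambda$, the splitting of $\Lambda/\omega_n$ into the two components exists only after inverting $p$, and $\mu$- and $\lambda$-invariants cannot be read off componentwise without controlling denominators --- this is exactly the ``loss of information'' you flag as the principal obstacle but do not resolve, and resolving it is the content of the cited results of Kurihara and Perrin-Riou as packaged in \cite{PW}. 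Two smaller bookkeeping corrections: your $\tilde{\omega}_n^{-}$ must not contain the factor $T$ if its degree is to equal $q_n$ (with $T$ included the degree is $q_n+1$, which would shift the asserted formula); and $\theta_n(E)$ is by definition the $\omega^0$-component of $\vartheta_{n+1}(E)$, so matching the parity of $n$ with the sign $\pm$ requires carrying this index shift through the computation. Once the mod-$\omega_n$ congruence is correctly established, both parts of the theorem follow for $n\gg0$ essentially as you indicate.
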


\begin{proof}
See \cite[Theorem 4.1]{PW}.
\end{proof}

\begin{remark}
    The $q_n$ term in the above formula forces the $\lambda$-invariants to be unbounded as $n$ grows.  The interpolation property of the Mazur--Tate elements then implies that the $p$-adic valuation of $L(E,\chi,1)/\Omega_E^+$ (where $\Omega_E^+$ is the real Néron period of $E$) is unbounded as $n$ increases.  The Birch and Swinnerton-Dyer conjecture thus predicts that some algebraic invariant should grow along the cyclotomic $\Zp$-extension.  Consistent with this, it is known that the Tate--Shafarevich group of $E$ (if finite) grows without bound along this extension (see \cite[Theorem 10.9]{kobayashi}).
\end{remark}

For elliptic curves over $\QQ$ with additive reduction at $p$, the Mazur--Tate elements do not immediately give rise to a $p$-adic $L$-function. Moreover, their $\lambda$-invariants are quite large as it was proven that $\lambda(\theta_n(E)) \geq p^{n-1}$ whenever $p$ is an additive prime for $E$ (see \cite[Corollary~5.3]{doyon-lei}).  Despite this, these $\lambda$-invariants appear to satisfy regular formulae as observed in \cite[\S~6]{doyon-lei}.  Indeed, in this work, it was observed that
$$
\lambda(\theta_n(E)) = ap^{n-1} + bq_{n-1} + O(1)
$$
in many examples where $p$ was an additive prime for $E/\QQ$ and $a$ and $b$ were constants independent of $n$.
A main objective of the present article is to give theoretical explanations of these growth patterns of $\lambda(\theta_n(E))$ at additive primes, and to relate them to other better understood invariants. 

\subsection{Main results}
We now discuss the main results we prove in the present article. For an elliptic curve $E/\QQ$ with additive reduction at a prime $p$, our approach differs depending on the `potential reduction' type of $E$. 

Recall that when $E$ has additive reduction at $p$, it achieves semistable reduction over a finite extension of $\QQ$. We first study the case where $E$ achieves semistable reduction over the quadratic field $F=\QQ(\sqrt{(-1)^{p-1}p})$ and relate the Mazur--Tate elements of $E$ with its quadratic twist associated with $F$, denoted by $E^{F}$. Since $E^F$ has good reduction at $p$, the Iwasawa invariants of the $p$-adic $L$-function(s) of $E^F$ are well understood. In particular, we prove:
\begin{lthm}[Theorem \ref{quad}]\label{thmA}
        Let $E/\QQ$ be an elliptic curve with additive reduction at an odd prime $p$. Let $i$ be an even integer between $0$ and $p-2$.
        Assume that 
        \begin{itemize}
        \item the quadratic twist $E^F$ has either good ordinary or multiplicative reduction at $p$; 
        \item the $\mu$-invariant of $L_p(E^F,\omega^{(p-1)/2+i}, T)$ is zero. 
        \end{itemize}
For all $n\gg0$,
    \begin{align*}
    \mu(\theta_{n,i}(E)) &= 0, \\ 
    \lambda(\theta_{n,i}(E))&= \frac{p-1}{2}\cdot{p^{n-1}} 
 + \lambda(E^F, \omega^{{(p-1)/2+i}})\end{align*}
where $\lambda(E^F, \omega^{{(p-1)/2+i}})$ denotes the $\lambda$ invariant of $L_p(E^F, \omega^{{(p-1)/2+i}}, T)$.
\end{lthm}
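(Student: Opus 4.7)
The plan is to leverage the hypothesis that $E^F$ has good ordinary or multiplicative reduction at $p$, so that $\theta_{n,j}(E^F)$ for $j = i + (p-1)/2 \pmod{p-1}$ has stable Iwasawa invariants by the results recalled in Section~\ref{sec:known}. The bridge between $\theta_{n,i}(E)$ and $\theta_{n,j}(E^F)$ is the classical twist identity
\[
L(E,\chi,1) = L(E^F, \chi\psi, 1),
\]
where $\psi$ denotes the quadratic character associated with $F/\QQ$, which has conductor $p$ and is identified with $\omega^{(p-1)/2}$. Evaluating at characters $\chi$ of $\cG_{n+1}$ with $\chi|_{(\ZZ/p\ZZ)^\times} = \omega^i$ matches the interpolated values of $\theta_{n,i}(E)$ with those of $\theta_{n,j}(E^F)$, up to explicit correction factors.

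First, I would derive a twisting identity at the level of modular symbols, expressing the modular symbols (and hence the Mazur--Tate elements) of $E^F$ as an explicit $\psi$-twist of those of $E$. The standard formula involves the Gauss sum $\tau(\psi)$ and the quadratic-twist period ratio $\Omega_{E^F}^\pm/\Omega_E^\pm$. Summing against the appropriate group elements and projecting to the $\omega^i$-isotypic component converts this identity into a relation of the form
\[
\theta_{n,i}(E) = F_n \cdot \theta_{n,\, i+(p-1)/2}(E^F)
\]
in an appropriate $\Lambda$-module, where $F_n$ is an explicit element encoding the Gauss sum and the period ratio.

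Next, I would invoke the known results for $E^F$: since $E^F$ has good ordinary or multiplicative reduction and $\mu(L_p(E^F, \omega^{j}, T)) = 0$, for $n \gg 0$ we have $\mu(\theta_{n,j}(E^F)) = 0$ and $\lambda(\theta_{n,j}(E^F)) = \lambda(E^F, \omega^{j})$. It then remains to compute the Iwasawa invariants of the twist factor $F_n$, which is the main obstacle. One must show that $F_n$ has $\mu$-invariant zero and $\lambda$-invariant exactly $\tfrac{p-1}{2}\cdot p^{n-1}$, which I would attempt via direct manipulation of Gauss sums (using the Gross--Koblitz formula to pin down $v_p(\tau(\psi))$ and to track how the twist acts across the characters of $\Gamma/\Gamma_n$) combined with the known quadratic-twist period relation. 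The hypothesis $\mu(\theta_{n,i}(E)) \geq 0$ ensures that no hidden denominator contaminates the combined $\mu$-invariant after multiplying through by $F_n$. Assembling Steps~1--3 then yields the stated formulae for $\mu(\theta_{n,i}(E))$ and $\lambda(\theta_{n,i}(E))$.
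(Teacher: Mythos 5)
Your overall strategy is the same one the paper uses: combine the twist identity $L(E,\chi,1)=L(E^F,\chi\omega^{(p-1)/2},1)$ with the interpolation property of Mazur--Tate elements, observe that the ratio of Gauss sums is a $p$-adic unit for characters of large conductor, and feed in the period relation $\Omega_{E^F}^{\epsilon'}/\Omega_E^+\sim\sqrt{p}$ (Theorem \ref{thm: pal}), which is the source of the term $\frac{p-1}{2}p^{n-1}$. However, two of your steps have genuine gaps. First, in Step 3 you want to quote that $\mu(\theta_{n,j}(E^F))=0$ and $\lambda(\theta_{n,j}(E^F))=\lambda(E^F,\omega^j)$ for $n\gg0$ as a ``known result'' for good ordinary or multiplicative $E^F$. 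In the good ordinary case this transfer from the $p$-adic $L$-function to the Mazur--Tate elements is Proposition 3.7 of \cite{PW} and requires $E^F[p]$ to be irreducible, which is \emph{not} among the hypotheses of the theorem (compare $X_0(11)$ at $p=5$, where $\lambda(\theta_n)=p^n-1$ while $\lambda(L_p)$ is bounded). The paper sidesteps this entirely: it never uses the Iwasawa invariants of $\theta_{n,j}(E^F)$, but instead evaluates $L_p(E^F,\omega^{(p-1)/2+i},T)$ at $\zeta_{p^n}-1$ via its finite-level interpolation formula (the stabilisation factor $\alpha^{n+1}$ being a unit) and reads off the valuation $\lambda(E^F,\omega^{(p-1)/2+i})/(p^{n-1}(p-1))$ from $\mu=0$ and Weierstrass preparation.

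Second, your proposed factorisation $\theta_{n,i}(E)=F_n\cdot\theta_{n,i+(p-1)/2}(E^F)$ in the group ring, followed by ``adding'' Iwasawa invariants, is not justified. The Gauss-sum ratio is a unit depending on the wild character, and at imprimitive characters of $G_n$ the conductors of $\omega^i\rho$ and $\omega^{i+(p-1)/2}\rho$ differ and Euler factors intervene, so $F_n$ is neither explicit nor easily controlled; moreover $\Zp[G_n]$ has zero divisors, so $\mu$ and $\lambda$ are not additive under multiplication without a separate argument. The paper's proof needs none of this: it evaluates $\theta_{n,i}(E)$ at a single primitive character, obtains $\ord_p(\overline{\chi}(\theta_{n,i}(E)))=\tfrac12+\lambda(E^F,\omega^{(p-1)/2+i})/(p^{n-1}(p-1))$, and then writes $\theta_{n,i}(E)=p^{\mu_n}(T^{\lambda_n}+pg_n(T))u_n(T)$; since the target valuation is $<1$ for $n\gg0$, the hypothesis $\mu_n\ge0$ forces $\mu_n=0$ and $\lambda_n<p^{n-1}(p-1)$, turning the inequality into the stated equality for $\lambda_n$. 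You would also need to use the parity hypothesis on $i$ explicitly, since the period comparison of Theorem \ref{thm: pal} is only available for even twists.
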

Our proof relies on a comparison of the interpolation properties of $\theta_{n,i}(E)$ with those of $\theta_{n,i+\frac{p-1}{2}}(E^F)$.  The corresponding interpolation formulae are nearly the same with the exception of the Néron periods. Here, the ratio of the Néron periods of $E$ and $E^F$ equals $\sqrt{p}$, up to a $p$-unit.  This factor of $\sqrt{p}$ leads to the presence of the term $\frac{p-1}{2}\cdot p^{n-1}$ in the formula above.

\begin{remark}
\label{rmk:periods}
The term $\frac{p-1}{2}\cdot p^{n-1}$ forces the $\lambda$-invariants to grow without bound.  However, unlike the good supersingular case, this is not explained via the Birch and Swinnerton-Dyer conjecture by the growth of the Tate--Shafaverich group along the cyclotomic $\ZZ_p$-extension.  Instead, it is explained by the growth of the $p$-valuation of the ratio of the periods $\Omega_{E/k_n}$ and $\left(\Omega_{E/\QQ}\right)^{p^n}$.  This ratio, in turn, captures the lack of a global minimal model for $E$ over the number field $k_n$. See \eqref{perratio} and Proposition~\ref{fudge}. 
\end{remark}

We can prove a similar result if $E^F$ has good supersingular reduction at $p$, where a formula of $\lambda(\theta_{n,i}(E))$ in terms of the plus and minus $p$-adic $L$-functions of $E^F$ is proven. The formula we prove resembles that of Theorem~\ref{thm:PW-ss}, except for the presence of the extra term $\frac{p-1}{2}\cdot p^{n-1}$ originating from the ratio of periods; see Theorem~\ref{ssquad} for the precise statement.

When $E$ has additive reduction at $p$, but achieves good ordinary reduction over more general extensions, we can derive exact formulae for the $\lambda$-invariants of Mazur--Tate elements, assuming the Birch and Swinnerton-Dyer conjecture. Specifically, we require the $p$-primary part of the Tate--Shafarevich group to be finite over $k_n$ and that the leading term of the Taylor expansion of $L(E/k_n,s)$ at $s=1$ predicted in the Birch and Swinnerton-Dyer conjecture holds up to $p$-adic units; see Conjecture~\ref{conj:pBSD}. In the following theorem, $\cX(E/\QQ_\infty)$ denotes the dual of the Selmer group of $E$ over $\QQ_\infty$. From the work of Delbourgo \cite{del-JNT}, we know that $\cX(E/\QQ_\infty)$ is a finitely generated torsion $\Lambda$-module when $p\geq 5$ is a prime of potentially good ordinary reduction. In particular, we can define the Iwasawa invariants $\mu(\cX(E/\QQ_\infty))$ and $\lambda(\cX(E/\QQ_\infty))$ are defined, and they are shown to be related to the Iwasawa invariants of $\theta_n(E)$.
\begin{lthm}[Theorem \ref{thm: bsd}]\label{thmB}
         Let $E/\QQ$ be an elliptic curve with additive, potentially good ordinary reduction at a prime $p\geq 5$ and minimal discriminant $\Delta_E$. 
        Assume that
        \begin{itemize}
            \item Conjecture~\ref{conj:pBSD} is true over $k_{n}$ for all $n \gg 0$,
            \item  $\lambda(\theta_{n}(E))<p^{n-1}(p-1)$ for $n\gg0$.
        \end{itemize}
Then, when $n$ is sufficiently large, we have 
        \begin{align*}
          \mu(\theta_{n}(E)) &= \mu(\cX(E/\QQ_\infty))\\
            \lambda(\theta_{n}(E))
 &= \frac{(p-1)\cdot \ord_p(\Delta_E)}{12}\cdot p^{n-1}+{\lambda(\cX(E/\QQ_\infty))}.
        \end{align*}
\end{lthm}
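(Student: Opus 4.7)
The strategy is to compute the $p$-adic valuation of the algebraic $L$-value in two different ways---once via the interpolation of the Mazur--Tate element, and once via the conjectural $p$-part of the Birch--Swinnerton-Dyer formula over $k_n$---and to account for the discrepancy via the period comparison of Proposition~\ref{fudge}.

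Concretely, I would first use the interpolation formula for $\theta_{n,0}(E)$: for each character $\chi$ of $G_n$, $\chi(\theta_{n,0}(E))$ recovers the twisted algebraic $L$-value $L(E,\chi^{-1},1)/\Omega_E^+$ up to a Gauss-sum factor, with no Euler factor at $p$ since $E$ has additive reduction. Taking the product over all $\chi \in \hat G_n$, the quantity $\prod_{\chi}\chi(\theta_{n,0}(E))$ recovers $L(E/k_n,1)/(\Omega_E^+)^{p^n}$ up to controllable Gauss-sum contributions. On the arithmetic side, the $\Lambda$-torsion hypothesis on $\cX(E/\QQ_\infty)$ combined with Mazur's control theorem forces $E(k_n)$ to have Mordell--Weil rank zero for $n\gg 0$, so Conjecture~\ref{conj:pBSD} over $k_n$ yields $v_p(L(E/k_n,1)/\Omega_{E/k_n}) = v_p(\#\Sha(E/k_n)[p^\infty]) + O(1)$, with Tamagawa and torsion contributions absorbed in $O(1)$. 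The Iwasawa growth formula gives $v_p(\#\Sha(E/k_n)[p^\infty]) = \mu(\cX(E/\QQ_\infty))\,p^n + \lambda(\cX(E/\QQ_\infty))\,n + O(1)$, and the hypothesis $\mu(\cX(E/\QQ_\infty))=\mu(\theta_{n,0}(E))$ cancels the $p^n$-contribution upon matching the two sides.

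The bridge between the analytic and algebraic sides is the period ratio $\Omega_{E/k_n}/(\Omega_E^+)^{p^n}$, analyzed in Proposition~\ref{fudge}: since $k_n/\QQ$ is totally ramified at $p$ of degree $p^n$ and $d := \ord_p(\Delta_E) < 12$, the Weierstrass change-of-variables between the minimal models of $E$ over $\QQ$ and $k_n$ contributes a $p$-adic valuation of order $p^n d/12$ to this ratio, with the increment between consecutive layers giving precisely $(p-1)d/12 \cdot p^{n-1}$. Under the hypothesis $\lambda(\theta_{n,0}(E)) < p^{n-1}(p-1)$, the Weierstrass preparation in $\Zp[G_n]$ gives a distinguished polynomial representative of $\theta_{n,0}(E)$, and the norm $\prod_\chi \chi(\theta_{n,0}(E))$ can then be translated directly into the $\mu$- and $\lambda$-invariants on the Mazur--Tate side; equating with the analytic expression produces the claimed identity.

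The main obstacle is the fine bookkeeping needed to upgrade an asymptotic comparison into an exact equality for $n\gg 0$: the Gauss sums, Tamagawa numbers, torsion contributions, and residual terms from the Weierstrass preparation each contribute $O(1)$ corrections that must all cancel. The bound $\lambda(\theta_{n,0}(E)) < p^{n-1}(p-1)$ is crucial to keep the Newton polygon of the distinguished polynomial in a single-slope regime, so that the translation between character valuations and the $\lambda$-invariant is clean.
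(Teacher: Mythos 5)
Your overall skeleton (compare the interpolation of $\theta_{n,0}(E)$ with the $p$-part of BSD over $k_n$, bridge the two normalisations by the period ratio of \eqref{perratio} and Proposition~\ref{fudge}, and feed in the growth of $\Sha$ from Theorem~\ref{sha}) is the same as the paper's, but there is a genuine gap at the step where you claim that $\Lambda$-torsionness of $\cX(E/\QQ_\infty)$ together with the control theorem forces $\rank E(k_n)=0$ for $n\gg0$. Torsionness only bounds $\rank E(k_n)$ by the $\lambda$-invariant of $\cX(E/\QQ_\infty)$; it does not make the rank vanish, and the theorem is not restricted to curves with $L(E,1)\neq 0$. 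When the rank is positive, $L(E/k_n,1)=0$, so the rank-zero value $v_p\bigl(L(E/k_n,1)/\Omega_{E/k_n}\bigr)$ you want to feed into Conjecture~\ref{conj:pBSD} does not exist, and your product $\prod_{\chi\in\hat G_n}\chi(\theta_{n,0}(E))$ is literally zero (the trivial character, and by Rohrlich \cite{Rohrlich1984} possibly finitely many further characters, kill it), so the proposed translation of this ``norm'' into $\mu$- and $\lambda$-invariants collapses. Even in rank-zero situations, the full product over $\hat G_n$ re-introduces at every level the imprimitive characters of small conductor, for which neither nonvanishing nor the clean valuation formula follows from the single hypothesis $\lambda(\theta_{n,0}(E))<p^{n-1}(p-1)$; that hypothesis only controls evaluations at characters of the top conductor.

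The paper's proof is built precisely to avoid these issues: it takes the ratio of the leading-term BSD formulas for the consecutive layers $k_{n+1}$ and $k_n$, for $n$ so large that the order of vanishing of $L(E/k_n,s)$ at $s=1$, the Mordell--Weil group $E(k_n)$, and the Tamagawa numbers have all stabilised. In that ratio only the primitive characters of conductor $p^{n+2}$ survive, their $L$-values are nonzero by Rohrlich, the product of Gauss sums cancels the conductor-discriminant contribution $\sqrt{|\Delta_{k_{n+1}}|/|\Delta_{k_n}|}$, the regulator ratio contributes $p^{\rank E(k_{n+1})}$ which cancels against the $-r_\infty$ appearing in the $\Sha$-growth formula of Theorem~\ref{sha}, and the period increment is computed from \eqref{perratio} and Proposition~\ref{fudge}(2). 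To salvage your route you would have to work with leading terms and regulators rather than $L(E/k_n,1)$, and isolate the new characters at each level so that the vanishing twists and lower-level contributions cancel --- which is exactly the consecutive-layer comparison the paper performs, so the missing ingredient is not a minor bookkeeping point but the key device of the argument.
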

Our method is to analyze how each term in the Birch and Swinnerton-Dyer conjecture changes along the cyclotomic $\ZZ_p$-extension.  A key step here relies on a control theorem for the $p$-primary Selmer group of $E$ along $\QQ_\infty$, which in turn governs the growth of the Tate--Shafarevich groups (see Theorems~\ref{thm:control} and \ref{sha}).  From this analysis, we can determine the $p$-adic valuation of $L(E,\chi,1)/\Omega_E$ for Dirichlet characters $\chi$ of $p$-power conductor and thus the $\lambda$-invariant of $\theta_{n,0}(E)$.  The unbounded term in the above formula arises from terms that capture the lack of a global minimal model for $E$ over $k_n$. This formula is consistent with Theorem \ref{thmA}; when good ordinary reduction at $p$ is achieved over a quadratic extension, we have $\ord_p(\Delta_E)=6$.

With Theorems \ref{thmA} and \ref{thmB} in mind, we look for possible generalisations to Hecke eigenforms $f$ on $\Gamma_1(N)$ of general weight $\ge2$ with $a_p(f)=0$ and $p \mid N$. We formulate a ``potentially ordinary'' hypothesis for the $p$-adic Galois representation of $G_\QQ$ associated with $f$ (see \S\ref{sec: results on modularforms}) that is analogous to the property of potentially ordinary reduction for an elliptic curve. Building on the arguments of the recent work \cite{muller24} of Müller on CM modular forms, we are able to show if a cusp form (that is not necessarily CM) satisfies this hypothesis, then it gives rise a cusp form that is ordinary at $p$ after twisting by a character of conductor $p$. In particular, there exists a $p$-adic $L$-function for the twisted form which lies in $\Zp[[T]]$ (assuming the normalisation is given by the cohomological period). This allows us to deduce the following theorem:
\begin{lthm}[Theorem \ref{thm: modforms twist}]\label{thmC}
    Let $p$ be an odd prime and let $f \in S_k(\Gamma_1(N))$ be a $p$-new Hecke eigenform with $k\geq 2$, $p^2 \mid N$. 
    Let $V_f$ denote the $p$-adic representation of $G_\QQ$ associated with $f$. Assume furthermore that 
    \begin{itemize}
        \item  there exists a non-trivial unramified ${G_{\Qp(\mu_p)}}$-subrepresentation of $V_f|_{G_{\Qp(\mu_p)}}$ and 
        \item the Iwasawa invariants satisfy $\mu(\theta_{n,i}(f))=0$ and $\lambda(\theta_{n,i}(f))\leq p^{n-1}(p-1)$ for $n\gg 0$.
    \end{itemize}
     Then, there exist constants $m$ and $\lambda_0$ dependent only on $f$ and $i$, with $0<m<1$ and $\lambda_0\ge0$, such that for all $n\gg 0$, 
    \[\lambda(\theta_{n,i}(f))= m\cdot(p-1)\cdot p^{n-1} + \lambda_0.\]
\end{lthm}

\begin{remark}Let $f \in S_k(\Gamma_0(N),\CC)$ be a Hecke eigenform with $k\geq 2$, $p\mid N$.
\begin{itemize}
    \item One can obtain an infinite family of examples of $f$ satisfying the first hypotheses of Theorem~\ref{thmC} by taking a $p$-ordinary form and twisting it by a character of conductor $p$ to obtain $f$. 
    \item In the formula above, $m = \ord_p(\Omega_f/\Omega_{\tilde{f}})$, where $\tilde{f}$ denotes the aforementioned twist of $f$ that is $p$-ordinary, and $\Omega_f$ (resp.\ $\Omega_{\tilde{f}}$) is  the cohomological period of $f$ (resp.\ $\tilde{f}$). 
\end{itemize}  
\end{remark}
We discuss possible further generalisations of Theorem \ref{thmB} in this direction. 

A control theorem for the Bloch--Kato Selmer groups associated with modular forms at potentially ordinary primes (which follows from techniques developed in \cite{OCHIAI200069}) can be used to establish growth formulae for the Bloch--Kato Tate--Shafarevich groups along the cyclotomic $\Zp$-extension over $\QQ$ assuming that the Pontryagin dual of the Bloch--Kato Selmer group of $f$ over $\QQ_\infty$ is a torsion $\Lambda$-module. This implies that the growth in $\lambda(\theta_{n, i}(f))$ exhibited by Theorem~\ref{thmC} cannot be attributed to the growth of the ($p$-primary) Bloch--Kato--Tate--Shafarevich group associated with $f$. Therefore, we expect the term $\lambda_0$ in Theorem \ref{thmC} to \emph{equal} the algebraic $\lambda$-invariant of the Bloch--Kato Selmer group attached to $f$, as is the case in Theorem \ref{thmB}.

To prove a formula as precise as the one given in Theorem \ref{thmB} in the context of general modular forms (i.e., a formula where the constant $m$ can be expressed in terms of arithmetic invariants attached to $f$), one would need to invoke the $p$-part of the Bloch--Kato Tamagawa number conjecture for the base-change of $V_f$ along the layers of the cyclotomic $\Zp$-extension of $\QQ$. We were unable to find a reference for such a formulation that is suitable for our purposes at the time of writing.




\subsection{Organisation}
We begin with preliminaries related to modular symbols and Mazur--Tate elements associated with elliptic curves over $\QQ$ in \S~\ref{sec:msmt}. In \S~\ref{sec:prelim}, we provide background on elliptic curves with additive reduction and review the notion of `potential semistability', i.e., when $E$ has additive reduction over a field $K$, but attains semistable reduction over a finite extension of $K$. Moreover, we study properties of the Selmer group associated with $E$ at additive potentially good ordinary primes. We use this to show that the growth of the $p$-primary part of the Tate--Shafarevich group of $E$ along the cyclotomic $\Zp$-extension of $\QQ$ is similar to the good ordinary case. In \S~\ref{sec:form1}, we prove Theorems~\ref{thmA} and \ref{thmB}. The potentially supersingular case in the generality of Theorem~\ref{thmB} has eluded us so far, but we provide examples and a conjecture supported by computational data from Sage in this setting in \S~\ref{ss: speculation for pot. ss.}. In this section, we also discuss a couple of approaches towards the potentially supersingular case. More computational data is provided in \S~\ref{pss data}. In \S\ref{sec: results on modularforms}, we prove Theorem~\ref{thmC} by showing the existence of a twist of $f$ which is ordinary at $p$. 

\subsection*{Acknowledgement}
The research of AL is supported by the NSERC Discovery Grants Program RGPIN-2020-04259 and RGPAS-2020-00096. RP's research has been partially supported by NSF grant DMS-2302285 and by Simons Foundation Travel Support Grant for Mathematicians MPS-TSM-00002405.  Parts of this work were carried out during NP's summer internship at the University of Ottawa in the summer of 2023, supported by a MITACS Globalink Scholarship. This article forms part of the master's thesis of NP at IISER, Pune (available \href{http://dr.iiserpune.ac.in:8080/xmlui/handle/123456789/10021}{here}). The authors thank Anthony Doyon and Rik Sarkar for interesting discussions related to the content of the article.
\section{Modular symbols and Mazur--Tate elements}\label{sec:msmt}
\subsection{Modular symbols}
Let $R$ be any commutative ring and, for any integer $g \geq 0$, let $V_g(R)$ be the space of homogeneous polynomials of degree $g$ in the variables $X$ and $Y$ with coefficients in $R$. Let $\dDelta$ denote the abelian group of divisors on $\mathbb{P}^1(\QQ)$, and let $\dDelta^0$ denote the subgroup of degree 0 divisors. Let $\SL_2(\ZZ)$ act on $\dDelta^0$, by linear fractional transformations, which allows us to endow $\Hom(\dDelta^0, V_{g}(R))$ with a right action of $\SL_2(\ZZ)$ via
$$(\varphi |_{\gamma})(D) = (\varphi(\gamma \cdot D))|_{\gamma},$$
where $\varphi \in \Hom(\dDelta^0, V_{g}(R))$, $\gamma \in \SL_2(\ZZ)$ and $D \in \dDelta^0$.
\begin{defn}\label{defn:modsymb}
    Let $\Gamma\leq \SL_2(\ZZ)$ be a congruence subgroup. We define $\Hom_{\Gamma}(\dDelta^0, V_g(R))$ to be the space of $R$-valued \textbf{modular symbols} of weight $g$, level $\Gamma$ for some commutative ring $R$, and we denote this space by $\Symb(\Gamma, V_g(R))$.
\end{defn}
\begin{remark}
    One can identify $\text{Symb}(\Gamma, {V_g(R)})$ with the compactly supported cohomology group $ H^1_c(\Gamma, {V_g(R)})$ (see \cite[Proposition~4.2]{ash-ste}).
\end{remark}
For $f \in S_k(\Gamma)$, we define the \textbf{modular symbol associated with $f$} as 
\[\xi_f: \{s\}-\{r\} \to 2\pi i \int_s^r f(z)(zX+Y)^{k-2}dz,\] 
which is an element of $\Symb(\Gamma, V_{k-2}(\CC))$ as $f$ is a holomorphic cusp form. Let $A_f$ be the field of Fourier coefficients of $f$ and fix a prime $p$. The matrix $\iota \colonequals \begin{psmallmatrix}
    -1& 0 \\ 0 & 1
\end{psmallmatrix}$ acts as an involution on $\Symb(\Gamma, \CC)$ and we decompose $\xi_f=\xi_f^+ + \xi_f^-$ with $\xi_f^\pm$ in the $\pm1$-eigenspace of $\iota$ respectively. By a theorem of Shimura, there exist $\Omega_f^\pm \in \CC$ such that ${\xi_f^\pm/\Omega_f^\pm}$ take values in $V_{k-2}(A_f)$, and in $V_{k-2}(\overline{\QQ}_p)$ upon fixing an embedding of $\overline{\QQ}\hookrightarrow \overline{\QQ}_p$ (which we fix for the rest of the article). Define $\Psi_f^\pm \colonequals \psi_f^\pm/\Omega_f^\pm$, and $\Psi_f \colonequals \Psi_f^+ + \Psi_f^-$ which is in $\Symb(\Gamma, \overline{\QQ}_p)$.  

\begin{remark}[\textbf{On periods}]\label{rem:periods}
Let $\mathcal{O}_f$ denote the ring of integers of the completion of the image of $A_f$ in $\overline{\QQ}_p$. We can choose $\Omega^+$ and $\Omega^-$ so that each of $\Psi_f^+$ and $\Psi_f^-$ takes values in $V_{k-2}(\mathcal{O}_f)$ and that each takes on one value having at least one coefficient in $\mathcal{O}_f^\times$. We denote these periods by $\Omega_f^\pm$; they are called \textbf{cohomological periods} of $f$, which are well-defined up to $p$-adic units (for more details, see \cite[Def. 2.1]{PW}).

For an elliptic curve $E$, we are supplied with the (real and imaginary) \textbf{Néron periods}, which we denote by $\Omega_E^\pm$. They ensure that the modular symbols take values in $\Qp$ but \textit{a priori} do not guarantee integrality. In \S\ref{sec:form1}, we will implicitly assume that the $p$-adic $L$-function of an elliptic curve $E$ is constructed using the Néron periods of $E$. In \S\ref{sec: results on modularforms}, we work with higher weight modular forms, so cohomological periods will be in use.
\end{remark}
\subsection{Mazur--Tate elements and $p$-adic $L$-functions}\label{ssec: MT and Lp}
Recall the following notation given in the introduction. We fix an elliptic curve $E/\QQ$ and let $f_E$ be the weight 2 newform associated with $E$ by the modularity theorem. For a non-negative integer $n$, let $\mathcal{G}_n \colonequals \Gal(\QQ(\mu_{p^n})/\QQ)$. 
For $a \in (\ZZ/p^n\ZZ)^\times$, we write $\sigma_a\in\cG_n$ for the element that satisfies $\sigma_a(\zeta)=\zeta^a$ for $\zeta \in \mu_{p^n}$.
\begin{defn}
    For a modular symbol $\varphi \in \Symb(\Gamma, V_g(R))$, define the associated Mazur--Tate element of level $n\geq 1$ by 
\[\vartheta_n(\varphi)= \sum_{a \in (\ZZ/p^n\ZZ)^\times}\varphi(\{\infty\}-\{a/p^n\})|_{(X,Y)=(0,1)}\cdot \sigma_a \in R[\mathcal{G}_n].\]
    When $R$ is a subring of $\overline{\QQ}_p$, decomposing $\mathcal{G}_{n+1}=G_n\times(\ZZ/p\ZZ)^\times$ with $G_n\cong\Gal(k_{n}/\QQ)$, one can project $\vartheta_n(\varphi)$ to $R[G_n]$ by the characters $\omega^i: (\ZZ/p\ZZ)^\times \to \Zp^\times$, where $0\leq i \leq p-2$. We define the \emph{$\omega^i$-isotypic component of the $p$-adic Mazur--Tate element} of level $n$ associated with a
    cusp form $f\in S_k(\Gamma)$ as 
    \[\theta_{n,i}(f)\colonequals \omega^i(\vartheta_{n+1}(\Psi_f)) \in \overline{\QQ}_p[G_n].\] 
\end{defn} 
We define $\theta_{n,i}(E)\colonequals\theta_{n,i}(\Psi_{f_E}) \in \Qp[G_n]$, where the normalisation is given by either of the two sets of periods discussed in Remark \ref{rem:periods}. 
\begin{proposition}\label{interpprop}
    For a character $\chi$ on $G_n$, $\theta_{n, i}(f)$ satisfies  the following interpolation property 
\[\chi(\theta_{n,i}(f))=\tau(\omega^i\chi)\cdot\frac{L(f, \overline{\omega^i\chi},1)}{\Omega^{\epsilon}},\]
where $\tau$ denotes the Gauss sum, and $\epsilon\in\{+,-\}$ is the sign of $\omega^i(-1)$.
\end{proposition}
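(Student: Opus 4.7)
The plan is to unwind the definition of $\theta_{n,i}(f)$ into an explicit character sum on modular symbol values and recognize the result as the classical Birch-type formula expressing twisted period integrals in terms of Dirichlet-twisted $L$-values.

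First, I would expand $\chi(\theta_{n,i}(f))$ using the decomposition $\cG_{n+1}=G_n\times(\ZZ/p\ZZ)^\times$. Under this decomposition the composite $\omega^i\chi$ becomes a Dirichlet character modulo $p^{n+1}$, and unwinding the definition of $\vartheta_{n+1}(\Psi_f)$ gives
\[\chi(\theta_{n,i}(f)) = \sum_{a\in(\ZZ/p^{n+1}\ZZ)^\times}(\omega^i\chi)(a)\cdot \Psi_f(\{\infty\}-\{a/p^{n+1}\})|_{(X,Y)=(0,1)}.\]
Because $G_n$ is a pro-$p$ group and $p$ is odd, one has $\chi(-1)=1$, hence $(\omega^i\chi)(-1)=\omega^i(-1)=(-1)^i$. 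Consequently only the $\epsilon$-eigencomponent $\Psi_f^\epsilon=\xi_f^\epsilon/\Omega^\epsilon$ under the involution $\iota$ contributes to the sum, and the displayed expression equals $(\Omega^\epsilon)^{-1}$ times the analogous sum with $\Psi_f$ replaced by $\xi_f$.

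Second, I would invoke the classical identity
\[\sum_{a\bmod p^{n+1}}(\omega^i\chi)(a)\cdot 2\pi i\int_{\infty}^{a/p^{n+1}}f(z)\,dz = \tau(\omega^i\chi)\cdot L(f,\overline{\omega^i\chi},1),\]
valid for $\omega^i\chi$ primitive of conductor $p^{n+1}$. This is proved in the standard way by expanding the twisted $L$-series as a Dirichlet series, substituting the definition of the Gauss sum, and interchanging summation with the Mellin representation of $\xi_f$; it is the same input used in the Mazur--Tate--Teitelbaum construction of $p$-adic $L$-functions. Combining it with the first step and dividing by $\Omega^\epsilon$ yields exactly the stated interpolation formula.

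The only genuine subtlety is the case where $\omega^i\chi$ fails to be primitive of conductor $p^{n+1}$: then the identity above acquires an Euler factor at $p$, and $\tau(\omega^i\chi)$ must be interpreted as the Gauss sum of the underlying primitive character. Since the $\lambda$-invariant computations in the sequel ultimately depend on the values of $\chi(\theta_{n,i}(f))$ for $\chi$ primitive of conductor $p^{n+1}$, this is a routine bookkeeping step that does not affect the applications, and is what I expect to occupy most of the technical effort in a fully written-out proof.
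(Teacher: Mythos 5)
Your proposal is correct and follows the same route as the paper, which simply cites Equation 8.6 of Mazur--Tate--Teitelbaum; that reference is precisely the character-sum expansion plus the classical Birch-lemma identity that you write out. Your closing caveat about imprimitive $\omega^i\chi$ acquiring an Euler factor is also the right reading of the statement, which should be understood for characters of exact conductor $p^{n+1}$.
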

\begin{proof}
    See \cite[Equation 8.6]{MTT}, and consider the projection described above. 
\end{proof}
Let $\gamma_n$ be a generator of ${G}_n$. Then, for any element $F \in \Zp[{G}_n]$, we may write it as a polynomial 
$\sum_{i=0}^{p^n-1}a_iT^i$ with $T=\gamma_n-1$. 
\begin{defn}[Iwasawa invariants]
    The $\mu$ and $\lambda$-invariants of $F=\sum_{i=0}^{p^n-1}a_iT^i \in \Zp[G_n]$ are defined as 
\begin{align*}
    \mu(F) &= \underset{i}{\min}\{\ord_p(a_i)\},\\
    \lambda(F) &= \min\{ i : \ord_p(a_i) = \mu(F)\}
\end{align*}
where $\ord_p$ is the $p$-adic valuation such that $\ord_p(p)=1$. 
\end{defn}
These invariants are independent of the choice of $\gamma_n$. One can directly define $\mu$ and $\lambda$-invariants for an element of the finite level group algebra $\Zp[G_n]$ which are equivalent to the above definitions; for more details, see \cite[\S~3.1]{PW}.

Finally, we briefly recall the construction of the $p$-adic $L$-function of a modular form $f \in S_k(\Gamma_0(N))$ when $p \nmid a_p(f)\cdot N$ in terms of its Mazur--Tate elements. See \cite[\S~3.2]{PW} for more details.

Let $\alpha$ denote the unique $p$-adic unit root of the Hecke polynomial $X^2-a_p(f)X+p^{k-1}$. We consider the $p$-stabilisation \[f_{\alpha}(z)\colonequals f(z)- \frac{p^{k-1}}{\alpha}f(pz),\]
which results in a norm-compatible system given by $\{\frac{1}{\alpha^{n+1}} \theta_{n,i}(f_{\alpha})\}_n$.
Then, \[L_p(f, \omega^i)=\varprojlim_{n}\frac{1}{\alpha^{n+1}} \theta_{n,i}(f_{\alpha})\]
is the $\omega^i$-isotypic component of the $p$-adic $L$-function attached to $f$. This is an element of $\Lambda$ if we normalise the modular symbols by the cohomological periods. When $f$ corresponds to an elliptic curve over $\QQ$, we use the N\'eron periods for normalisation, so $L_p(E,\omega^i) \in \Lambda \otimes \Qp$. We use the notation $L_p(f, \omega^i, T)$ for the image of $L_p(f, \omega^i)$ under the isomorphism $\Lambda\cong\Zp[[T]]$. 
\section{Preliminaries: Elliptic curves and additive reduction}\label{sec:prelim}
In this section, we recall certain facts about elliptic curves over number fields that have additive reduction at a finite place $v$ above $p$. We shall consider the base-change of an elliptic curve $E/\QQ$ to a number field, as well as the completion of a number field at a finite place (to which we refer as a $p$-adic field). We say that $E$ has \textit{semi-stable} reduction at $v$ if it has either good or multiplicative reduction at $v$. We begin with the following well-known result.
\begin{theorem}[Semi-stable reduction theorem]\label{thm:semistable}
    Let $K$ be a $p$-adic field. There exists a finite extension $K'/K$ such that $E$ has semi-stable reduction over $K'$. 
\end{theorem}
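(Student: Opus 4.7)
The plan is to exploit the classical dichotomy provided by the $j$-invariant. Let $v$ denote the normalised valuation on $K$. If $v(j(E)) \geq 0$, the goal is to show that $E$ becomes isomorphic, over a finite extension $K'/K$, to an elliptic curve admitting a Weierstrass equation with unit discriminant, thereby acquiring good reduction over $K'$. If $v(j(E)) < 0$, the goal is to use Tate uniformisation to realise $E$ as a twist of the Tate curve $E_q$, where $q \in K^\times$ is the unique parameter with $v(q) > 0$ and $j(q) = j(E)$; this will yield (potentially split) multiplicative reduction after at most a quadratic extension.

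For the potentially multiplicative case $v(j(E)) < 0$, I would use that $E$ is classified among twists of $E_q$ by a class in $H^1(G_K, \mathrm{Aut}(E_q)) = H^1(G_K, \{\pm 1\})$, noting that $v(j(q)) < 0$ forces $j(q) \notin \{0,1728\}$ so that $\mathrm{Aut}(E_q) = \{\pm 1\}$. Hence the twist corresponds to a quadratic character $\chi$ of $G_K$, and the fixed field $K' = K^{\ker\chi}$ is an extension of degree at most two over which $E_{K'} \cong E_{q,K'}$ acquires multiplicative reduction.

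For the potentially good case $v(j(E)) \geq 0$, I would first produce an elliptic curve $E_0$ defined over $\mathcal{O}_K$ with $j(E_0) = j(E)$ and unit discriminant. For $j(E) \notin \{0,1728\}$ and residue characteristic $\geq 5$ this follows from the explicit normalisation of the Weierstrass equation parametrising the $j$-line; the exceptional $j$-values and small residue characteristics admit elementary separate treatments. Since $E$ and $E_0$ share a $j$-invariant they become isomorphic over a finite extension $K'/K$, and $E$ inherits the good reduction of $E_0$ over $K'$. A conceptually cleaner alternative is the N\'eron--Ogg--Shafarevich criterion: for an auxiliary prime $\ell \neq p$, the action of inertia on $E[\ell]$ factors through a finite quotient precisely when $v(j(E)) \geq 0$, and any finite extension trivialising this action yields good reduction.

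The substantive step I expect to be the main obstacle is the equivalence between integrality of $j(E)$ and potential good reduction (together with the parallel assertion that non-integrality forces potential multiplicative reduction), since this is where the full force of either the analytic theory of the Tate curve or N\'eron--Ogg--Shafarevich must be brought to bear. Because the result is classical and tangential to the new contributions of the present paper, I would present it essentially as an invocation of the standard treatment in Silverman's \emph{Advanced Topics in the Arithmetic of Elliptic Curves}, Chapter V, rather than redeveloping the theory here.
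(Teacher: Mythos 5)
Your proposal is correct, but it follows a different classical route than the one the paper invokes. The paper's entire proof is a citation of Silverman, \emph{The Arithmetic of Elliptic Curves}, Proposition VII.5.4, whose argument passes to a finite extension where the curve can be put in Legendre form $y^2=x(x-1)(x-\lambda)$ and then checks directly, after normalising $\lambda$ under the six-fold ambiguity, that this equation already has good or multiplicative reduction; this is short, elementary, and makes no use of the $j$-invariant dichotomy. You instead argue via integrality of $j(E)$: Tate uniformisation and the computation $H^1(\Gal(\overline{K}/K),\mathrm{Aut}(E_q))=H^1(\Gal(\overline{K}/K),\{\pm1\})$ in the non-integral case, and construction of a good-reduction curve with the same $j$-invariant (or N\'eron--Ogg--Shafarevich/Serre--Tate) in the integral case. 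Your route is heavier (it needs the Tate curve and the criterion ``potential good reduction $\Leftrightarrow$ $j$ integral,'' which you rightly identify as the substantive input and would cite from Silverman's \emph{Advanced Topics}, Chapter V), but it buys more refined information that the Legendre-form proof does not immediately give: it identifies the potential reduction type and shows a quadratic extension suffices in the potentially multiplicative case --- facts the paper itself uses later in \S\ref{sec:potmult}. One imprecision to fix: the statement ``inertia acts on $E[\ell]$ through a finite quotient precisely when $v(j(E))\geq 0$'' is vacuous as written, since any action on the finite group $E[\ell]$ factors through a finite quotient; the correct criterion concerns the Tate module $T_\ell E$ (equivalently $E[\ell^\infty]$), or alternatively one invokes the Serre--Tate/Raynaud criterion that trivialising inertia on $E[m]$ for a single $m\geq 3$ prime to the residue characteristic already forces good reduction, which is exactly the statement the paper later cites from Serre--Tate. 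Since both you and the paper ultimately defer to the standard literature, the difference is one of which classical argument is being pointed to, not of rigour.
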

\begin{proof}
See \cite[Proposition VII.5.4]{Si}.
\end{proof}
\begin{remark}
   We recall that if $E$ has additive reduction at $p$, it attains semi-stable reduction at a place $v$ after a base change to a finite extension. If it has good reduction at $p$, then the reduction type remains the same for any places above $p$. If it has nonsplit multiplicative reduction at $p$, it becomes split after a base change to a quadratic extension.
\end{remark}
We say that $E$ has \textit{potentially good reduction} at $p$ if there exists a finite extension $F/\QQ$ such that the base-change of the curve to $F$ has good reduction at the places of $F$ above $p$. By \cite[ Prop. VII.5.5]{Si}, this is equivalent to saying that the $j$-invariant of the curve is a $p$-adic integer.
\textit{Potentially multiplicative reduction} is defined in a similar way.
\subsection{Potentially good reduction}\label{ssec: potgoodred}
In this subsection, we assume that $E$ has potentially good reduction at $p$. Let $K$ be a $p$-adic field. Let $m$ be an integer greater than 2 and coprime to $p$. Let $K^{ur}$ be the maximal unramified extension of $K$. Define $L\colonequals K^{ur}(E[m])$. The extension $L$ is independent of $m$. Moreover, we have the following lemma. 
\begin{lemma}[Serre--Tate]
The field $L$ is the minimal extension of $K^{ur}$ where $E$ achieves good reduction.
\end{lemma}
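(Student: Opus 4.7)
My plan is to deduce the lemma from the Néron--Ogg--Shafarevich criterion together with its Serre--Tate refinement. The crucial preliminary observation is that any algebraic extension $F/K^{ur}$ is totally ramified, because $K^{ur}$ has algebraically closed residue field and so any algebraic extension has the same (algebraically closed) residue field. Consequently, the absolute Galois group $G_F = \Gal(\overline{F}/F)$ equals its own inertia subgroup, and for it to act ``unramifiedly'' on the Tate module $T_\ell E$ means exactly to act trivially.

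\textbf{Good reduction over $L$.} By construction, every point of $E[m]$ is $L$-rational, so $G_L$ acts trivially on $E[m]$. I would upgrade this to a trivial action on $T_\ell E$ for an appropriate prime $\ell \mid m$. The hypothesis that $E$ has potentially good reduction at $p$ forces the image of $G_L$ in $\mathrm{Aut}(T_\ell E)$ to be finite (this is the content of Serre--Tate: over the finite extension where $E$ acquires good reduction, the action is trivial, so the image of $G_L$ lies in a finite quotient). On the other hand, since $m \geq 3$ and $m$ is coprime to $p$, one can choose $\ell \mid m$ so that the kernel of the reduction map $\GL_2(\ZZ_\ell) \to \GL_2(\ZZ_\ell/m\ZZ_\ell)$ is torsion-free (any odd $\ell \mid m$ works, and if $m$ is a power of $2$ then $m \geq 4$ and the mod $4$ kernel is torsion-free). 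A finite subgroup contained in a torsion-free group must be trivial, so $G_L$ acts trivially on $T_\ell E$, and $E/L$ has good reduction by Néron--Ogg--Shafarevich.

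\textbf{Minimality.} Suppose $F \supseteq K^{ur}$ is any field over which $E$ achieves good reduction. The Néron model $\mathcal{E}/\mathcal{O}_F$ is then an abelian scheme, so its $m$-torsion $\mathcal{E}[m]$ is finite étale over $\mathcal{O}_F$ (as $m$ is coprime to the residue characteristic $p$). Because $F/K^{ur}$ is totally ramified, the residue field of $F$ is the algebraically closed residue field of $K^{ur}$, so any finite étale $\mathcal{O}_F$-group scheme is constant. In particular $E[m] \subseteq F$, giving $L = K^{ur}(E[m]) \subseteq F$.

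\textbf{Main obstacle.} The only non-formal input is the Serre--Tate step used in the first half: promoting triviality on $E[m]$ for a single $m \geq 3$ coprime to $p$ to triviality on the full Tate module. It combines the finiteness of the inertia image (the genuine arithmetic content of potential good reduction) with the torsion-freeness of an appropriate congruence subgroup of $\GL_2(\ZZ_\ell)$. Rather than reprove this classical fact, I would cite it from Silverman's \emph{Advanced Topics in the Arithmetic of Elliptic Curves} (Theorem IV.10.3).
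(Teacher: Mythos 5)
Your argument is correct, and it is essentially the same approach as the paper's: the paper simply cites Serre--Tate (\emph{loc.\ cit.}, Section 2, Corollaries 2 and 3), whose proof is exactly the combination you give of the N\'eron--Ogg--Shafarevich criterion, the finiteness of the inertia image coming from potential good reduction together with torsion-freeness of the relevant congruence kernel in $\GL_2(\ZZ_\ell)$, and constancy of $\mathcal{E}[m]$ over the strictly henselian base for minimality. Nothing further is needed.
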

\begin{proof}
    See \cite[Section 2, Corollaries 2 and 3]{serretate}.
\end{proof}
Write $\Phi\colonequals \Gal(L/K^{ur})$ and define the \emph{semistability defect} of $E$ as $e\colonequals \#\Phi$ (which depends on $E$ and $p$; we suppress it from the notation for simplicity). We see that $\Phi$ is the inertial subgroup of $\Gal(L/K)$. For a description of $\Phi$ in the case when $p\in\{2,3\}$, see \cite{Kraus1990}. 

When $p\ge5$, the discussion in \cite[Section 5.6]{Serre1971/72} tells us that $\Phi$ is cyclic of order 2, 3, 4 or 6.  Furthermore, the size of $\Phi$ is given by
 \begin{equation}\label{eq: semistabilitydef}
 e = \frac{12}{\text{gcd}(12,\ord_p(\Delta_E))},
 \end{equation}
 where $\Delta_E$ is the minimal discriminant of $E/\QQ$.
 This allows us to show, for $p\geq 5$, that $E$ achieves good reduction over an extension of degree at most $6$.
\begin{lemma}\label{lem: Kgdeg}
    Let $p\geq 5$. Suppose that $E$ has additive potentially good reduction at $p$. Then the semistability defect $e$ is the smallest integer $e\in \{2,3,4,6\}$ such that $E$ obtains good reduction over $\Qp(\sqrt[e]{p})$.
\end{lemma}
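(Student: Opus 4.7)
The plan is to unpack the definition of the semistability defect $e$ in terms of $L = K^{ur}(E[m])$ and then identify $L$ explicitly as a tame totally ramified extension of $\Qp^{ur}$. Since the excerpt already tells us that for $p \geq 5$ the inertia group $\Phi = \Gal(L/\Qp^{ur})$ is cyclic of order $e \in \{2,3,4,6\}$, and since $\gcd(e,p)=1$, the extension $L/\Qp^{ur}$ is totally tamely ramified of degree $e$. By the standard structure theorem for tame totally ramified extensions of a complete discretely valued field, any such extension is obtained by adjoining an $e$-th root of a uniformizer. Taking $p$ as our uniformizer of $\Qp^{ur}$, this yields $L = \Qp^{ur}(\sqrt[e]{p})$.

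Next I would set $K_e := \Qp(\sqrt[e]{p})$ and verify that $E$ achieves good reduction over $K_e$. The key observation is that $K_e^{ur} = \Qp^{ur} \cdot K_e = \Qp^{ur}(\sqrt[e]{p}) = L$, so $E$ has good reduction over $K_e^{ur}$. Then I would invoke the Néron--Ogg--Shafarevich criterion: good reduction is detected by the triviality of inertia on $E[m]$ (for any $m$ coprime to the residue characteristic), and this condition descends from $K_e^{ur}$ to $K_e$. Hence $E$ has good reduction over $K_e$ itself.

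For minimality, suppose $e' \in \{2,3,4,6\}$ is such that $E$ achieves good reduction over $\Qp(\sqrt[e']{p})$. Passing to the maximal unramified extension, $E$ achieves good reduction over $\Qp^{ur}(\sqrt[e']{p})$. Since $L = \Qp^{ur}(\sqrt[e]{p})$ is by definition the \emph{minimal} such extension of $\Qp^{ur}$, we must have $L \subseteq \Qp^{ur}(\sqrt[e']{p})$, which forces $e \mid e'$. A quick inspection of the divisibility relations inside $\{2,3,4,6\}$ shows that $e$ is itself the smallest element divisible by $e$ in each case $e \in \{2,3,4,6\}$, so $e' \geq e$ with equality attained by $e' = e$.

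The main obstacle is really just the identification $L = \Qp^{ur}(\sqrt[e]{p})$, which rests on tame ramification theory and the hypothesis $p \geq 5$ (so that $e$ is coprime to $p$); everything else is bookkeeping with Néron--Ogg--Shafarevich and divisibility. No extra assumption beyond what is already stated in the excerpt (in particular the classification of $\Phi$ recalled just before the lemma) should be needed.
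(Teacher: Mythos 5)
Your proof is correct and follows essentially the same route as the paper: since $\Phi$ is cyclic of order $e$ prime to $p$, the extension $L/\Qp^{ur}$ is tame and totally ramified, hence $L=\Qp^{ur}(\sqrt[e]{p})$, and good reduction then descends through the unramified extension $\Qp^{ur}(\sqrt[e]{p})/\Qp(\sqrt[e]{p})$. Your explicit minimality check, using that $L$ is minimal over $\Qp^{ur}$ by the Serre--Tate lemma so that $e\mid e'$, only makes explicit what the paper leaves implicit.
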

\begin{proof}
    In this case, $\Phi= \Gal(L/\Qp^{ur})$ is cyclic of order $e$. So, $L/\Qp^{ur}$ is tamely ramified and cyclic of order $e$, thus $L=\Qp^{ur}(\sqrt[e]{p})$. As good reduction is invariant under unramified extensions, we conclude that $E$ attains good reduction over $\Qp(\sqrt[e]{p})$. 
\end{proof}
\begin{lemma}\label{ediv}
     Assume that $E$ has potentially good reduction at $p\geq 5$ and
that $e>2$. Then $E$ is potentially ordinary at $p$ if and only if $e$ divides $p-1$. If $E$ is potentially supersingular at $p$ then $e$ divides $p+1$.
\end{lemma}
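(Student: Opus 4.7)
The plan is to translate the statement into a question about the automorphism group of the reduced curve, and then invoke classical facts about special $j$-invariants. First, I will show that $\Phi$ embeds into $\mathrm{Aut}_{\bar k}(\tilde E)$, where $\tilde E$ is the reduction of $E$ over the residue field $\bar k = \overline{\Fp}$ of $L$ (well-defined since $E$ has good reduction over $L$). Because $L/K^{\mathrm{ur}}$ is totally ramified of degree $e$, the group $\Phi = \Gal(L/K^{\mathrm{ur}})$ fixes $\bar k$ pointwise, and since it preserves the reduction of the origin (a $K$-rational point), the induced action on $\tilde E$ lies in $\mathrm{Aut}_{\bar k}(\tilde E)$. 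Injectivity will follow from the minimality of $L$: if some non-trivial $\sigma \in \Phi$ acted as the identity on $\tilde E$, then it would act trivially on $T_\ell E \cong T_\ell \tilde E$ for any prime $\ell \neq p$, so by N\'eron--Ogg--Shafarevich $E$ would already attain good reduction over the subextension $L^{\langle\sigma\rangle} \supsetneq K^{\mathrm{ur}}$, contradicting the Serre--Tate characterisation of $L$ as the minimal such extension.

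Next, I will use the standard classification of $\mathrm{Aut}_{\bar k}(\tilde E)$ for $p \geq 5$: it is cyclic of order $2$, $4$, or $6$, according to whether $j(\tilde E) \notin \{0,1728\}$, $j(\tilde E) = 1728$, or $j(\tilde E) = 0$, respectively. Combined with the hypothesis $e = |\Phi| > 2$, the embedding forces $j(\tilde E) = 0$ when $e \in \{3, 6\}$ and $j(\tilde E) = 1728$ when $e = 4$.

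Finally, I will apply the classical criteria for supersingularity at these two special $j$-invariants in characteristic $p \geq 5$: the curve with $j = 0$ is ordinary iff $p \equiv 1 \pmod 3$ (equivalently, $3 \mid p-1$), and the curve with $j = 1728$ is ordinary iff $p \equiv 1 \pmod 4$ (equivalently, $4 \mid p-1$); the supersingular case corresponds to $3 \mid p+1$ and $4 \mid p+1$, respectively. Reading off the three cases $e \in \{3,4,6\}$, and using that $p$ is odd so that $6 \mid p \pm 1 \Leftrightarrow 3 \mid p \pm 1$, yields exactly the divisibilities in the statement. The main obstacle is establishing the injectivity $\Phi \hookrightarrow \mathrm{Aut}_{\bar k}(\tilde E)$; once that is in place, the rest is a direct dictionary between orders of cyclic subgroups of $\mathrm{Aut}(\tilde E)$ and classical congruence conditions for supersingularity.
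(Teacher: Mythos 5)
Your argument is correct, and it is genuinely more self-contained than what the paper does: the paper simply cites \cite[Lemma 2.1]{del-JNT} for this statement, whereas you reconstruct the classical Serre--Tate argument behind it. Your route --- embedding $\Phi=\Gal(L/K^{\mathrm{ur}})$ into $\mathrm{Aut}_{\bar k}(\tilde E)$ (possible since $\Phi$ acts trivially on the residue field), using that for $p\geq 5$ this automorphism group is cyclic of order $2$, $4$ or $6$ according to $j\notin\{0,1728\}$, $j=1728$, $j=0$, and then invoking Deuring's congruence criteria ($j=0$ ordinary iff $3\mid p-1$, $j=1728$ ordinary iff $4\mid p-1$, with $p$ odd turning $3\mid p\pm1$ into $6\mid p\pm1$) --- gives exactly the stated divisibilities for $e\in\{3,4,6\}$, and the ``if and only if'' follows since $e\mid p-1$ and $e\mid p+1$ are mutually exclusive for $e>2$. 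Two small points: in the injectivity step, the contradiction with the Serre--Tate minimality of $L$ is that $E$ would have good reduction over $L^{\langle\sigma\rangle}\subsetneq L$, so $L$ could not be contained in that field (your phrase ``$L^{\langle\sigma\rangle}\supsetneq K^{\mathrm{ur}}$'' is not quite the relevant containment, and indeed may fail if $\langle\sigma\rangle=\Phi$, in which case one instead contradicts additive reduction over $K^{\mathrm{ur}}$); and injectivity can be had more cheaply by noting that trivial action on $\tilde E$ forces trivial action on $E[m]\cong\tilde E[m]$ for $m\geq 3$ prime to $p$, so $\sigma$ fixes $L=K^{\mathrm{ur}}(E[m])$ pointwise. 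What your approach buys is a proof readable without consulting Delbourgo; what the citation buys the paper is brevity and the precise bookkeeping (e.g.\ cyclicity of $\Phi$ and the formula \eqref{eq: semistabilitydef}) already recorded in the literature.
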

\begin{proof}
    See \cite[Lemma 2.1]{del-JNT}.
\end{proof}
\subsection{Potentially multiplicative reduction}\label{sec:potmult}
When $E/\QQ$ has potentially multiplicative reduction at $p$, it achieves multiplicative reduction over a quadratic extension. This is because the $j$-invariant of $E$ has negative $p$-adic valuation, and thus $E$ becomes isomorphic to a \emph{Tate curve} upon taking a base change to a quadratic extension by \cite[Theorem 5.3, Corollary 5.4]{silverman1994advanced}. See also \cite[Section 5.6 (b)]{Serre1971/72}. 
\subsection{The Birch--Swinnerton-Dyer conjecture over number fields}\label{ssec: BSD}
The Birch and Swinnerton-Dyer conjecture for elliptic curves over a number field $K$ provides an expression for the leading term of the $L$-function $L(E/K, s)$ at $s=1$ in terms of arithmetic data of $E/K$, which we recall below.
\begin{conjecture}\label{conj:BSD}
    Let $K$ be a number field. Then
\begin{itemize}
    \item $\ord_{s=1} L(E/K,s) = \textup{rank}(E/K)$, 
    \item the Tate--Shafarevich group of $E/K$, denoted by $\Sha(E/K)$ is finite and
    \item the leading term of the Taylor series at $s\!=\!1$ of the $L$-function $L(E/K, s)$ is given by
\[
 \frac{L^{(r)}(E/K,s)}{\Omega_{E/K}}=\frac{\textup{Reg}({E/K})|\Sha{(E/K)}| \mathrm{Tam}({E/K})}{\sqrt{|\Delta_K|}|E(K)_{\textup{tors}}|^2},
 \tag{$\dagger$}\label{bsd1}
\]
\end{itemize}
where $r$ is the order of vanishing of $L(E/K, s)$ at $s=1$, $\Delta_K$ is the discriminant of $K$, $\textup{Reg}$ denotes the regulator and $\mathrm{Tam}({E/K})$ is the product of Tamagawa numbers at finite places.
\vspace{3pt}\\
Here, $\Omega_{E/K} \in \CC^\times$ is a `period' of $E$ which has a precise description in terms of differentials on $E(K)$ and its completions (see Definition~\ref{defn: period} below). We will refer to the expression on the right-hand side of \eqref{bsd1} as $\textup{BSD}(E/K)$. 
\end{conjecture}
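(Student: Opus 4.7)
The final statement is the full Birch and Swinnerton-Dyer conjecture for an elliptic curve over an arbitrary number field, which is a Millennium Prize Problem and open in essentially every case of interest. I therefore cannot offer an honest proof plan; the most useful thing to do is to describe the known partial results and to point out that the paper itself only needs a much weaker variant.

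The three assertions --- that $\ord_{s=1}L(E/K,s)$ equals the Mordell--Weil rank, that $\Sha(E/K)$ is finite, and that the leading Taylor coefficient obeys the formula \eqref{bsd1} --- are customarily attacked by quite different tools. Over $\QQ$, in analytic rank $0$ or $1$, Kolyvagin's Euler system of Heegner points, combined with the Gross--Zagier formula and the non-vanishing results of Bump--Friedberg--Hoffstein and Murty--Murty, establishes the rank equality and the finiteness of $\Sha$. The $p$-part of the leading-term formula is then accessible for many primes through Iwasawa-theoretic work of Kato, Skinner--Urban, Wan, Castella, Jetchev--Skinner--Wan and others, at least for primes of good reduction outside a controlled bad set. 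In analytic rank $\geq 2$ essentially nothing is known, and over general number fields even the meromorphic continuation of $L(E/K,s)$ is out of reach outside known automorphy cases.

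A serious attack on the statement as written would therefore demand simultaneous breakthroughs: modularity (or potential automorphy) sufficient to make sense of the analytic side at $s=1$; an Euler-system input, or a suitable replacement, strong enough to control the Selmer group in higher rank; and a mechanism to pin down the leading constant exactly, rather than only up to rational or $p$-adic units. Each of these is itself an open problem, and this is the real obstacle: no currently available technique promotes the known partial results into an unconditional leading-term formula in the generality claimed here.

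For the purposes of the present article this, fortunately, is not an obstacle that needs to be overcome. The authors state Conjecture~\ref{conj:BSD} purely for reference; when the conjecture is actually invoked in the proof of Theorem~\ref{thmB}, it is replaced by the $p$-adic variant labelled Conjecture~\ref{conj:pBSD} in the introduction, which asserts \eqref{bsd1} only up to $p$-adic units and only along the cyclotomic tower $k_n/\QQ$. That weakened form is what the control-theoretic analysis of $\Sha$ and of the Selmer group developed in \S\ref{sec:prelim} is designed to exploit, and it is all that is needed for the applications in this paper; the full statement is recorded here solely to fix notation and terminology.
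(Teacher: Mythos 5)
You are right that there is nothing to prove here: the statement is the Birch and Swinnerton-Dyer conjecture itself, which the paper records as Conjecture~\ref{conj:BSD} without proof and only ever invokes as a hypothesis (in the form of the $p$-part, Conjecture~\ref{conj:pBSD}, for the results along the cyclotomic tower). Your assessment of the state of the art and of the role the conjecture plays in the paper matches the paper's own treatment.
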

For our purposes, we will utilize the "$p$-part" of Conjecture~\ref{conj:BSD}.

\begin{conjecture}\label{conj:pBSD}
    Let $K$ be a number field. Then
\begin{itemize}
    \item $\ord_{s=1} L(E/K,s) = \textup{rank}(E/K)$, 
    \item the $p$-primary part of the Tate--Shafarevich group, $\Sha(E/K)[p^\infty]$, is finite and
    \item the leading term of the Taylor series at $s\!=\!1$ of the $L$-function $L(E/K, s)$ satisfies
\[
 \ord_p\left(\frac{L^{(r)}(E/K,s)}{\Omega_{E/K}}\right)=\ord_p\left(\frac{\textup{Reg}({E/K})|\Sha{(E/K)[p^\infty]}| \mathrm{Tam}({E/K})}{\sqrt{|\Delta_K|}|E(K)_{\textup{tors}}|^2}\right),
 \tag{$\dagger$}\label{bsdp}
\]
\end{itemize}
where we the notation is the same as Conjecture \ref{conj:BSD}.
\end{conjecture}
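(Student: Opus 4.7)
Since Conjecture \ref{conj:pBSD} is a $p$-primary weakening of Conjecture \ref{conj:BSD}, the most direct route is to deduce it formally from the full Birch--Swinnerton-Dyer conjecture: the rank equality is literally the same, finiteness of $\Sha(E/K)[p^\infty]$ follows from finiteness of $\Sha(E/K)$, and \eqref{bsdp} is obtained from \eqref{bsd1} by applying $\ord_p$ to both sides --- noting that $|\Sha(E/K)|/|\Sha(E/K)[p^\infty]|$ is an integer coprime to $p$, so that $\ord_p(|\Sha(E/K)|)=\ord_p(|\Sha(E/K)[p^\infty]|)$. The remaining invariants on the right-hand side (the regulator, the Tamagawa product $C_{E/K}$, the torsion order $|E(K)_{\textup{tors}}|$, and $\sqrt{|\Delta_K|}$) are algebraic under Conjecture \ref{conj:BSD}, so $\ord_p$ makes sense via the fixed embedding $\overline{\QQ}\hookrightarrow \overline{\QQ}_p$. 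In this reading the statement has no content beyond that of Conjecture \ref{conj:BSD}.

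If instead one seeks a proof that bypasses full BSD, the canonical strategy is via Iwasawa theory. First, one would establish a Main Conjecture identifying (up to units in $\Lambda$) a suitable $p$-adic $L$-function --- or, when such an object is unavailable as in the additive, potentially good ordinary setting, a $\Lambda$-adic object built from Mazur--Tate elements or a Kato-type Euler system --- with the characteristic ideal of the dual Selmer group $\cX(E/K_\infty)$ over the cyclotomic $\Zp$-extension $K_\infty/K$. Next, one would invoke a control theorem (analogous to Theorem \ref{thm:control}) to descend from the $\Lambda$-module identity to the base level $K$, matching $\ord_p(L(E/K,1)/\Omega_{E/K})$ with the order of $\Sha(E/K)[p^\infty]$, local Tamagawa factors, and the torsion contribution. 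Finally, one would track the period normalization that converts the complex $L$-value into the algebraic side of the Main Conjecture. In favourable settings (good ordinary reduction, irreducible residual representation) this strategy has been executed by Kato, Skinner--Urban, Wan, and others.

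The main obstacle is, in both approaches, the underlying deep conjecture: full BSD in the first, and the Iwasawa Main Conjecture (together with finiteness of $\Sha[p^\infty]$ and a workable $p$-adic $L$-function) in the second. In the additive, potentially good ordinary setting relevant to Theorem \ref{thm: bsd}, neither is available off the shelf --- in particular, the very construction of a $p$-adic $L$-function is delicate. This is precisely why the authors adopt Conjecture \ref{conj:pBSD} over each $k_n$ as a \emph{hypothesis} rather than a theorem, and use it as the dictionary that translates the interpolation formula for $\theta_{n,0}(E)$ into arithmetic data --- Selmer growth, Tamagawa numbers, and the period ratio $\Omega_{E/k_n}/(\Omega_E^+)^{p^n}$ --- from which the desired $\lambda$-invariant formula is extracted.
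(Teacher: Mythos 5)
The statement you were asked about is Conjecture \ref{conj:pBSD}, which the paper does not prove: it is simply the ``$p$-part'' of Conjecture \ref{conj:BSD}, recorded so that it can be assumed as a hypothesis over the fields $k_n$ in Theorem \ref{thm: bsd}. Your proposal reads the situation correctly --- the only available ``proof'' is the formal deduction from full BSD (your $\ord_p$ argument is fine), and your conclusion that the authors treat it as an assumption rather than a theorem matches the paper's treatment exactly.
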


\subsubsection{Periods in the Birch and Swinnerton-Dyer conjecture}
Let $K$ be a number field. Let $v$ be a non-archimedean place of $K$ and write $K_v$ for the completion of $K$ at $v$ with ring of integers $\mathcal{O}_v$, and choose a uniformizer $\pi_{K_v}$. Let $q_v$ be the cardinality of the residue field. Let $|\cdot|_v$ denote the unique normalized absolute value on $K_v$ with $|\pi_{K_v}|_v=\frac{1}{q_v}$.

Given an elliptic curve $E$ defined over $K$ (for our purposes, it is the base-change of $E/\QQ$), for each non-archimedean place $v$ of $K$, we can find a \emph{minimal} Weierstrass equation for $E$. Consequently, there is an associated discriminant $\Delta_v$ and a (minimal) invariant differential $\omega_v^{\min}$. When the class number of $K$ is 1, there exists a global minimal Weierstrass equation (i.e., minimal for the base-change of $E$ to $K_v$ for all non-archimedean places $v$ of $K$); see \cite[\S VIII.8]{Si}. This does not hold for general number fields. We discuss the factor in Conjecture \ref{conj:BSD} that encapsulates this phenomenon. 

The set of local points $E(K_v)$ admits a structure of a $K_v$-analytic manifold of dimension 1. For an open subset $U\subset E(K_v)$, an open subset $V \subset K_v$ and a chart $\beta:U \to V$, $\omega_v^{\min}$ is of the form $f(z)dz$ on $V$, where $dz$ is the usual differential on $K$ and $f$ is a Laurent power series in $z$ without poles in $V$. We define 
    \[\int_{U}|\omega_v^{\min}|_v := \int_V |f(z)|_v d\mu,\]
    where $\mu$ is the Haar measure on $K_v$ normalized so that $\mathcal{O}_v$ has volume $1$. The integral over $E(K_v)$ is defined by gluing these charts. The following relates the Tamagawa number with the integral over $E(K_v)$.   
\begin{lemma}
Denote the \emph{Tamagawa number} at $v$ by $c(E/K_v)$. We have
    \[\int_{E(K_v)}|\omega_v^{\min}|_v= c(E/K_v)\cdot{L_v(E, q_v^{-1})}.\] 
\end{lemma}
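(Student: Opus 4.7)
The plan is to use the standard filtration $E(K_v) \supset E^0(K_v) \supset E^1(K_v)$, where $E^0(K_v)$ is the subgroup of points with nonsingular reduction and $E^1(K_v) = \ker(E^0(K_v) \to \tilde{E}^{ns}(k_v))$ is the kernel of reduction. Since $\omega_v^{\min}$ is an invariant differential, $|\omega_v^{\min}|_v$ defines a translation-invariant measure on the $K_v$-analytic manifold $E(K_v)$, so the integral over any subgroup is constant across its cosets.

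First, I would apply translation invariance together with the definition $c(E/K_v) = [E(K_v):E^0(K_v)]$ to write
\[
\int_{E(K_v)}|\omega_v^{\min}|_v = c(E/K_v)\cdot \int_{E^0(K_v)}|\omega_v^{\min}|_v.
\]
Next, using the exact sequence $0 \to E^1(K_v) \to E^0(K_v) \to \tilde{E}^{ns}(k_v) \to 0$ and again translation invariance, I would decompose
\[
\int_{E^0(K_v)}|\omega_v^{\min}|_v = |\tilde{E}^{ns}(k_v)|\cdot \int_{E^1(K_v)}|\omega_v^{\min}|_v.
\]

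For the formal group piece, I would identify $E^1(K_v)$ with $\hat{E}(\pi_{K_v}\mathcal{O}_v)$ via a uniformizer $t$ at the origin coming from the minimal Weierstrass equation. The minimality of $\omega_v^{\min}$ is precisely what ensures that, under this chart, $\omega_v^{\min}$ pulls back to $(1+\sum_{i\ge 1}a_i t^i)\,dt$ with $a_i\in\mathcal{O}_v$ and unit leading coefficient. Therefore $|\omega_v^{\min}|_v$ becomes the Haar measure $d\mu$ on $\pi_{K_v}\mathcal{O}_v$, giving $\int_{E^1(K_v)}|\omega_v^{\min}|_v = \mu(\pi_{K_v}\mathcal{O}_v) = q_v^{-1}$.

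Combining the three pieces yields $\int_{E(K_v)}|\omega_v^{\min}|_v = c(E/K_v)\cdot |\tilde{E}^{ns}(k_v)|\cdot q_v^{-1}$, so it remains to verify $|\tilde{E}^{ns}(k_v)|/q_v = L_v(E,q_v^{-1})$. This is a case check using the four reduction types: good ($L_v(E,T)=1-a_vT+q_vT^2$, with $|\tilde{E}(k_v)|=q_v+1-a_v$); split multiplicative ($L_v(E,T)=1-T$, with $\tilde{E}^{ns}\cong\mathbb{G}_m$); non-split multiplicative ($L_v(E,T)=1+T$, with $\tilde{E}^{ns}$ a non-split torus of order $q_v+1$); and additive ($L_v(E,T)=1$, with $\tilde{E}^{ns}\cong\mathbb{G}_a$ of order $q_v$). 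Each case is a one-line numerical verification. The only genuinely delicate point is the formal-group integral: one must be careful that minimality of the model, not merely invariance of the differential, is what pins down the normalization $q_v^{-1}$, since an arbitrary invariant differential would introduce an extra factor $|\pi_{K_v}|_v^{\ord_v(\omega/\omega_v^{\min})}$.
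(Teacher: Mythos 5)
Your proof is correct. The paper itself offers no argument here, merely citing Morgan's Lemma 1.5, and your decomposition via the filtration $E(K_v) \supset E^0(K_v) \supset E^1(K_v)$, the formal-group computation $\int_{E^1(K_v)}|\omega_v^{\min}|_v = q_v^{-1}$, and the four-case verification that $|\tilde{E}^{\mathrm{ns}}(k_v)|/q_v = L_v(E,q_v^{-1})$ (with $L_v$ the polynomial rather than its reciprocal, the convention in force here) is exactly the standard argument behind that cited lemma.
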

\begin{proof}
    See \cite[Lemma 1.5]{AdamMorgan}. 
\end{proof}
If $\omega$ is a non-zero global differential on $E$, there exists $\lambda \in K_v$ such that  $\omega= \lambda \omega_v^{\min}$ and 
\[\int_{E(K_v)}|\omega|=|\lambda|_v\frac{c(E/K_v)|\tilde{E}_{ns}(k)|}{q}= \left|\frac{\omega}{\omega_v^{\min}}\right|_v c(E/K_v)\cdot L_v(E, q_v^{-1}).\]

We now give the definition of the period occurring in \eqref{bsd1}.
\begin{defn}\label{defn: period}
    For a global differential $\omega$ for $E$ over a number field $K$, we define
\begin{align*}
    \Omega_{E/\CC, \omega}&\colonequals2\int_{E(\CC)}\omega \wedge \overline{\omega},\\
    \Omega_{E/\mathbb{R}}&\colonequals\int_{E(\mathbb{R})}|\omega|,\\
    \Omega^{*}_{E/\mathbb{R}}&\colonequals\frac{\Omega_{E/\CC, \omega}}{\Omega_{E/\mathbb{R}, \omega}^2}.
\end{align*}
We define the \textbf{global period}
\[\Omega_{E/K}=\prod_{v\nmid\infty}\left|\frac{\omega}{\omega_v^{\min}}\right|_v\cdot\prod_{v \mid \infty}\Omega_{E/K_v, \omega}.\]
\end{defn}

\begin{remark}
    For $K=\QQ$, the global minimal differential $\omega$ is also $\omega_v^{\min}$ for all primes $v$. Thus,  
    \[\Omega_{E/\QQ}=\Omega_{E/\mathbb{R}},\]
    which is the usual (real) Néron period for $E$.
\end{remark}
\begin{lemma}\label{dok}
    Let $E$ be an elliptic curve defined over a number field $K$. Let $F/K$ be a finite extension. Then 
    \[\Omega_{E/F}= \Omega_{E/K}^{[F:K]}\prod_{v  \textup{ real}}(\Omega^*_{A/K_v})^{\#\{w\mid v \textup{ complex}\}}\prod_{v, w\mid v} \left|\frac{\omega_v^{\min}}{\omega_w^{\min}}\right|_{w},\]
    where $v$ runs over places of $K$ and $w$ over places of $F$ above $v$.
\end{lemma}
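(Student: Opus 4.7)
The plan is a direct unpacking of the definition of $\Omega_{E/F}$, with contributions at each place $w$ of $F$ grouped according to the place $v$ of $K$ below $w$. Fix a global differential $\omega$ on $E/K$, so that
\[
\Omega_{E/F} = \prod_{w \nmid \infty} \left|\frac{\omega}{\omega_w^{\min}}\right|_w \cdot \prod_{w \mid \infty} \Omega_{E/F_w, \omega}.
\]
I will handle the non-archimedean and archimedean factors separately and then reassemble.

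At each non-archimedean $w \mid v$, I split
\[
\left|\frac{\omega}{\omega_w^{\min}}\right|_w = \left|\frac{\omega}{\omega_v^{\min}}\right|_w \cdot \left|\frac{\omega_v^{\min}}{\omega_w^{\min}}\right|_w
\]
and note that $\omega/\omega_v^{\min} \in K_v^\times$, so the normalised absolute values satisfy $|x|_w = |x|_v^{[F_w:K_v]}$. Using the degree identity $\sum_{w \mid v}[F_w:K_v] = [F:K]$, taking the product over $w \mid v$ reassembles the first factor into $|\omega/\omega_v^{\min}|_v^{[F:K]}$, while the collection of all second factors is exactly the last product in the statement.

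For the archimedean contribution I would distinguish cases. If $v$ is complex then $K_v = \CC$, every $w \mid v$ satisfies $F_w = \CC$ with $[F_w:K_v]=1$, and there are exactly $[F:K]$ such places, each contributing $\Omega_{E/K_v,\omega}$ and yielding $\Omega_{E/K_v,\omega}^{[F:K]}$. If $v$ is real, let $r_v$ and $c_v$ count the real and complex $w \mid v$, so $r_v + 2c_v = [F:K]$. Real $w$ each contribute $\Omega_{E/\mathbb{R},\omega}$ and complex $w$ each contribute $\Omega_{E/\CC,\omega} = \Omega^*_{E/K_v} \cdot \Omega_{E/\mathbb{R},\omega}^{2}$ by the definition of $\Omega^*_{E/\mathbb{R}}$, so the product telescopes to $\Omega_{E/K_v,\omega}^{[F:K]} \cdot (\Omega^*_{E/K_v})^{c_v}$.

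Reassembling the pieces, the $[F:K]$-th powers at every place $v$ of $K$ recombine via the definition of $\Omega_{E/K}$ into $\Omega_{E/K}^{[F:K]}$, leaving the two correction products $\prod_{v \text{ real}}(\Omega^*_{E/K_v})^{\#\{w \mid v \text{ complex}\}}$ and $\prod_{v, w \mid v}|\omega_v^{\min}/\omega_w^{\min}|_w$, which is the claim. The step requiring the most care is the real/complex bookkeeping at archimedean places together with the identity $|x|_w = |x|_v^{[F_w:K_v]}$ for $x \in K_v^\times$ in the chosen normalisation; beyond that, the argument is a straightforward reorganisation of the defining product and presents no genuine obstacle.
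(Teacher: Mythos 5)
Your argument is correct. Note, though, that the paper does not prove this lemma at all: it simply cites Dokchitser--Dokchitser (Lemma 2.4 of their paper on elliptic curve periods, where the result is stated for abelian varieties --- which is why the statement here contains the stray $\Omega^*_{A/K_v}$), so your proposal supplies the computation that the paper outsources. Your route --- fixing one $K$-rational differential $\omega$, splitting $\bigl|\omega/\omega_w^{\min}\bigr|_w = \bigl|\omega/\omega_v^{\min}\bigr|_w\cdot\bigl|\omega_v^{\min}/\omega_w^{\min}\bigr|_w$ at finite places, using $|x|_w=|x|_v^{[F_w:K_v]}$ for $x\in K_v^\times$ together with $\sum_{w\mid v}[F_w:K_v]=[F:K]$, and doing the real/complex bookkeeping via $\Omega_{E/\CC,\omega}=\Omega^*_{E/K_v}\,\Omega_{E/\mathbb{R},\omega}^2$ at archimedean places --- is exactly the mechanism behind the cited lemma, so in substance you are reconstructing their proof rather than finding a new one. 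The one point worth making explicit is that $\Omega_{E/F}$ (and $\Omega_{E/K}$) is independent of the choice of global differential: if $\omega'=\lambda\omega$ with $\lambda\in F^\times$, the finite factors scale by $\prod_{w\nmid\infty}|\lambda|_w$ and the archimedean ones by $\prod_{w\mid\infty}|\lambda|_w$ (each real place contributing $|\lambda|_w$ and each complex place $|\lambda|_w=|\lambda|^2$), so the total scaling is $1$ by the product formula. This is what licenses computing $\Omega_{E/F}$ with the $K$-rational $\omega$ and still recognising $\Omega_{E/K}^{[F:K]}$ on the other side; with that remark added, your proof is complete and self-contained, which is arguably a small gain over the bare citation in the paper.
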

\begin{proof}
    This is \cite[Lemma 2.4]{Dokchitser_Dokchitser_2015}. 
\end{proof}
We see that for $F=k_n$ (which is a totally real field) and $K=\QQ$, we have 
\begin{equation}\label{perratio}
    \Omega_{E/k_n}= \Omega_{E/\QQ}^{p^n} \prod_{v, w\mid v} \left|\frac{\omega_v^{\min}}{\omega_w^{\min}}\right|_{w},
\end{equation}
where $v$ runs over all places of $\QQ$ and $w$ over places of $k_n$ above $v$. 
We conclude with the following explicit description of the terms appearing in the product in \eqref{perratio}.
\begin{proposition}\label{fudge}
Let $E/K$ be an elliptic curve over a number field, $F/K$ a field extension of
finite degree $d$. Let $v$ be a finite place of $K$ with $w\mid v$ a place of $F$ lying above above it. Let $\omega_v^{\min}$ and $\omega_w^{\min}$ be the minimal differentials for $E/K_v$ and $E/F_w$, respectively.
    \begin{enumerate}
        \item If $E/K_v$ has good or multiplicative reduction, then $\displaystyle\left|\frac{\omega_v^{\min}}{\omega_w^{\min}}\right|_{w}=1$.
        \item  If $E/K_v$ has potentially good reduction and the residue characteristic is not $2$ or $3$, then $\displaystyle\left|\frac{\omega_v^{\min}}{\omega_w^{\min}}\right|_{w}= q^{\left\lfloor e_{F/K} \ord_v(\Delta_{\min, v})/12\right\rfloor}$, where $q$ is the size of the residue field at $w$, and $e_{F/K}$ is the ramification index of $F_w/K_v$ .
    \end{enumerate}
\end{proposition}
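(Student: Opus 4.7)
The plan is to study how the minimal Weierstrass model of $E/K_v$ transforms under base change to $F_w$. Fix a minimal Weierstrass equation for $E/K_v$ with invariant differential $\omega_v^{\min}$ and discriminant of $v$-valuation $d\colonequals \ord_v(\Delta_{\min,v})$. Any admissible substitution $(x,y)\mapsto(u^2 x'+r,\,u^3 y'+s u^2 x'+t)$ with $u\in F_w^\times$ transforms the invariant differential as $\omega'=u\,\omega_v^{\min}$ and scales the discriminant to $\Delta'=u^{-12}\Delta_v$, so that $w(\Delta')=e_{F/K}\,d-12\,w(u)$. My strategy in both parts is to choose $u$ so that $w(\Delta')\in[0,12)$, making the new equation a minimal model over $F_w$, and then read off the ratio from the fact that $\omega_w^{\min}=u\,\omega_v^{\min}$ up to an element of $\mathcal{O}_{F_w}^{\times}$.

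For part (1), I would simply take $u=1$ and argue that the original model remains minimal over $F_w$. In the good-reduction case, $d=0$, so the discriminant remains a unit over $F_w$ and minimality is immediate. In the multiplicative-reduction case, the classical identity $d=-v(j_E)$ characterises minimality, and since $j_E$ is an absolute invariant of $E$, we still have $w(\Delta_v)=e_{F/K}d=-w(j_E)$ after base change, so the model remains minimal over $F_w$. In either situation $\omega_w^{\min}$ and $\omega_v^{\min}$ agree up to a $w$-unit, yielding a ratio of absolute value $1$.

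For part (2), set $n\colonequals \lfloor e_{F/K}\,d/12\rfloor$ and pick $u\in F_w$ with $w(u)=n$; apply the substitution $(x,y)\mapsto(u^2 x',\,u^3 y')$. The new discriminant has $w$-valuation $e_{F/K}\,d-12n\in[0,12)$, so the resulting model will be minimal over $F_w$ once integrality of the coefficients is verified. This integrality check is the key step. Because the residue characteristic is at least $5$, I would first complete the square and cube to reduce to short Weierstrass form $y^2=x^3+a_4 x+a_6$, and then invoke the standard tables from Tate's algorithm (cf.\ \cite[IV.9]{Si}) which give that the Kodaira type of a potentially good reduction curve at $p\geq 5$ is one of $\mathrm{II},\mathrm{III},\mathrm{IV},\mathrm{I}_0^{*},\mathrm{IV}^{*},\mathrm{III}^{*},\mathrm{II}^{*}$ (so that $d\in\{2,3,4,6,8,9,10\}$), with the uniform lower bounds $v(a_4)\geq\lceil d/3\rceil$ and $v(a_6)\geq\lceil d/2\rceil$. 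The elementary estimates $4n\leq e_{F/K}\,d/3\leq e_{F/K}\lceil d/3\rceil$ and $6n\leq e_{F/K}\,d/2\leq e_{F/K}\lceil d/2\rceil$ then yield the required integrality $e_{F/K}v(a_i)\geq in$ for $i=4,6$. Granting this, the transformed equation is minimal over $F_w$, so
\[
\left|\frac{\omega_v^{\min}}{\omega_w^{\min}}\right|_w = |u^{-1}|_w = q^{w(u)} = q^{\lfloor e_{F/K}\,\ord_v(\Delta_{\min,v})/12\rfloor}.
\]

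The hard part is the integrality verification in (2); the exclusion of residue characteristics $2$ and $3$ enters precisely because it is what allows one to first pass to short Weierstrass form, where the bounds on $v(a_4)$ and $v(a_6)$ are uniform across Kodaira types. In small residue characteristic, one would need to work with Tate's algorithm directly and contend with wild ramification phenomena, which is why the statement is restricted to $p\neq 2,3$. Alternatively, rather than executing this case analysis by hand, one can appeal to Kraus' study of the variation of minimal Weierstrass models under tamely ramified extensions, which packages exactly this computation.
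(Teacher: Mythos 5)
Your argument is correct, but it takes a different route from the paper: the paper does not prove Proposition \ref{fudge} at all, it simply quotes \cite[Lemma 36 (5), (6)]{DokchitserEvansWiersema+2021+199+230}, whereas you give a self-contained computation with Weierstrass models. Your key steps all check out: completing the square and cube is a $u=1$ substitution (legitimate since the residue characteristic is at least $5$), so it preserves integrality, the discriminant and the differential; the bounds $v(a_4)\geq\lceil d/3\rceil$ and $v(a_6)\geq\lceil d/2\rceil$ do hold (and in fact need no appeal to Tate's tables: potentially good reduction gives $3v(a_4)=3v(c_4)\geq v(\Delta)=d$, and then $27a_6^2=-\Delta/16-4a_4^3$ forces $2v(a_6)\geq d$); an integral model with discriminant valuation in $[0,12)$ is automatically minimal, since the valuation of the discriminant of any integral model differs from the minimal one by a nonnegative multiple of $12$; and in the multiplicative case the base-changed model stays minimal because multiplicative reduction persists and its minimal discriminant has valuation exactly $-w(j)=e_{F/K}\,\ord_v(\Delta_{\min,v})$. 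The trade-off is the usual one: the citation route is shorter and also covers the statements of the cited lemma in greater generality, while your direct proof makes transparent exactly where the hypothesis on the residue characteristic enters (passage to short Weierstrass form and tameness) and where the floor $\lfloor e_{F/K}\,d/12\rfloor$ comes from.
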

\begin{proof}
    This is proved in \cite[Lemma 36 (5), (6)]{DokchitserEvansWiersema+2021+199+230}.
\end{proof}
\subsection{Iwasawa theory at potentially good, ordinary primes}
In this subsection, $K$ denotes a number field. Let $\overline{K}$ be an algebraic closure of $K$ and for any place $v$ of $K$, let $K_v$ denote the completion at $v$. Let $H^1(K, A)$ denote the cohomology group $H^1(\Gal(\overline{K}/K),A)$ for any $\Gal(\overline{K}/K)$-modules $A$. Similarly, let $H^1(L/K, A)$ denote $H^1(\Gal(L/K),A)$.
We define the $n$-Selmer group of $E/K$ as
\[\Sel_n(E/K) \colonequals \text{ker}\left(H^1(K, E[n])\to  \prod_v \frac{H^1(K_v, E[n])}{\text{im}(\kappa_v)}\right),\]
where $\kappa_v:E(K_v)/nE(K_v) \to H^1(K_v, E[n])$ is the Kummer map. Let
\[\mathcal{G}_E(K) \colonequals \text{im}\left(H^1(K,E[n]) \to \prod_v \frac{H^1(K_v, E[n])}{\text{im}(\kappa_v)}\right)\] where $v$ runs over all places of $K$. We have the following exact sequence 
\[0 \xrightarrow{} \text{Sel}_n(E/K) \xrightarrow{} H^1(K,E[n]) \xrightarrow{} {\mathcal{G}_E(K)} \xrightarrow{} 0.  \]
We begin with a lemma regarding Selmer groups over finite Galois extensions.   
\begin{lemma}\label{lem: sel1}
  Let $F/K$ be a finite Galois extension of degree $d$ such that $(n,d)=1$. Then 
  \[\Sel_n(E/K) \cong \Sel_n(E/F)^{\Gal(F/K)}.\]
\end{lemma}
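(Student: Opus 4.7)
The plan is the standard inflation--restriction argument, together with a diagram chase to match up the local Kummer conditions. Set $G = \Gal(F/K)$, so $|G| = d$ with $(n,d)=1$.

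First, I will show that restriction gives an isomorphism of global cohomology
\[
\res_{F/K} \colon H^1(K, E[n]) \xrightarrow{\;\sim\;} H^1(F, E[n])^{G}.
\]
The inflation--restriction five-term exact sequence reads
\[
0 \to H^1(G, E[n]^{G_F}) \to H^1(K, E[n]) \to H^1(F, E[n])^{G} \to H^2(G, E[n]^{G_F}).
\]
Since the modules $H^i(G, E[n]^{G_F})$ are simultaneously annihilated by $n$ (because $E[n]$ is $n$-torsion) and by $d = |G|$ (standard fact for group cohomology in positive degree), and $\gcd(n,d)=1$, both outer terms vanish, proving the claim.

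Next I will verify that the isomorphism restricts to the prescribed Selmer conditions. Certainly restriction sends $\Sel_n(E/K)$ into $\Sel_n(E/F)^G$ (since restriction commutes with the local Kummer maps and with the $G$-action). For the converse, suppose $\eta \in H^1(K, E[n])$ restricts to some $\xi \in \Sel_n(E/F)^G$; I must show $\eta$ itself is in $\Sel_n(E/K)$, i.e.\ for each place $v$ of $K$, the local image $\eta_v \in H^1(K_v, E[n])$ lies in the image of the Kummer map $\kappa_v$. Choose any place $w$ of $F$ above $v$ with decomposition group $D_w \le G$; then $|D_w|$ divides $d$ and is coprime to $n$. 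Consider the commutative square
\[
\begin{array}{ccc}
H^1(K_v, E[n]) & \longrightarrow & H^1(F_w, E[n]) \\
\downarrow & & \downarrow \\
H^1(K_v, E)[n] & \longrightarrow & H^1(F_w, E)[n]
\end{array}
\]
where the vertical arrows are the connecting maps in the Kummer sequences and the horizontal arrows are restriction. Being in $\Sel_n$ locally is equivalent to dying under the vertical map. By hypothesis, $\xi_w$ maps to $0$ in $H^1(F_w, E)[n]$, so the image $\bar\eta_v \in H^1(K_v, E)[n]$ of $\eta_v$ maps to $0$ in $H^1(F_w, E)$. The kernel of restriction $H^1(K_v, E) \to H^1(F_w, E)$ is a subquotient of $H^1(D_w, E(F_w))$, hence annihilated by $|D_w|$. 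Thus $\bar\eta_v$ is annihilated by both $n$ and $|D_w|$, and since these are coprime, $\bar\eta_v = 0$. Therefore $\eta_v \in \Im(\kappa_v)$, as required.

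Combining these two steps shows that restriction induces the claimed isomorphism $\Sel_n(E/K) \cong \Sel_n(E/F)^{G}$. I do not anticipate a real obstacle here: every cohomology group that appears is killed by either $n$ or $d$, and the coprimality hypothesis forces the relevant inflation/restriction kernels and cokernels to vanish. The only point requiring any care is the local diagram chase used for surjectivity onto the Selmer subspace, which is handled above via the Kummer sequences over $K_v$ and $F_w$.
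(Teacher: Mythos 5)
Your proposal is correct and follows essentially the same route as the paper: both use the inflation--restriction sequence with the coprimality of $n$ and $d$ to identify $H^1(K,E[n])$ with $H^1(F,E[n])^{G}$, and then show the local Selmer conditions match by proving that the kernel of the local restriction on $H^1(K_v,E)[n]$ (equivalently on $H^1(K_v,E[n])/\mathrm{im}(\kappa_v)$) is killed by both $n$ and the local degree, hence trivial. The only cosmetic difference is that the paper packages the second step as a snake-lemma diagram chase on the sequences defining $\Sel_n$ and $\mathcal{G}_E$, whereas you argue element-wise; the content is identical.
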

\begin{proof}
    Let $G := \Gal(F/K)$. The inflation-restriction exact sequence gives:
    \[0\to H^1(F/K, E(F)[n])\to H^1(K, E[n]) \to H^1(F, E[n])^G \to H^2(F/K, E(F)[n]).\]  
    The first and last terms of this exact sequence are finite groups that are annihilated by both $n$ and $d$. As $n$ and $d$ are coprime, both groups are trivial. Thus, the restriction map $\res: H^1(K, E[n]) \to H^1(F, E[n])^G$ is an isomorphism.
    
    We have the following commutative diagram with exact rows.
\[\begin{tikzcd}
	0 & {\text{Sel}_n(E/K)} && {H^1(K,E[n])} && {\mathcal{G}_E(K)} & 0 \\
	\\
	0 & {\text{Sel}_n(E/F)^G} && {H^1(F, E[n])^G} && {\mathcal{G}_E(F)^G}
	\arrow[from=1-1, to=1-2]
	\arrow[from=1-2, to=1-4]
	\arrow["s", from=1-2, to=3-2]
	\arrow[from=1-4, to=1-6]
	\arrow["\res", from=1-4, to=3-4]
	\arrow[from=1-6, to=1-7]
	\arrow["g", from=1-6, to=3-6]
	\arrow[from=3-1, to=3-2]
	\arrow[from=3-2, to=3-4]
	\arrow[from=3-4, to=3-6]
\end{tikzcd}\]
As $\res$ is an isomorphism, the snake lemma gives the following exact sequence:
\[0 \to \text{ker}(s) \to 0 \to \text{ker}(g) \to \text{coker}(s) \to 0.\]
We show that $\text{ker}(g)=0$ below.

For a prime $v$ of $K$, let $w\mid v$ be a prime of $F$ and consider the natural restriction map $r_v: {H^1(K_v, E[n])}/{\text{im}(\kappa_v)} \to {H^1(F_w, E[n])}/{\text{im}(\kappa_w)}$. Then $\text{ker}(g)= \mathcal{G}_E(K) \cap \text{ker}(\prod_v r_v)$, so it suffices to show $\text{ker}(r_v)=0$ for all $v$. The exact sequence
\[0 \to E(K_v)/nE(K_v) \to H^1(K_v, E[n]) \to H^1(K_v, E(\overline{K_v}))[n]\to 0 ,\]
implies that
\[\frac{H^1(K_v, E[n])}{\text{im}(\kappa_v)} \cong H^1(K_v, E(\overline{K_v}))[n].\]
Similarly, we have 
\[\frac{H^1(F_w, E[n])}{\text{im}(\kappa_w)} \cong H^1(F_w, E(\overline{F_w}))[n].\]
Thus, it suffices to show that the restriction map $r_{w,v}:H^1(K_v, E(\overline{K_v}))[n] \to H^1(F_w, E(\overline{F_w}))[n]$ is injective. As $\ker(r_{w,v})=H^1(F_w/K_v, E(F_w))[n]$, which is annihilated by $[F_w:K_v]$ and $n$, it follows that $\text{ker}(r_{w,v})=0$, as desired.
\end{proof}
We define the $p$-primary Selmer group
\[\text{Sel}_{p^\infty}(E/K) = \lim_{\longrightarrow}\text{Sel}_{p^k}(E/K).\]
For a finite Galois extension $F/K$ with degree co-prime to $p$, Lemma~\ref{lem: sel1} implies that
\[\text{Sel}_{p^\infty}(E/K)\cong \text{Sel}_{p^\infty}(E/F)^{\Gal(F/K)}.\]

For $E/\QQ$ with additive potentially good reduction at a prime $p$, we establish Mazur's control theorem for $p^\infty$-Selmer groups of $E$ along the $\Zp$-extension of $\QQ$.
\begin{theorem}\label{thm:control}
    Let $E/\QQ$ be an elliptic curve with additive potentially good ordinary reduction at $p\geq 5$. Then Mazur's control theorem holds for ${\Sel}_{p^\infty}(E/\QQ_\infty)$, i.e., the kernel and the cokernel of the restriction map
    \[{\Sel}_{p^\infty}(E/k_n) \to {\Sel}_{p^\infty}(E/\QQ_\infty)^{\Gamma_n}\]  are finite. Furthermore, their cardinalities are bounded independently of $n$.
\end{theorem}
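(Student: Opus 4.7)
The plan is to reduce to the classical Greenberg--Mazur control theorem by base-changing to an auxiliary number field over which $E$ acquires good ordinary reduction, and then to descend via Lemma~\ref{lem: sel1}.

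First, I would construct a suitable auxiliary field $F$. By Lemma~\ref{lem: Kgdeg}, the semistability defect $e\in\{2,3,4,6\}$ is the least integer such that $E$ acquires good reduction over $\Qp(\sqrt[e]{p})$, and by Lemma~\ref{ediv}, $e$ divides $p-1$. Take $F$ to be the splitting field over $\QQ$ of $x^{e}-p$, so that $F\supseteq \QQ(\sqrt[e]{p},\zeta_{e})$. Then $[F:\QQ]$ divides $e\cdot\varphi(e)\le 12$, and in particular is coprime to $p$ since $p\ge 5$; by construction $E/F$ has good ordinary reduction at every place above $p$. Because $\QQ_\infty/\QQ$ has $p$-power degree, $F\cap\QQ_\infty=\QQ$, so $F_\infty\colonequals F\cdot\QQ_\infty$ is the cyclotomic $\Zp$-extension of $F$ with layers $F_n\colonequals F\cdot k_n$, and $H\colonequals\Gal(F/\QQ)\cong\Gal(F_n/k_n)\cong\Gal(F_\infty/\QQ_\infty)$ has order coprime to $p$.

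Next, I would invoke the classical Greenberg--Mazur control theorem for $E/F$: since $E/F$ has good ordinary reduction at every place above $p$, the kernel and cokernel of
\[
r_{n}^{F}\colon\Sel_{p^\infty}(E/F_n)\longrightarrow \Sel_{p^\infty}(E/F_\infty)^{\Gal(F_\infty/F_n)}
\]
are finite with cardinality bounded independently of $n$. Finally, I would descend via $H$-invariants. Applying Lemma~\ref{lem: sel1} to each pair $(E[p^{k}],F_{n}/k_{n})$ and passing to direct limits in $k$ gives
\begin{align*}
\Sel_{p^\infty}(E/k_n)&\cong \Sel_{p^\infty}(E/F_n)^{H},\\
\Sel_{p^\infty}(E/\QQ_\infty)^{\Gamma_n}&\cong \bigl(\Sel_{p^\infty}(E/F_\infty)^{\Gal(F_\infty/F_n)}\bigr)^{H}.
\end{align*}
Since $|H|$ is coprime to $p$, the functor $(-)^{H}$ is exact on $p$-primary abelian groups, so applying it to the short exact sequences controlling $\ker r_{n}^{F}$ and $\coker r_{n}^{F}$ preserves both finiteness and the uniform bound. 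This transfers Greenberg's conclusion across the identifications above and yields the theorem for $r_n$.

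The main technical point will be justifying the second isomorphism, which requires an infinite-level version of Lemma~\ref{lem: sel1} for $F_\infty/\QQ_\infty$; I would deduce this from the finite-level lemma applied at each $E[p^{k}]$ and $k_n$, using that $|H|$ is coprime to $p$ to exchange the $H$-invariants with the direct limits. Once that is set up and Greenberg's good-ordinary control theorem is invoked over $F$, the argument reduces to formal diagram chasing, so the real work is the careful choice and verification of the auxiliary field $F$.
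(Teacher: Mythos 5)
Your proposal is correct and follows essentially the same route as the paper: base-change to a prime-to-$p$ Galois extension of $\QQ$ contained in $\QQ(\sqrt[e]{p},\mu_e)$ where $E$ becomes good ordinary, invoke Mazur's control theorem there, and descend by taking invariants under $\Gal(F/\QQ)$ via Lemma~\ref{lem: sel1}, using that invariants under a group of order prime to $p$ behave exactly on $p$-primary modules. The only cosmetic differences are your explicit choice of the splitting field of $x^e-p$ in place of the minimal field of good reduction $K_g$, and phrasing the descent via exactness of $(-)^H$ rather than vanishing of $H^1(G,\mathrm{Im}(r_{g,n}))$.
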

\begin{proof}
Let $K$ denote the minimal {Galois} extension of $\QQ$ over which $E$ achieves good reduction (note that $K\subseteq \QQ(\sqrt[e]{p},\mu_e)$, where $e\in\{2,3,4,6\}$). Let $K_\infty\colonequals K\QQ_\infty$. We have $\Gal(K_\infty/K)\cong \Gamma$. Denote $\Gal(K/\QQ)$ by $G$. Then, for $p\geq 5$, we have $(|G|, p) = 1$. If we write $K_n=(K_\infty)^{\Gamma_n}$, we have
\[G \cong \Gal(K_n/k_n)  \cong \Gal(K_\infty/\QQ_\infty),\quad n\gg0.\]

Lemma \ref{lem: sel1} gives
\[{\Sel}_{p^\infty}(E/\QQ_\infty)\cong \Sel_{p^\infty}(E/K_\infty)^G,\]
and 
\[\text{Sel}_{p^\infty}(E/k_n)\cong \text{Sel}_{p^\infty}(E/K_n)^G\]
when $n$ is sufficiently large.
As $E$ has good ordinary reduction at the primes of $K$ lying above $p$, Mazur's control theorem along the $\Zp$-extension $K_\infty/K$ in \cite{Mazur1972} tells us that the kernel and cokernel of the restriction map
\[r'_{n}: \text{Sel}_{p^\infty}(E/K_n) \to \text{Sel}_{p^\infty}(E/K_\infty)^{\Gamma_n}\]
are finite and bounded independently of $n$.

Note that if $A$ is simultaneously a $G$-module and a $\Gamma_n$-module, we have \[(A^G)^{\Gamma_n} = (A^{\Gamma_n})^G
.\] 
Thus, the restriction map $r_n:\Sel_{p^\infty}(E/k_n)\rightarrow\Sel_{p^\infty}(E/\QQ_\infty)^{\Gamma_n} $ can be realized as
\begin{align*}
    \Sel_{p^\infty}(E/k_n)\cong\Sel_{p^\infty}(E/K_n)^G\stackrel{r'_{n}}\longrightarrow\left(\Sel_{p^\infty}(E/K_\infty)^{\Gamma_n}\right)^{G}\\
    =\left(\Sel_{p^\infty}(E/K_\infty)^G\right)^{\Gamma_n}\cong\Sel_{p^\infty}(E/\QQ_\infty)^{\Gamma_n}.
\end{align*}
It follows that $\ker (r_n)= \ker (r_{n}')^G$ and $\mathrm{im} (r_n)=\mathrm{im} (r_n')^G$.
Furthermore, as the order of $G$ is coprime to $p$ and $\mathrm{im}(r_{n}')$ is a $p$-group, we have $H^1(G,\mathrm{im}(r_{n}'))=0$. Taking $G$-cohomology of the short exact sequence
\[
0\rightarrow\mathrm{im}(r_{n}')\rightarrow \Sel(E/K_\infty)^{\Gamma_n}\rightarrow\coker(r_{n}')\rightarrow0
\]
gives $\coker(r_{n}')^G=\coker(r_n)$,
from which the theorem follows.
\end{proof}
Define the Pontryagin dual of $\Sel_{p^{\infty}}(E/\QQ_\infty)$ as
\[\cX(E/\QQ_\infty) \colonequals \textup{Hom}(\text{Sel}_{p^\infty}(E/\QQ_\infty), \QQ_p/\ZZ_p).\]
We define $\cX(E/K_\infty)$ similarly, where $K$ is as defined in the proof of Theorem~\ref{thm:control}. The following is a result of Delbourgo from \cite[Theorem (A)]{del-JNT}. 
\begin{theorem}\label{thm: delselmertorsion}
    Let $E$ be an elliptic curve defined over $\QQ$ with potentially good ordinary reduction at a prime $p\geq 5$. Then, $\cX(E/\QQ_\infty)$ is a torsion $\Lambda$-module.
\end{theorem}

The conclusion of Theorem~\ref{thm: delselmertorsion}, combined with the control theorem given in Theorem~\ref{thm:control}, implies that $\rank(E(k_n))$ is bounded above by the $\lambda$-invariant of $\cX(E/\QQ_\infty)$. Let $r_\infty=\displaystyle\lim_{n\rightarrow\infty}\rank(E(k_n))$. We have:
\begin{theorem}\label{sha}
    Assume that $E$ is an elliptic curve defined over $\QQ$ and that $E$ has potentially good ordinary reduction at $p \geq 5$. Furthermore, assume that $\Sha(E/\QQ_n)[p^\infty]$ is finite for all $n$. Then there exist integers $\lambda_E, \mu\geq 0$ and $\nu$ depending only on $E$ such that 
    \[|\Sha_E(\QQ_n)[p^\infty]|=p^{(\lambda_E- r_\infty)n + \mu p^n + \nu} \text{ for all } n\gg0.\]
\end{theorem}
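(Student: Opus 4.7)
The plan is to deduce the growth formula for $|\Sha(E/k_n)[p^\infty]|$ by combining the control theorem (Theorem~\ref{thm:control}), already established in the additive potentially good ordinary setting, with the classical Iwasawa-theoretic analysis of $\Gamma_n$-coinvariants of a torsion $\Lambda$-module. Setting $X\colonequals\cX(E/\QQ_\infty)$, the integers $\mu$ and $\lambda_E$ in the statement will be the $\mu$- and $\lambda$-invariants of $X$.

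First, Kummer theory for $E/k_n$ produces the exact sequence
\[
0 \to E(k_n)\otimes \Qp/\Zp \to \Sel_{p^\infty}(E/k_n) \to \Sha(E/k_n)[p^\infty] \to 0.
\]
Dualising and using that $E(\QQ_\infty)[p^\infty]$ is finite (so that $|E(k_n)_{\mathrm{tors}}[p^\infty]|$ is uniformly bounded in $n$) gives
\[
|\Sha(E/k_n)[p^\infty]|\cdot O(1)\;=\;|\cX(E/k_n)_{\mathrm{tors}}|
\qquad\text{and}\qquad \rank_{\Zp}\cX(E/k_n)=r_n,
\]
where $O(1)$ denotes a quantity bounded independently of $n$.

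Next, Theorem~\ref{thm:control} supplies a comparison map $X_{\Gamma_n}\to \cX(E/k_n)$ whose kernel and cokernel are of uniformly bounded order, so the problem reduces to a purely algebraic computation of $|X_{\Gamma_n,\mathrm{tors}}|$ and $\rank_{\Zp}X_{\Gamma_n}$. By Lemma~\ref{lem:cortorsion}, $X$ is $\Lambda$-torsion, hence the structure theorem yields a pseudo-isomorphism
\[
X \;\sim\; \bigoplus_i \Lambda/p^{a_i}\;\oplus\;\bigoplus_j \Lambda/g_j(T)^{b_j}
\]
with distinguished irreducible $g_j$, and Iwasawa invariants $\mu=\sum a_i$ and $\lambda_E=\sum b_j\deg g_j$. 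Evaluating $\omega_n=(1+T)^{p^n}-1$ at the roots of the $g_j$ shows that, for $n$ sufficiently large, $\rank_{\Zp}X_{\Gamma_n}$ stabilises to an integer $r$ and
\[
|X_{\Gamma_n,\mathrm{tors}}|\;=\;p^{\mu p^n + (\lambda_E-r)n + \nu'}
\]
for some constant $\nu'$ independent of $n$. Comparing $\Zp$-ranks across the control map, together with the hypothesis that $\Sha(E/k_n)[p^\infty]$ is finite (so $\cX(E/k_n)$ has $\Zp$-rank exactly $r_n$, which by control equals $r$ for $n\gg 0$), forces $r=r_\infty$. Bundling the bounded factors from Kummer theory, control, and pseudo-nullity into a single constant $\nu$ then gives the claimed formula.

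The main obstacle is keeping a clean accounting of the bounded error terms at each stage — the kernel and cokernel of the control map, the $p$-part of $E(k_n)_{\mathrm{tors}}$, and the pseudo-null discrepancy in the structure theorem — and verifying that each of these stays bounded as $n\to\infty$. Each item is individually standard in the good ordinary theory, but one must check they continue to hold in the potentially ordinary setting; the principal ingredient of that check is already in place via Theorem~\ref{thm:control}.
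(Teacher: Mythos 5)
Your proposal is correct and takes essentially the same route as the paper, whose proof simply cites \cite[proof of Theorem~1.10]{greenberg} and observes that the good-ordinary argument carries over once Theorem~\ref{thm:control} and Lemma~\ref{lem:cortorsion} are in place. Your plan merely fleshes out the standard computation of $\Gamma_n$-coinvariants of a torsion $\Lambda$-module that Greenberg's proof rests on, so the two are the same argument at different levels of detail.
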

\begin{proof}
    The argument for the good ordinary case as given in \cite[proof of Theorem~1.10]{greenberg} carries over under our hypotheses.
\end{proof}
\section{Formulae for $\lambda$-invariants at additive primes}\label{sec:form1}
\subsection{Potential semi-stable reduction over a quadratic extension}
We first focus on the case where  $E/\QQ$ is additive at $p$ and achieves good or multiplicative reduction over a quadratic extension, i.e., the case when the semistability defect $e$ is equal to $2$. Let $E^F$ be the quadratic twist of $E$ over $F\colonequals\QQ(\sqrt{(-1)^{p-1}p})$ as in \S~\ref{sec:intro}. We begin with the following proposition that can be obtained from an analysis of the discriminant, and the invariants $c_4$ and $c_6$ associated with the minimal Weierstrass equations for $E$ and $E^F$, respectively. 
\begin{proposition}
    Let $E$ be an elliptic curve defined over $\QQ$ with additive reduction at $p$ such that $e=2$. Then $E^F$ has semistable reduction at $p$. 
\end{proposition}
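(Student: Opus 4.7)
The plan is to identify the Kodaira type of $E$ at $p$ from the hypothesis $e=2$, track how the invariants $c_4, c_6, \Delta$ transform under the quadratic twist by $F$, and then apply the standard minimality criterion to read off the reduction type of $E^F$.

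Assuming $p \geq 5$, the formula \eqref{eq: semistabilitydef} forces $\gcd(12,\ord_p(\Delta_E))=6$ in the potentially good case, so Tate's algorithm pins the Kodaira type to $I_0^\ast$ with $\ord_p(\Delta_E)=6$. In the potentially multiplicative case (which automatically has $e=2$ by \S\ref{sec:potmult}), Tate's algorithm gives Kodaira type $I_n^\ast$ with $\ord_p(\Delta_E)=6+n$ for some $n\geq 1$. In both situations, a minimal Weierstrass model of $E$ at $p$ satisfies
\[
\ord_p(c_4^E)\geq 2,\qquad \ord_p(c_6^E)\geq 3,\qquad \ord_p(\Delta_E)=6+n,
\]
where $n=0$ precisely in the potentially good case, and where the bound $\ord_p(c_4^E)\geq 2$ is sharp whenever $n\geq 1$.

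Writing $F=\QQ(\sqrt{d})$ with $d=(-1)^{p-1}p$ (so $\ord_p(d)=1$) and using the quadratic-twist relations $c_4^{E^F}=d^2 c_4^E$, $c_6^{E^F}=d^3 c_6^E$, $\Delta_{E^F}=d^6\Delta_E$, one reads off
\[
\ord_p(c_4^{E^F})\geq 4,\qquad \ord_p(c_6^{E^F})\geq 6,\qquad \ord_p(\Delta_{E^F})=12+n.
\]
For $p\geq 5$ these inequalities are exactly the non-minimality threshold, so a standard change of variables produces a minimal model of $E^F$ whose $c_4$, $c_6$, $\Delta$ valuations drop by $4$, $6$, $12$ respectively.

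Finally, I would conclude by cases. In the potentially good case ($n=0$) the minimal twisted model has $\ord_p(\Delta_{E^F})=0$, so $E^F$ has good reduction at $p$. In the potentially multiplicative case ($n\geq 1$) the sharp equality $\ord_p(c_4^E)=2$ in type $I_n^\ast$ forces $\ord_p(c_4^{E^F})=0$ after minimization, while $\ord_p(\Delta_{E^F})=n\geq 1$, giving multiplicative (type $I_n$) reduction. Either way $E^F$ is semistable at $p$. The main point requiring care is extracting the sharp valuations of $c_4$ and $c_6$ for each Kodaira type from Tate's algorithm; once these are in hand, the rest is routine manipulation of Weierstrass invariants and the conclusion is uniform in the potential reduction type.
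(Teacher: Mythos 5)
Your proof is correct and follows exactly the approach the paper sketches: track the valuations of $c_4$, $c_6$, and $\Delta$ through the quadratic twist by $F$ and apply the minimality criterion to identify the reduction type of $E^F$. You restrict to $p\geq 5$ in order to invoke the clean minimality criterion and the formula \eqref{eq: semistabilitydef} for $e$; since the paper's own formula carries the same restriction, this is consistent with the paper's implicit setup, though for $p=3$ (allowed in Theorem~\ref{quad}) the $c_4/c_6/\Delta$ minimality analysis is more delicate and would need Kraus-type conditions.
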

Next, we recall the main theorem of \cite{pal}, which gives a relation between the Néron periods of $E$ and those of its quadratic twist, applied to the additive case. 
\begin{theorem}\label{thm: pal}
    Let $E^F$ denote the quadratic twist of $E$ over $F=\QQ(\sqrt{(-1)^{p-1}p})$, with $p$ odd. Assume that $E$ has additive reduction at $p$ but $E^F$ has semistable reduction at $p$. Then the periods of $E$ and $E^F$ are related as follows:
    If $p\equiv 1 \pmod{4}$, then
    \[\Omega^+_{E^F} = u_1\sqrt{p}\Omega^+_{E},\]
    and if $p\equiv 3 \pmod{4}$, then 
    \[\Omega^-_{E^F} = u_2 c_\infty(E^F)\sqrt{p}\Omega^+_{E},\]
    where $u_1,u_2$ are powers of $2$ and $c_\infty(E^F)$ is the number of connected components of $E^F(\mathbb{R})$.
\end{theorem}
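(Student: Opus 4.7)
The plan is to trace the twist through minimal Weierstrass equations at $p$, compare the resulting Néron differentials, and then integrate to recover the period relation. Let $d=\pm p$ denote the twist discriminant (with sign determined by $p\pmod 4$ so that $F=\QQ(\sqrt d)$). If $E$ has minimal Weierstrass equation $y^2=x^3+Ax+B$ at $p$, then $E^F$ has model $y^2=x^3+Ad^2x+Bd^3$, and the $F$-isomorphism $E^F\xrightarrow{\sim}E$ given by $(x,y)\mapsto(x/d,y/d^{3/2})$ identifies invariant differentials by $\omega_{E^F}^{\mathrm{twist}}=\omega_E^{\min}/\sqrt d$. The discriminant of this naive model of $E^F$ is $d^6\Delta_E^{\min}$.

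Next, I would pin down the Kodaira types at $p$. For $p\ge 5$, quadratic twist by the ramified character associated with $F$ permutes Kodaira symbols at $p$ by $II\leftrightarrow IV^*$, $III\leftrightarrow III^*$, $IV\leftrightarrow II^*$, and $I_n^*\leftrightarrow I_n$. Only the last permutation takes an additive type to a semistable one, so $E$ must be of type $I_n^*$ and $E^F$ of type $I_n$. Hence $\ord_p(\Delta_E^{\min})=n+6$ and $\ord_p(\Delta_{E^F}^{\min})=n$, and the naive twisted model has $p$-adic discriminant valuation $n+12$. Passage to the minimal model therefore requires a variable change $(x,y)\mapsto(u^2 x,u^3 y)$ with $\ord_p(u)=1$, i.e.\ $u=p\cdot(\text{unit})$, which scales the invariant differential by $u$. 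Combining,
\[
\omega_{E^F}^{\min}=\frac{u}{\sqrt d}\,\omega_E^{\min}\quad\text{up to a $p$-adic unit,}
\]
under the $F$-isomorphism. At primes $\ell\ne p$, twisting by $\pm p$ multiplies $A$ and $B$ by $\ell$-units, so minimal models at $\ell$ can differ only by an $\ell$-unit; at $\ell=2$ this can contribute genuine powers of $2$, which will be absorbed into the constants $u_1,u_2$.

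To obtain the periods, I would integrate the identification above over the appropriate real cycles. When $p\equiv 1\pmod 4$, $d=p>0$, $\sqrt d\in\mathbb R$, so $u/\sqrt d=\sqrt p\cdot(\text{unit})$ is real and the $F$-isomorphism preserves real structures; the real cycle on $E^F$ transports to the real cycle on $E$ and one reads off $\Omega_{E^F}^+=u_1\sqrt p\,\Omega_E^+$. When $p\equiv 3\pmod 4$, $d=-p<0$, $\sqrt d=i\sqrt p$, and $u/\sqrt d=\mp i\sqrt p\cdot(\text{unit})$ is purely imaginary. Multiplication by $i$ swaps the real and imaginary components of the period lattice, so under the $F$-isomorphism the real cycle on $E^F$ corresponds to an imaginary cycle on $E$, and conversely. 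Translating back to periods of $E^F/\QQ$, the \emph{imaginary} period $\Omega_{E^F}^-$ is what pairs with $\Omega_E^+$, yielding $\Omega_{E^F}^-=u_2\,c_\infty(E^F)\sqrt p\,\Omega_E^+$; the factor $c_\infty(E^F)$ appears because $\Omega^\pm$ is conventionally normalized by integrating only over the identity component of real points, and whether $E^F(\mathbb R)$ has one or two components determines whether this matches a generator of the imaginary period lattice or half of one.

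The main obstacle is the final step: keeping careful track of the action of complex conjugation on the period lattice under the $F$-isomorphism when $F$ is imaginary, and correctly extracting the factor $c_\infty(E^F)$ from the component structure of $E^F(\mathbb R)$. Everything else reduces to Tate's algorithm together with the explicit substitution formulae for the quadratic twist.
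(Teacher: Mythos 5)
Your strategy is essentially identical to that of Pal's paper, which is what the published proof cites: compare minimal Weierstrass models of $E$ and $E^F$ at $p$, identify the Néron differentials via the $F$-isomorphism, and integrate. The local-at-$p$ bookkeeping is sound: using the Kodaira type permutation $I_n^*\leftrightarrow I_n$ under the ramified quadratic twist (valid for $p\ge 5$), you correctly obtain $\omega^{\min}_{E^F}=\frac{u}{\sqrt d}\,\phi^*\omega^{\min}_E$ with $\ord_p(u)=1$, giving the factor $\sqrt p$ up to a $p$-adic unit. The $p\equiv 1\pmod 4$ case then falls out cleanly because $\phi$ is defined over $\mathbb R$ and identifies real loci.

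There is, however, a genuine gap in the $p\equiv 3\pmod 4$ case, and you have (rightly) flagged it as the main obstacle. The assertion that ``the real cycle on $E^F$ corresponds to an imaginary cycle on $E$'' needs to be made precise: $\phi$ is defined over $F\subset\CC$ but not over $\mathbb R$, and complex conjugation intertwines $\phi$ with $\phi$ post-composed with the twisting automorphism of $E$. One must pass to the period lattices $L_{E^F}$ and $L_E$, observe that the $\CC$-linear map $\phi^*$ sends $L_{E^F}$ onto $\frac{\sqrt d}{u}L_E$, and then compare the generators $\Omega^\pm$ of the real and imaginary parts of each lattice. The factor $c_\infty(E^F)$ does not come from a convention of integrating over the identity component alone (the paper's $\Omega^+_{E/\mathbb R}$ is the integral over all of $E(\mathbb R)$); rather, it arises because the shape of the lattice (rectangular vs.\ non-rectangular, equivalently $c_\infty=2$ vs.\ $c_\infty=1$) determines the index of the sublattice generated by the real and purely imaginary periods, and this index differs for $E$ and $E^F$ in general. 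Your heuristic gestures at the right phenomenon but would not withstand the needed careful lattice computation. A second, smaller issue: the claim that the only auxiliary scaling factors live at $\ell=2$ (and hence can be swept into $u_1,u_2$) should be justified -- in particular one must argue that no $\ell=3$ contribution appears when the global minimal model is not already in short Weierstrass form. Finally, the Kodaira permutation table you invoke is the standard one for $p\ge 5$; the theorem as stated allows $p=3$, where the analysis of Tate's algorithm and the inertia action is more delicate, so either the argument needs a separate $p=3$ treatment or the hypothesis should be strengthened.
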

\begin{proof}
The result \cite[Corollary 2.6]{pal} gives the relation for the potentially good case. For the potentially multiplicative case, see Prop. 2.4 of \textit{op. cit.} and consider the change in $p$-adic valuations of the invariants $\Delta_{E^F}$ and $c_4(E^F)$ upon twisting over $F$.
\end{proof}
In the forthcoming proofs, we relate the $\lambda(\theta_{n,i}(E))$ to  $\lambda(\theta_{n,i+(p-1)/2}(E^F))$ for even $i$. The latter is well-behaved for large $n$ since there exists a $p$-adic $L$-function for $E^F$. 
    \begin{theorem}\label{quad}
        Let $E/\QQ$ be an elliptic curve with additive reduction at an odd prime $p$. Let $i$ be an even integer between $0$ and $p-2$.
        Assume that 
        \begin{itemize}
        \item the quadratic twist $E^F$ has either good ordinary or multiplicative reduction at $p$ and 
        \item the $\mu$-invariant of $L_p(E^F,\omega^{(p-1)/2+i}, T)$ is zero.
        \end{itemize}
         Let $\lambda(E^F, \omega^{{(p-1)/2+i}})$ denote the $\lambda$-invariant of $L_p(E^F, \omega^{{(p-1)/2+i}}, T)$. 
    Then, for $n$ sufficiently large, 
    \begin{align*}
    \mu(\theta_{n,i}(E)) &= 0, \\ 
    \lambda(\theta_{n,i}(E))&= \frac{(p-1)}{2}\cdot{p^{n-1}} 
 + \lambda(E^F, \omega^{{(p-1)/2+i}}).\end{align*}
    \end{theorem}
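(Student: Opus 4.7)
The plan is to relate $\theta_{n,i}(E)$ to the Mazur--Tate element $\theta_{n,i+(p-1)/2}(E^F)$ of the quadratic twist, then transfer the well-understood Iwasawa invariants of the latter (governed by $L_p(E^F,\omega^{(p-1)/2+i},T)$, since $E^F$ has good ordinary or multiplicative reduction) back to the former. The extra $\frac{p-1}{2}p^{n-1}$ contribution will originate from the $\sqrt{p}$ factor in the period ratio supplied by Theorem~\ref{thm: pal}. Since the Dirichlet character cut out by $F/\QQ$ is precisely $\omega^{(p-1)/2}$, the twist identity $L(E^F,\psi,1)=L(E,\psi\omega^{(p-1)/2},1)$ applied with $\psi=\overline{\omega^{i+(p-1)/2}\chi}$ (and using that $\omega^{(p-1)/2}$ is self-inverse) makes the $L$-values appearing in the interpolation formulas of Proposition~\ref{interpprop} for $\chi(\theta_{n,i}(E))$ and $\chi(\theta_{n,i+(p-1)/2}(E^F))$ coincide.

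Consequently, for every nontrivial character $\chi$ of $G_n$,
\[
\frac{\chi(\theta_{n,i+(p-1)/2}(E^F))}{\chi(\theta_{n,i}(E))}=\frac{\tau(\omega^{i+(p-1)/2}\chi)}{\tau(\omega^i\chi)}\cdot\frac{\Omega_E^+}{\Omega_{E^F}^\epsilon},
\]
where $\epsilon$ records the sign $(-1)^{(p-1)/2}$. By Theorem~\ref{thm: pal}, $\Omega_{E^F}^\epsilon=u\sqrt{p}\,\Omega_E^+$ for some $p$-adic unit $u$ (the factors $u_1,u_2,c_\infty(E^F)$ are $p$-units since $p$ is odd), so the period factor contributes $\ord_p=-\tfrac{1}{2}$. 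The key technical input is that the Gauss sum ratio is a $p$-adic unit. For a nontrivial $\chi$ factoring through $G_k$ with $k\geq 1$, both $\omega^i\chi$ and $\omega^{i+(p-1)/2}\chi$ are primitive Dirichlet characters of conductor $p^{k+1}$; decomposing $a\in(\ZZ/p^{k+1})^\times$ as $a=\tilde b\cdot u$ with $\tilde b$ a Teichm\"uller lift of $b\in\{1,\dots,p-1\}$ and $u\in 1+p\Zp$, one computes
\[
\tau(\omega^j\chi)=\sum_{b=1}^{p-1}\omega^j(b)\,\zeta_{p^{k+1}}^{\tilde b}\sum_u\chi(u)\,\zeta_{p^{k+1}}^{\tilde b(u-1)},
\]
and the inner sum over $u$ is (to leading order) supported on a single value of $b$, yielding $\ord_p(\tau(\omega^j\chi))=(k+1)/2$ independently of $j$. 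Granting this, $\ord_p(\chi(\theta_{n,i}(E)))-\ord_p(\chi(\theta_{n,i+(p-1)/2}(E^F)))=\tfrac12$ for every nontrivial $\chi$ of $G_n$.

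To extract the Iwasawa invariants, I use that $\theta_{n,i+(p-1)/2}(E^F)$ is related to the $p$-adic $L$-function $L_p(E^F,\omega^{(p-1)/2+i},T)$ by normalizing with the $p$-unit $\alpha^{n+1}$ (ordinary case) or $a_p^{n+1}$ (multiplicative case); since $\mu(L_p(E^F,\omega^{(p-1)/2+i},T))=0$ by hypothesis, we have $\mu(\theta_{n,i+(p-1)/2}(E^F))=0$ and $\lambda(\theta_{n,i+(p-1)/2}(E^F))=\lambda(E^F,\omega^{(p-1)/2+i})$ for $n\gg 0$. For an element of $\Zp[G_n]$ with $\mu=0$ and $\lambda=L<(p-1)p^{n-1}$, a standard Newton polygon argument shows that evaluation at a character of $G_n$ of exact order $p^n$ has $\ord_p$ equal to $L/((p-1)p^{n-1})$. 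The uniform $\tfrac{1}{2}$ valuation shift at such characters, combined with the hypothesis $\mu(\theta_{n,i}(E))\geq 0$, forces $\mu(\theta_{n,i}(E))=0$ and $\lambda(\theta_{n,i}(E))=\tfrac{p-1}{2}p^{n-1}+\lambda(E^F,\omega^{(p-1)/2+i})$, as claimed.

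The main obstacle is rigorously establishing the Gauss sum claim that $\ord_p(\tau(\omega^j\chi))=(k+1)/2$ independently of $j$ for every wild primitive $\chi$ of conductor $p^{k+1}$ with $k\geq 1$. The case $k=1$ reduces to a clean orthogonality computation over $(1+p\Zp)/(1+p^2\Zp)$; for $k\geq 2$ one must control the effect of the nonlinear parametrization $w\mapsto(1+p)^w$ of $1+p\Zp$, verifying that its higher-order corrections contribute only to strictly lower-order $p$-adic terms and do not alter the leading valuation. The trivial character and other finitely many low-conductor characters require separate bookkeeping but cannot influence the $\lambda$-invariant, which is detected by characters of maximal order.
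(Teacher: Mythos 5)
Your overall strategy matches the paper's: use the twist identity for $L$-values, cancel Gauss sums, extract the $\sqrt{p}$ period factor from Theorem~\ref{thm: pal}, and translate a uniform half-integer valuation shift into a $\lambda$-invariant formula. Two issues, however, warrant attention.

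First, the step you flag as the main obstacle is not one. For a primitive Dirichlet character $\eta$ of conductor $p^m$ we have $\tau(\eta)\,\tau(\overline{\eta})=\eta(-1)p^m$, and $\tau(\overline{\eta})=\eta(-1)\overline{\tau(\eta)}$; since complex conjugation lies in the decomposition group of the unique prime above $p$ in $\QQ(\mu_{p^m},\mu_{\ord(\eta)})$, the two factors have equal $p$-adic valuation, forcing $\ord_p(\tau(\eta))=m/2$ with no sum manipulation required. The paper invokes exactly this (``same conductor, hence the ratio is a unit'') and later records $\tau(\chi)\tau(\overline\chi)=\pm p^{n+2}$ explicitly. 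Your parametrization of $1+p\Zp$ and the worry about nonlinear corrections are unnecessary.

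Second, and more seriously, the step where you assert $\mu(\theta_{n,i+(p-1)/2}(E^F))=0$ and $\lambda(\theta_{n,i+(p-1)/2}(E^F))=\lambda(E^F,\omega^{(p-1)/2+i})$ for $n\gg0$ is not justified by the hypotheses. The unstabilized Mazur--Tate element $\theta_n(E^F)$ is not simply $\alpha^{n+1}$ times a projection of $L_p(E^F)$; it differs from the stabilized element $\theta_n(f_{E^F,\alpha})$ by a term involving $\cor_{n-1}^n(\theta_{n-1}(E^F))$. The identification of Iwasawa invariants you are using is \cite[Prop.~3.7]{PW}, which requires $E^F[p]$ to be irreducible as a Galois module --- a hypothesis Theorem~\ref{quad} does not impose. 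Indeed the paper's own motivating example ($X_0(11)$ at $p=5$) shows the invariants of $\theta_n$ and of $L_p$ can genuinely diverge when the representation is reducible. The paper sidesteps this entirely by never invoking the Iwasawa invariants of $\theta_n(E^F)$: it plugs the twisted $L$-value directly into the interpolation property of $L_p(E^F,\omega^{(p-1)/2+i},T)$ evaluated at $\zeta_{p^n}-1$ and reads off the valuation via the Weierstrass preparation theorem applied to the $p$-adic $L$-function itself. Your argument can be repaired by observing that a primitive character $\chi$ of $G_n$ kills $\cor_{n-1}^n(\cdot)$, so $\chi(\theta_n(E^F))=\chi(\theta_n(f_{E^F,\alpha}))=\alpha^{n+1}L_p(E^F,\omega^{(p-1)/2+i},\chi(\gamma)-1)$, but as written the detour through the invariants of $\theta_n(E^F)$ is a gap.
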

\begin{remark}
   Recall from the discussion in \S\ref{sec:potmult} that when $E$ has potentially multiplicative reduction, it necessarily achieves multiplicative reduction over a quadratic extension. Thus, Theorem~\ref{quad} gives us a formula for $\lambda(\theta_{n,i}(E))$ for all cases of potentially multiplicative reduction provided that the assumptions on the $\mu$-invariants hold.

    We also note that the integrality of the $p$-adic $L$-function attached to $E^F$ is not guaranteed \textit{a priori} since we normalise by the Néron periods, but our assumption on the $\mu$-invariant ensures we have an integral power series (otherwise we would have $\mu<0$). Similarly, the assumption on $\mu(\theta_{n,i}(E))$ is to ensure integrality. Alternatively, assuming $\mu(\theta_{n,i}(E))= \mu(L_p(E^F, \omega^{(p-1)/2+i}, T))$ for all large $n$ also gives us the same formula for the $\lambda$-invariant. 

\end{remark}
\begin{proof}
We give the proof when $i=0$ for notational convenience; the entire argument remains the same for a general even $i$. For a character $\chi$ on $G_n$, we have 
   \[L(E,\chi, 1) = L(E^F, \omega^{(p-1)/2}\chi, 1),\]
   where $\omega^{(p-1)/2}$ is the quadratic character corresponding to the quadratic extension $F/\QQ$. 
   By the interpolation property of Mazur--Tate elements, we have
   \begin{align*}
    \overline{\chi}(\theta_{n, 0}(E)) &= \tau(\overline{\chi})\frac{L(E, \chi, 1)}{\Omega_E^+},
\end{align*}
which can be rewritten as
\[\overline{\chi}(\theta_{n, 0}(E)) = {\frac{\tau(\overline{\chi})}{\tau(\omega^{(p-1)/2}\overline{\chi})}}\cdot {\frac{\Omega_{E^F}^{\epsilon'}}{\Omega_E^+}}\cdot\left(\tau(\omega^{(p-1)/2}\overline{\chi}) \frac{L(E^F,\omega^{(p-1)/2}{\chi}, 1)}{\Omega_{E^F}^{\epsilon'}}\right),\]
where $\epsilon'=(-1)^{(p-1)/2}$. (The theorem's hypothesis that $i$ is even is needed here since Theorem \ref{thm: pal} only gives us expressions for the period ratios corresponding to even characters $\chi\omega^i$). 
The ratio of the two Gauss sums is a $p$-adic unit (since $\omega^{(p-1)/2}\overline{\chi}$ and $\overline{\chi}$ have the same conductor when $n$ is large enough), and the ratio of the periods, up to $p$-adic units, is $\sqrt{p}$ by Theorem \ref{thm: pal}. Taking valuations on both sides gives  
\[\ord_p(\overline{\chi}(\theta_{n, 0}(E))) = \frac{1}{2}+ \ord_p\left(\tau(\omega^{(p-1)/2}\overline{\chi}) \frac{L(E^F,\omega^{(p-1)/2}{\chi}, 1)}{\Omega_{E^F}^{\epsilon'}}\right).\]
We focus on computing the valuation on the right-hand side. Crucially, we can attach a $p$-adic $L$-function to $E^F$ having the following interpolation property: 
\[L_p(E^F,\omega^{(p-1)/2}, \zeta_{p^n}-1)= \frac{1}{\alpha_{E^F}^{n+1}}\left(\tau(\omega^{(p-1)/2}\overline{\chi}) \frac{L(E^F,\omega^{(p-1)/2}{\chi}, 1)}{\Omega_{E^F}^{\epsilon'}}\right),\]  
where $\zeta_{p^n}$ is the image of a topological generator of $\Gamma$ under $\overline{\chi}$, and $\alpha_{E^F}$ is the root of the polynomial $X^2+a_p(E^F)X+p$ with trivial $p$-adic valuation when $E^F$ is ordinary at $p$ and it is $\pm1$ when $E^F$ is multiplicative at $p$.
This gives a formula for the valuation of $\overline{\chi}(\theta_{n, 0}(E))$, via the $p$-adic Weierstrass preparation theorem, in terms of the Iwasawa invariants of $L_p(E^F,\omega^{(p-1)/2}, T)$ for $n$ large enough:
\begin{equation}\label{ord1}
    \ord_p(\overline{\chi}(\theta_{n, 0}(E)))= \frac{1}{2} + \frac{\lambda(E^F, \omega^{(p-1)/2})}{p^{n-1}(p-1)}
\end{equation}
as we have assumed the $\mu$-invariant vanishes for this $p$-adic $L$-function.  We now compute $\ord_p(\overline{\chi}(\theta_{n, 0}(E)))$ differently as follows. For each $n$, define $\mu_n\colonequals\mu(\theta_{n,0}(E))$ and $\lambda_n\colonequals\lambda(\theta_{n,0}(E))$.  We write
\begin{align*}
    \theta_{n, 0}(E)(T)&=p^{\mu_n}g_n(T) u_n(T),\end{align*}
where $u_n(T)\in \Zp[[T]]^\times$ with $g_n(T)=T^{\lambda_n}+ p\cdot h_n(T) \in \Zp[T]$ for some $h_n(T) \in \Zp[T]$.
Hence,
\begin{align*}
    \ord_p(\overline{\chi}(\theta_{n, 0}(E))) &= \mu_n+ \ord_p(g_n(\zeta_{p^n}-1)).\end{align*}
Note that, in particular, $|\ord_p(\overline{\chi}(\theta_{n, 0}(E)))|<1$ for $n$ sufficiently large from Equation \eqref{ord1}. Since $\ord_p(g_n(\zeta_{p^n}-1))\geq 0$ (as $g_n(T)\in \Zp[T]$) and $\mu_n \in \ZZ$, we deduce that $\mu_n=0$.

We have 
\begin{align*}
    \ord_p(\overline{\chi}(\theta_{n, 0}(E))) &\geq  \text{min}\left\{\frac{\lambda_n}{p^{n-1}(p-1)}, 1+\ord_p(g_n(\zeta_{p^n}-1))\right\}.\end{align*}
    Using Equation \eqref{ord1}, we obtain, for $n\gg0$, 
    \begin{equation}\label{compare}
        \frac{1}{2} + \frac{\lambda(E^F, \omega^{(p-1)/2})}{p^{n-1}(p-1)}\geq \text{min}\left\{\frac{\lambda_n}{p^{n-1}(p-1)}, 1+\ord_p(g_n(\zeta_{p^n}-1))\right\}.
    \end{equation}
For $n$ large enough, the left-hand side becomes strictly less than $1$, so 
\[1 > \text{min}\left\{\frac{\lambda_n}{p^{n-1}(p-1)}, 1+\ord_p(h_n(\zeta_{p^n}-1))\right\}.\]
Since $\ord_p(h_n(\zeta_{p^n}-1))\geq 0$ (as $h_n(T) \in \Zp[T]$), we deduce that $\frac{\lambda_n}{p^{n-1}(p-1)}<1$. Consequently, \eqref{compare} becomes an equality and 
\begin{equation}
    \frac{\lambda_n}{p^{n-1}(p-1)} = \frac{1}{2} + \frac{\lambda(E^F, \omega^{(p-1)/2})}{p^{n-1}(p-1)},
\end{equation}
which results in the desired formula for $\lambda_n$.
\end{proof} 

We now investigate the potentially supersingular case.
Recall from the statement of Theorem~\ref{thm:PW-ss} that we define
\[ q_n=\begin{cases}
    p^{n-1}-p^{n-2}+\cdots+p-1 \space \text{ if $n$ even}\\
    p^{n-1}-p^{n-2}+\cdots+p^2-p \space \text{ if $n$ odd.}
\end{cases} \]
Using a similar argument and the plus and minus $p$-adic $L$-functions defined in \cite{pollack03}, we have:

\begin{theorem}\label{ssquad}
    Let $E/\QQ$ be an elliptic curve with additive reduction at an odd prime $p$. Let $i$ be an even integer between $0$ and $p-2$. Assume that
    \begin{itemize}
        \item the quadratic twist $E^F$ has supersingular reduction at $p$ with $a_p(E^F)=0$ and 
        \item the $\mu$-invariants of the $\omega^{(p-1)/2+i}$-isotypic component of the plus and minus $p$-adic $L$-functions are both 0, that is, $\mu(L^\pm_p(E^F, \omega^{(p-1)/2+i}, T)) = 0$.
    \end{itemize} 
    Let $\lambda^\pm(E^F, \omega^{(p-1)/2+i})$ denote the $\lambda$-invariants of $L^\pm_p(E^F, \omega^{(p-1)/2+i}, T)$ respectively. Then we have, for all $n$ large enough, 
    \begin{align*}
      \mu(\theta_{n,i}(E)) &= 0, \\ 
   \lambda(\theta_{n,i}(E))&= \frac{(p-1)}{2}\cdot p^{n-1} 
 + q_n+ \begin{cases} \lambda^+(E^F, \omega^{(p-1)/2+i}) \text{  if $n$ even}\\
  \lambda^-(E^F, \omega^{(p-1)/2+i})
  \text{  if $n$ odd}.\end{cases}
 \end{align*}
\end{theorem}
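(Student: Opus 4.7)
The strategy parallels the proof of Theorem \ref{quad}, with Pollack's plus and minus $p$-adic $L$-functions of $E^F$ replacing the ordinary $p$-adic $L$-function. Fix a Dirichlet character $\chi$ of $G_n$ of exact conductor $p^{n+1}$. Since $\omega^{(p-1)/2}$ is the quadratic character cutting out $F/\QQ$, the twist identity $L(E,\omega^i\chi,1) = L(E^F,\omega^{(p-1)/2+i}\chi,1)$ combined with Proposition \ref{interpprop} yields
\[
\overline{\chi}(\theta_{n,i}(E)) \;=\; \frac{\tau(\omega^i\overline{\chi})}{\tau(\omega^{(p-1)/2+i}\overline{\chi})} \cdot \frac{\Omega_{E^F}^{\epsilon'}}{\Omega_E^{\epsilon}} \cdot \overline{\chi}(\theta_{n,(p-1)/2+i}(E^F)),
\]
where $\epsilon$ and $\epsilon'$ are dictated by the parities of $\omega^i(-1)$ and $\omega^{(p-1)/2+i}(-1)$. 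The ratio of Gauss sums is a $p$-adic unit, and by Theorem \ref{thm: pal} the ratio of periods equals $\sqrt{p}$ up to a $p$-adic unit (with the sign conventions matching the two cases $p \equiv 1, 3 \pmod{4}$ of that theorem). Taking $p$-adic valuations produces
\[
\ord_p\bigl(\overline{\chi}(\theta_{n,i}(E))\bigr) \;=\; \tfrac{1}{2} + \ord_p\bigl(\overline{\chi}(\theta_{n,(p-1)/2+i}(E^F))\bigr).
\]

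Next, because $a_p(E^F)=0$, I invoke the isotypic analog of Theorem \ref{thm:PW-ss} applied to the $\omega^{(p-1)/2+i}$-component of the Mazur--Tate elements of $E^F$. Under the hypothesis $\mu^\pm(E^F,\omega^{(p-1)/2+i})=0$, the plus/minus decomposition of \cite{pollack03} together with the Kurihara--Perrin-Riou argument gives $\mu(\theta_{n,(p-1)/2+i}(E^F))=0$ and
\[
\lambda(\theta_{n,(p-1)/2+i}(E^F)) \;=\; q_n + \begin{cases} \lambda^+(E^F,\omega^{(p-1)/2+i}) & n \text{ even}, \\ \lambda^-(E^F,\omega^{(p-1)/2+i}) & n \text{ odd}.\end{cases}
\]
Since $q_n + \lambda^\pm < p^{n-1}(p-1)$ for $n\gg0$, evaluating on a character of exact conductor $p^{n+1}$ translates this into
\[
\ord_p\bigl(\overline{\chi}(\theta_{n,(p-1)/2+i}(E^F))\bigr) \;=\; \frac{q_n + \lambda^{\pm}(E^F,\omega^{(p-1)/2+i})}{p^{n-1}(p-1)},
\]
and combining with the previous step,
\[
\ord_p\bigl(\overline{\chi}(\theta_{n,i}(E))\bigr) \;=\; \tfrac{1}{2} + \frac{q_n + \lambda^{\pm}(E^F,\omega^{(p-1)/2+i})}{p^{n-1}(p-1)}.
\]

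To extract the Iwasawa invariants of $\theta_{n,i}(E)$, I apply the bookkeeping from the proof of Theorem \ref{quad}: writing $\theta_{n,i}(E)(T)=p^{\mu_n}(T^{\lambda_n}+p\,g_n(T))u_n(T)$ with $g_n\in\Zp[T]$ and $u_n\in\Zp[[T]]^\times$, evaluation at $T=\zeta_{p^n}-1$ yields
\[
\tfrac{1}{2} + \frac{q_n+\lambda^{\pm}}{p^{n-1}(p-1)} \;\geq\; \mu_n + \min\!\left\{\tfrac{\lambda_n}{p^{n-1}(p-1)},\,1+v_p(g_n(\zeta_{p^n}-1))\right\}.
\]
Asymptotically $q_n \sim p^n/(p+1)$, so $(q_n+\lambda^\pm)/(p^{n-1}(p-1))$ converges to $p/(p^2-1)$, which is strictly less than $\tfrac{1}{2}$ for every odd prime $p\geq3$. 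Consequently the left-hand side is strictly less than $1$ for all sufficiently large $n$, and the assumption $\mu_n\geq0$ forces $\mu_n=0$; moreover the minimum must be realised by $\lambda_n/(p^{n-1}(p-1))$ (the other term being $\geq1$), turning the above inequality into an equality. This yields $\lambda_n = \tfrac{p-1}{2}p^{n-1} + q_n + \lambda^{\pm}(E^F,\omega^{(p-1)/2+i})$, as desired.

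The main obstacle I foresee is the first ingredient of the second paragraph: Theorem \ref{thm:PW-ss} as recalled in the excerpt treats only the trivial isotypic component $\theta_n(E)=\theta_{n,0}(E)$, so the extension to the $\omega^{(p-1)/2+i}$-component must be justified by revisiting the arguments of \cite{pollack03} and \cite{sprung} for the logarithmic matrix decomposition on a nontrivial isotypic piece. This is expected to be routine via twisting by the Teichmüller character, but it is the step where the most care is needed.
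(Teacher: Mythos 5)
Your proposal is correct and follows essentially the same route as the paper: the paper's proof simply repeats the argument of Theorem \ref{quad}, replacing the ordinary $p$-adic $L$-function of $E^F$ by the plus/minus theory (via \cite[Proposition 6.18]{pollack03} and \cite[Theorem 4.1]{PW}) to get $\ord_p(\overline{\chi}(\theta_{n,i}(E)))=\tfrac12+\frac{q_n+\lambda^\pm}{p^{n-1}(p-1)}$, and then uses the same Weierstrass-preparation comparison, noting as you do that the right-hand side is eventually less than $1$. Your closing caveat about the nontrivial isotypic component is handled in the paper by citing \cite[Theorem 4.1]{PW}, which applies to all $\omega$-components, so no extra work is needed there.
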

\begin{remark}
  A well-known conjecture of Greenberg (\cite[Conjecture~1.11]{greenberg}) asserts that for a good ordinary prime $p$, the $\mu$ invariant of the $p$-adic $L$-function is always zero when $E[p]$ is irreducible as a $\Gal(\overline{\QQ}/\QQ)$-representation.
   In the good supersingular case, it is conjectured (\cite[Conjecture~6.3]{pollack03}) that both $\mu(L^\pm_p(E^F, \omega^{i}, T))=0$ for all $0\leq i\leq p-2$. Both conjectures are supported by a large amount of numerical data. 
\end{remark}
\begin{proof}
    One proceeds as in the proof of Theorem \ref{quad}. The only difference is that we relate the Mazur--Tate elements of $E^F$ to the plus and minus $p$-adic $L$-functions via \cite[Proposition~6.18]{pollack03}. Indeed, we have
    \[\ord_p(\overline{\chi}(\theta_{n, 0}(E))) = \frac{1}{2}+\frac{ q_n + \lambda^\pm(E^F, \omega^{(p-1)/2+i})}{p^{n-1}(p-1)},\]
    where the sign is chosen according to the parity of $n$ (see Theorem \ref{thm:PW-ss}, \cite[Theorem 4.1]{PW}). 
    We write the analogue of equation \eqref{compare} and for large $n$, the inequality
    \[1 > \frac{1}{2}+ \frac{q_n + \lambda^\pm(E^F, \omega^{(p-1)/2+i})}{p^{n-1}(p-1)}\]
allows us to proceed as before to conclude the proof.  
\end{proof}
\subsection{Potential good ordinary reduction: The general case}
We give a more general result in the potentially good ordinary case, assuming Conjecture \ref{conj:BSD}. This is Theorem \ref{thmB} in the introduction of this article.
\begin{theorem}\label{thm: bsd}
         Let $E/\QQ$ be an elliptic curve with additive, potentially good ordinary reduction at a prime $p\geq 5$ and minimal discriminant $\Delta_E$. 
         Assume that
        \begin{itemize}
            \item Conjecture~\ref{conj:BSD} is true over $k_{n}$ for  $n \gg 0$,
            \item  $\lambda(\theta_{n,0}(E))<p^{n-1}(p-1)$ for $n\gg0$.
        \end{itemize}
Then, when $n$ is sufficiently large, we have 
        \begin{align*}
          \mu(\theta_{n,0}(E)) &= \mu(\cX(E/\QQ_\infty))\\
            \lambda(\theta_{n,0}(E)) &= \frac{(p-1)\cdot \ord_p(\Delta_E)}{12}\cdot p^{n-1}+{\lambda(\cX(E/\QQ_\infty))}.
        \end{align*}
\end{theorem}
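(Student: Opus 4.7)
The plan is to apply Conjecture~\ref{conj:pBSD} over each $k_n$, factor $L(E/k_n,1) = \prod_{\chi\in\hat{G}_n}L(E,\chi,1)$, and read off $\lambda_n := \lambda(\theta_{n,0}(E))$ via the interpolation formula of Proposition~\ref{interpprop}. Since $\cX(E/\QQ_\infty)$ is $\Lambda$-torsion, Theorem~\ref{thm:control} makes $\Sel_{p^\infty}(E/k_n)$ finite for $n\gg0$, and the standing assumption $\lambda_n<(p-1)p^{n-1}$ then forces $\mathrm{rank}\,E(k_n)=0$, so every Dirichlet twist $L(E,\chi,1)$ is non-zero and Conjecture~\ref{conj:pBSD} applies in its usual rank-$0$ form.

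The first task is to catalogue the growth of each quantity on the BSD side. Theorem~\ref{sha} (with $r_\infty=0$) yields $v_p(|\Sha(E/k_n)[p^\infty]|) = \lambda(\cX)n + \mu(\cX)p^n + \nu$; the torsion $|E(k_n)_{\mathrm{tors}}|$ stabilises because $E[p^\infty](\QQ_\infty)$ is finite; the conductor-discriminant formula gives $v_p(|\Delta_{k_n}|) = \sum_{k=1}^n(p^k-p^{k-1})(k+1)$; and by~\eqref{perratio} combined with Proposition~\ref{fudge} (using that $k_n/\QQ$ is unramified outside $p$), the only non-trivial local period factor comes from the unique place above $p$, giving $v_p(\Omega_{E/k_n}/\Omega_E^{p^n}) = \lfloor p^n\ord_p(\Delta_E)/12\rfloor$. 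Moreover, Lemma~\ref{ediv} gives $e\mid p-1$, hence $p^n\ord_p(\Delta_E)\equiv\ord_p(\Delta_E)\pmod{12}$, so the Kodaira type of $E$ at the unique place of $k_n$ above $p$ coincides with that at $p$ throughout the tower, making the local Tamagawa contribution there constant in $n$.

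To isolate $\lambda_n$ I would telescope the BSD equation between levels $n$ and $n-1$. The ratio $L(E/k_n,1)/L(E/k_{n-1},1) = \prod_{\chi\text{ of exact order }p^n}L(E,\chi,1)$, and by invariance of $v_p$ on a Galois orbit of characters of fixed order, it equals $\phi(p^n)\cdot v_p(L(E,\chi_n,1))$ for any fixed primitive $\chi_n$. Pairing this with Proposition~\ref{interpprop}, using $\lambda_n<\phi(p^n)$ and $\mu_n=\mu(\cX)$ to get $v_p(\chi_n(\theta_{n,0}(E))) = \mu(\cX)+\lambda_n/\phi(p^n)$, and extracting $v_p(\tau(\chi_n)) = (n+1)/2$ from the identity $\tau(\chi)\tau(\chi^{-1}) = \chi(-1)p^{n+1}$ together with $\chi_n(-1)=1$ (since $k_n$ is totally real) and $v_p$-symmetry between the Galois conjugates $\chi_n$ and $\chi_n^{-1}$, the $\phi(p^n)(n+1)/2$ Gauss-sum total cancels the $\phi(p^n)(n+1)/2$ discriminant increment, and the $\phi(p^n)\mu(\cX)$ interpolation piece cancels the $\mu(\cX)\phi(p^n)$ increment from $\Sha$-growth. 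What remains is
\[ \lambda_n = \bigl\lfloor p^n\ord_p(\Delta_E)/12\bigr\rfloor - \bigl\lfloor p^{n-1}\ord_p(\Delta_E)/12\bigr\rfloor + \lambda(\cX(E/\QQ_\infty)). \]
To finish, writing $\ord_p(\Delta_E)/12 = k/e$ in lowest terms and using $p^n\equiv 1\pmod e$, one has $\lfloor p^nk/e\rfloor - \lfloor p^{n-1}k/e\rfloor = (p-1)p^{n-1}\cdot k/e$ exactly, giving the claimed $(p-1)p^{n-1}\ord_p(\Delta_E)/12$.

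The main obstacle I expect is the handling of Tamagawa contributions at primes $v\neq p$ of bad reduction for $E/\QQ$: the number of primes of $k_n$ above $v$ can grow through the tower, so whenever $p\mid c_v(E/\QQ_v)$ the product $C_{E/k_n}$ picks up an unbounded $p$-adic valuation. These growing terms must be shown to be absorbed into $\lambda(\cX(E/\QQ_\infty))$ (which is natural in view of the conjectural link between $\cX$ and local Euler factors supplied by the Iwasawa Main Conjecture) rather than surviving as an independent correction. Some care is also required to ensure that the formal period factor $v_p(\Omega_E^+)$ cancels cleanly when equating the interpolation- and BSD-side telescoped expressions.
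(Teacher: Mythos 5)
Your telescoping computation (conductor--discriminant vs.\ Gauss sums, the period increment via \eqref{perratio} and Proposition~\ref{fudge}(2), the $\Sha$-growth from Theorem~\ref{sha}, and the exactness of the floor using $e\mid p-1$) is exactly the paper's argument, indexed one level lower. The genuine gap is in your reduction to the ``rank-$0$ form'' of the conjecture. Theorem~\ref{thm:control} together with $\cX(E/\QQ_\infty)$ being $\Lambda$-torsion does \emph{not} make $\Sel_{p^\infty}(E/k_n)$ finite; it only bounds its corank by $\lambda(\cX(E/\QQ_\infty))$. And the hypothesis $\lambda(\theta_{n,0}(E))<(p-1)p^{n-1}$ does not force $\mathrm{rank}\,E(k_n)=0$: it only rules out the simultaneous vanishing of all $(p-1)p^{n-1}$ twists by characters of exact order $p^n$ (a full-size rank jump at level $n$), whereas a positive rank inherited from lower layers, e.g.\ $\mathrm{rank}\,E(\QQ)\geq 1$, is perfectly compatible with a small $\lambda$-invariant. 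Since the theorem makes no rank assumption, your argument as written only proves a special case. The paper avoids this by working with leading terms: Rohrlich's non-vanishing theorem shows $\ord_{s=1}L(E/k_n,s)$ stabilizes and (under BSD) $E(\QQ_\infty)$ is finitely generated, so for $n\gg0$ the telescoped quotient of leading terms is exactly the product of the non-zero twists $L(E,\chi,1)$ of exact conductor $p^{n+2}$, the regulator ratio contributes $p^{\mathrm{rank}(E(k_{n+1}))}$, and this cancels against the $-r_\infty$ appearing in the exponent of Theorem~\ref{sha}.

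Your flagged ``main obstacle'' concerning Tamagawa factors at $v\neq p$ is not an actual obstacle: in $\QQ_\infty/\QQ$ every finite prime is finitely decomposed, so the set of places of $k_n$ above any $v$, and hence $C_{E/k_n}$, stabilizes for $n\gg0$; the paper simply chooses $n$ past this point, and nothing needs to be absorbed into $\lambda(\cX(E/\QQ_\infty))$. Likewise the worry about $v_p(\Omega_E^+)$ cancelling is vacuous, since the same period $\Omega_{E/\QQ}$ occurs in \eqref{perratio} and in the interpolation formula of Proposition~\ref{interpprop}, so the telescoped identity only ever involves the normalised values $L(E,\chi,1)/\Omega_{E/\QQ}$. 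With the rank-$0$ shortcut replaced by the leading-term/regulator bookkeeping above, your computation becomes the paper's proof.
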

\begin{proof}
From \cite{Rohrlich1984}, $L(E/\QQ, \chi, 1)=0$ for only finitely many Dirichlet characters $\chi$ of $p$-power order. We also have 
\[L(E/k_n,s)= \prod_{\chi}L(E/\QQ, \chi, s),\]
where the product is taken over all characters $\chi$ on $\Gal(k_n/\QQ)$. Thus, the order of vanishing of $L(E/k_n, s)$ at $s=1$ must stabilize for $n$ large. Furthermore, $E(\QQ_\infty)_{\text{tor}}$ is finite by \cite[Theorem~3]{Serre1971/72} (see also the main result of \cite{CDKN} for a precise description). Under the Birch and Swinnerton-Dyer conjecture, this implies that $E(\QQ_\infty)$ is finitely generated.
 Choose $n$ large enough so that 
 \begin{itemize}
     \item $\ord_{s=1}L(E/k_{n+1}, s)=\ord_{s=1}L(E/k_{n}, s)$
     \item $E(k_{n+1})=E(k_{n})$
     \item $\mathrm{Tam}(E/k_{n+1})= \mathrm{Tam}(E/k_{n})$.
 \end{itemize} 
For such $n$, we have
\begin{equation}\label{pbsd}
    \frac{|\Sha(E/k_{n+1})[p^\infty]|}{|\Sha(E/k_{n})[p^\infty]|}= \frac{\Omega_{E/k_{n}}}{\Omega_{E/k_{n+1}}}{ \prod_{\chi}L(E/\QQ, \chi, 1)}\cdot\sqrt{\frac{|\Delta_{k_{n+1}}|}{|\Delta_{k_{n}}|}}\cdot \frac{\textup{Reg}(E/k_{n})}{\textup{Reg}(E/k_{n+1})},
\end{equation} 
where the product is taken over all characters $\chi$ on $G_{n+1}$ that do not factor through $G_n$. There are $p^{n}(p-1)$ such characters, each of conductor $p^{n+2}$. The conductor-discriminant formula tells us that
\[\ord_p\left(\sqrt{\frac{|\Delta_{k_{n+1}}|}{|\Delta_{k_{n}}|}}\right)= p^{n}(p-1)\cdot \frac{n+2}{2}.\]
For any finite extension of number fields $L_1/L_2$ such that $E(L_1)=E(L_2)$, we have 
\[\frac{\textup{Reg}(E/L_1)}{\textup{Reg}(E/L_2)}=[L_1:L_2]^{\text{rank}(E(L_2))}.\]
Thus, for $L_1/L_2=k_{n+1}/k_n$, the quotient of the regulators is equal to $p^{\text{rank}(E(k_{n+1}))}$. 
We deduce from \eqref{perratio} that
\begin{equation}
    \frac{\Omega_{E/k_{n+1}}}{\Omega_{E/k_{n}}}= \Omega_{E/\QQ}^{p^n(p-1)} \prod_{v, w\mid v} \left|\frac{\omega_v^{\min}}{\omega_w^{\min}}\right|_{w},
\end{equation}
where $v$ runs over places of $k_{n+1}$ and $w$ over places of $k_{n}$ above $v$. Putting these together, the $p$-adic valuation of the right-hand side of \eqref{pbsd} can be expressed as
\begin{equation*}
     \ord_p\left(\prod_\chi\frac{L(E/\QQ, \chi, 1)}{\Omega_{E/\QQ}} \right)- \ord_p\left(\prod_{v, w\mid v} \left|\frac{\omega_v^{\min}}{\omega_w^{\min}}\right|_{w}\right) + p^{n}(p-1)\cdot \frac{n+2}{2}- \text{rank}(E(k_{n+1})).
\end{equation*}

Applying Lemma \ref{fudge}(2) on the totally ramified extension $(k_{n+1})_p/(k_n)_p$ gives
\[\ord_p\left(\prod_{v, w\mid v} \left|\frac{\omega_v^{\min}}{\omega_w^{\min}}\right|_{w}\right)=\left\lfloor \frac{p^n(p-1)\ord_p(\Delta_{E})}{12}\right\rfloor.\]
The floor function can be omitted since the quantity $(p^n(p-1)\ord_p(\Delta_{E}))/{12}$ is an integer in the potentially good ordinary case (this follows from the definition of the semi-stability defect (Equation \eqref{eq: semistabilitydef}) and Lemma \ref{ediv}). From the interpolation property of Mazur--Tate elements, we have 
\[\tau(\chi)\cdot\frac{L(E/\QQ, \chi, 1)}{\Omega_{E/\QQ}} = \mu(\theta_{n+1,0}(E))+ \frac{\lambda(\theta_{n+1,0}(E))}{p^{n}(p-1)}.\]
(This is where we use the assumed upper bound on the $\lambda$-invariants to compute the valuation.) Since the Gauss sums satisfy $\tau(\chi)\tau(\overline{\chi})=\pm p^{n+2},$ we have
\[\ord_p\left(\prod_\chi \tau(\chi)\right)=p^n(p-1)\cdot \frac{n+2}{2}.\]
On the left hand side of \eqref{pbsd}, we know from Theorem \ref{sha} that for sufficiently large $n$, we have
\[\frac{|\Sha(E/k_{n+1})[p^\infty]|}{|\Sha(E/k_{n})[p^\infty]|}= \mu_E\cdot p^n(p-1) + \lambda_E - \text{rank}(E(\QQ_\infty)),\]
where $\mu_E,\lambda_E$ denote $\mu(\cX(E/\QQ_\infty))$ and $\lambda(\cX(E/\QQ_\infty))$ respectively.
Hence, we deduce that
\[\lambda(\theta_{n+1,0}(E))= (\mu_E-\mu(\theta_{n+1,0}(E)))\cdot p^n(p-1) + \left( \frac{p^n(p-1)\cdot\ord_p(\Delta_{E})}{12}\right)+\lambda_E.\]
Now, if $\mu_E> \mu(\theta_{n+1,0}(E))$, then $\lambda(\theta_{n+1,0}(E))>p^n(p-1)$ which is false under our assumption. If for some $n$ sufficiently large, $\mu_E<\mu(\theta_{n+1,0}(E))$, then $\lambda((\theta_{n+1,0}(E))<0$, which is not possible. Therefore, we must have $\mu_E=\mu(\theta_{n+1,0}(E))$ for large $n$, which gives the formula for $\lambda(\theta_{n+1,0}(E))$, as desired. 
\end{proof}
\subsubsection{Relation to Delbourgo's $p$-adic $L$-functions}\label{sssec: Delbourgo's Lp}
We discuss Delbourgo's construction of $p$-adic $L$-functions in \cite{del-compositio} for additive primes briefly to highlight that our proofs of Theorems \ref{quad} and \ref{ssquad} are in fact closely related to the aforementioned work. Delbourgo's construction is based on considering the newform obtained by twisting $f_E$ by a power of $\omega$. This twist has a non-trivial Euler factor at $p$, for which the construction of a $p$-adic $L$-function is possible. The potentially good ordinary case satisfies Hypothesis (G) therein (see \cite[Section 1.5]{del-compositio}) since good reduction is achieved over an abelian extension of $\Qp$ with degree dividing $(p-1)$ (see Lemmas \ref{lem: Kgdeg} and \ref{ediv}). In the potentially multiplicative case, Delbourgo considered the quadratic twist as we have done above. See also \cite[Lemma 1]{Bayer1992}, which explains how a twist of $f_E$ can be used to obtain a $p$-ordinary newform with level $N$ such that $p\mid\mid N$ when $E$ has potentially good ordinary reduction at $p$.

Let $e$ be the semistability defect, and let $\tilde{f}$ be the newform $f_E \otimes \omega^{(p-1)/e}$, where $E$ is an elliptic curve over $\QQ$ with potentially good ordinary reduction. To ensure that one of the roots of the Hecke polynomial of $\tilde{f}$ is a $p$-adic unit, one may need to twist $f_E$ by $\omega^{-(p-1)/e}$ instead. In what follows, we assume that the twist is by $\omega^{(p-1)/e}$. Consider the $\omega^{-(p-1)/e}$-isotypic component of the $p$-adic $L$-function $L_p(\tilde{f},\omega^{-(p-1)/e},T)$ (the corresponding Mazur--Tate element will be $\theta_{n, -(p-1)/e}(\tilde{f})$). An argument similar to the proof of Theorem~\ref{quad}, assuming integrality and the vanishing of the $\mu$-invariant, would give formulae of the form
\begin{align}\label{eqn: delb}
    \lambda(\theta_{n,0}(E)) = \ord_p\left(\frac{\Omega_{\tilde{f}}}{\Omega_E}\right)\cdot p^{n-1}(p-1)+\lambda(L_p(\tilde{f},\omega^{-(p-1)/e},T)).
\end{align}
Note that the assumption on $\mu(L_p(\tilde{f},\omega^{-(p-1)/e},T))$ would pin down a period for $\tilde{f}$. We note that Theorem~\ref{thm: bsd} gives an expression for the $p$-adic valuations of the ratio of periods that occurs in \eqref{eqn: delb} in terms of $\ord_p(\Delta_E)$.

We also mention that when $E$ has additive potentially supersingular reduction with $e=2$, we can use the plus and minus $p$-adic $L$-functions of $E^F$ to construct the counterparts for $E$. Delbourgo commented that his construction would give unbounded $p$-adic $L$-functions. Indeed, if we combine Delbourgo's construction with the work of \cite{pollack03}, one may decompose these unbounded $p$-adic $L$-functions into bounded ones, resulting in the elements utilized in the proof of Theorem~\ref{ssquad}.
\subsection{Speculation for the potentially supersingular case}\label{ss: speculation for pot. ss.}
For elliptic curves with additive potential supersingular reduction at a prime $p\geq3$, the extension over which supersingular reduction is achieved is of degree $e$ dividing $p+1$ and is not Galois. Taking its Galois closure gives us a non-abelian extension of the form $\QQ(\zeta_m, \sqrt[e]{p})$ for $m=3$ or 4. Unlike the potentially good ordinary case, the growth in the $\lambda$-invariants cannot be attributed solely to the valuation of the ratio of periods, as will become clear in the following example. 

\subsubsection{An example}
\begin{example}\label{example}
    Consider the elliptic curve $E=$\href{https://www.lmfdb.org/EllipticCurve/Q/4232i1/}{4232i1} at $p=23$. This curve achieves good supersingular reduction at a prime above $p$ over an extension of degree $e=6$. The $\lambda$-invariants of the associated Mazur--Tate elements are seen to satisfy, for $n\leq 9$,
    \[\lambda(\theta_{n,0}(E))= 19\cdot 23^{n-1}+1.\]
    Assuming the validity of the Birch and Swinnerton-Dyer conjecture, one can subtract off the growth coming from the periods (as given by Proposition~\ref{fudge}(2)) to deduce 
    \[\ord_p(\Sha(E/k_{n})[p^\infty])= \begin{cases}
        16q_{n-1}+17 \text{ for $n$ even,} \\ 16q_{n-1}+1 \text{ for $n$ odd.}
    \end{cases}\]

    The method in the proof of Theorem \ref{ssquad} does not apply in this case. To the authors' knowledge, such growth for $\Sha$ cannot be explained by the currently known plus and minus Iwasawa theory, which is what we would need for a proof similar to that of Theorem \ref{thm: bsd}. 
\end{example}

In Example~\ref{example}, $\lambda(\theta_{n,0}(E))$ is predicted to be given by $19\cdot 23^{n-1}+1$ according to our numerical data.  In particular, it does not exhibit the alternating plus/minus behaviour as in the good supersingular case; the valuation coming from the period ratios (applying Equation \eqref{perratio} to $(k_{n})_p/(k_{n-1})_p$ like we did in the proof of Theorem \ref{thm: bsd}) is given by 
\begin{align*}
    f_n\colonequals\left\lfloor \frac{p^{n-1}(p-1)\ord_p(\Delta_{E})}{12}\right\rfloor&=\left\lfloor \frac{(p+1)(p-1)\ord_p(\Delta_{E})}{e\cdot\gcd(12,\ord_p(\Delta_E))}\cdot\frac{p^{n-1}}{p+1}\right\rfloor\\
&=\left(\frac{(p^2-1)\cdot\ord_p(\Delta_{E})}{e\cdot\gcd(12,\ord_p(\Delta_E))}\right)\cdot q_{n-1}+O(1),
\end{align*}
where we use the formula for the semi-stability defect $e=12/\gcd(12,\ord_p(\Delta_E))$ (Equation \eqref{eq: semistabilitydef}). Observe that $\frac{p+1}{e}$ is an integer by Lemma \ref{ediv}. Then, the term $p^{n-1}/(p+1)$ matches $q_{n-1}$ upto a difference bounded with $n$, see for example \cite[Lemme 5.4]{PR}.

When $f_n$ is added to the valuation of the size of $\Sha$, the contribution of $q_{n-1}$ seems to be eliminated. We do not have a theoretical explanation for this phenomenon at present. It is interesting to note that in the potentially good ordinary case, the $f_n$ term has no such alternating behaviour since $e\mid p-1$.

We direct the reader to Table \ref{tab:pot.ss} given at the end of the article for further examples. 

\subsubsection{A conjecture of Lario}
In \cite{lario1992serre}, a conjecture involving mod $p$ Galois representations of elliptic curves that have additive potentially supersingular reduction at $p$ is formulated. In particular, it has been shown that this conjecture implies that Serre's modularity conjecture holds for the mod $p$ representation $\overline{\rho}_E$ (where $p$ is a potentially supersingular prime for $E$). Here, we show that Serre's conjecture (which is now a theorem by works \cite{khare2009serre} of Khare and Wintenberger for such $E$) implies the conjecture of Lario.
\begin{theorem}\label{conj: lario}
    Let $E$ be an elliptic curve with potentially supersingular reduction at an odd prime $p$. Let $N_E=Np^2$ be the conductor of $E$. Let $e$ be the semistability defect of $E$ at $p$. Assume $e>2$  and the Kodaira--N\'eron type to be II, III, or IV. Then there exists a newform $G \in S_2(\Gamma_0(pN), \omega^{2\frac{p+1}{e}})$ such that 
    \[a_l(G) \equiv a_l(E) \cdot l^{\frac{p+1}{e}} \pmod{p}\]
    for all primes $l \nmid Np$.
\end{theorem}
\begin{proof}
  The argument relies on \cite[Propositions 1.2.2 and 1.2.3]{AScoh1986} on the existence of lifts of systems of mod $p$ eigenvalues of modular forms to characteristic zero. Let $\overline\rho_E: G_\QQ \to \text{Aut}(E[p])$ denote the mod $p$ representation attached to $E$. From \cite[\S~1, Page 74]{Bayer1992}, the Serre weight of $\overline\rho_E$ is given by 
    \[k_{\overline\rho_E} = 2+ \left(1+\frac{p+1}{e}\right)(p-1),\] so the Serre weight of $\rho'\colonequals \rho \otimes \omega^{\frac{p+1}{e}}$ is given by 
    \[k_{\rho'}= p+1 - \frac{2(p+1)}{e}.\]
    Thus, there exists a cusp form $g \in S_{k_{\rho'}}(\Gamma_0(N))$ lifting the residual representation $\rho'$. In particular, this system of eigenvalues (mod $p$) occurs in $H^1(\Gamma_0(N), V_{k_{\rho'}-2}(\Fp))$ and, by \cite[Theorem 3.4]{ash-ste}, this system of eigenvalues occurs in $H^1(\Gamma_0(pN),\Fp)^{\omega^{2\left(\frac{p+1}{e}\right)}}$. From \cite[Prop. 1.2.3]{AScoh1986} and \cite[Prop. 2.5, Lemma 2.6]{ash-ste}, there exists $G \in S_2(\Gamma_0(pN), \omega^{2\frac{p+1}{e}})$ lifting the system of eigenvalues corresponding to $\rho'$.
\end{proof}
\begin{remark}
\begin{itemize}
    \item We note that the assumption on the Kodaira-Neron type can be removed in Theorem \ref{conj: lario} upon appropriately modifying the form $G$. Using the formula for $k_{\bar{\rho}_E}$ given for the other types in \cite[\S1, Page 74]{Bayer1992} one can get an analogue of Theorem \ref{conj: lario} using the same argument.
    \item When $p$ is a prime of potentially good ordinary reduction, there exists a newform in $S_2(\Gamma_0(pN),\omega^{\frac{2(p-1)}{e}})$ having the same system of eigenvalues as the twist $f_E\otimes \omega^{\frac{p-1}{e}}$ in characteristic zero. This is utilised by Delbourgo in \cite{del-compositio} to construct a $p$-adic $L$-function for $E$. In comparison, when $p$ is a potentially supersingular prime, Theorem~\ref{conj: lario} gives an analogous statement for $f_E$ `mod $p$'. This suggests that some form of mod $p$ multiplicity one result would be required to translate the congruence of residual representations into a congruence involving modular symbols of $E$. 
\end{itemize} 
\end{remark}

\subsubsection{Some CM curves}
For elliptic curves with $j$-invariant $0$ or $1728$ with additive potentially supersingular reduction, computations on SageMath suggest the following conjecture (see the last 4 entries of Table \ref{tab:pot.ss}).
\begin{conjecture}\label{cm}
    Let $E/\QQ$ be an elliptic curve with $j_E=0$ or $1728$ with additive, potentially supersingular reduction at $p \geq 5$. Assume that $E$ achieves supersingular reduction over an extension $K/\QQ$ with $[K:\QQ]>2$. Then there exist integers
    $\mathcal{M},\lambda^+, \lambda^-, \nu$ depending only on $E$ and $p$, such that if $\Sha(E/k_{n})[p^\infty]= p^{e_n}$ then
    \[e_n=\mathcal{M} p^n + (q_n+ \lambda^\pm)n+\nu\]
    for all $n$ large enough.
\end{conjecture}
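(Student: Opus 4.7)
The plan is to exploit the CM structure that is forced by the hypothesis $j_E\in\{0,1728\}$. In these cases $E$ has complex multiplication by an order in $K_{\mathrm{CM}}=\QQ(\sqrt{-3})$ (if $j_E=0$) or $\QQ(i)$ (if $j_E=1728$), and potential supersingular reduction at $p$ is equivalent to $p$ being inert in $K_{\mathrm{CM}}$. By Lemma \ref{lem: Kgdeg} and Lemma \ref{ediv}, $E$ acquires good supersingular reduction over a tame extension $K_g=\QQ(\zeta_m,\sqrt[e]{p})$ where $e\in\{3,4,6\}$; since $p\ge 5$, we have $\gcd([K_g:\QQ],p)=1$, so the restriction arguments of Lemma \ref{lem: sel1} and Theorem \ref{thm:control} apply and transport Iwasawa-theoretic data between $(K_g)_\infty$ and $\QQ_\infty$.

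Over $(K_g)_\infty$ the curve has good supersingular reduction, but at a \emph{ramified} prime, so the Pollack--Sprung plus/minus $p$-adic $L$-functions do not apply directly. I would first build the analogue of the plus/minus decomposition for this ramified supersingular setting using the CM Iwasawa theory of Rubin and Perrin-Riou: since $\mathrm{End}(E)\otimes\Qp$ splits the local Galois representation into one-dimensional pieces after restriction to the inertia of $K_{\mathrm{CM},p}$, one obtains a canonical factorization of the Coleman map that furnishes two bounded $p$-adic $L$-functions $L_p^\pm(E/(K_g)_\infty)\in\Lambda\otimes\overline{\QQ}_p$ with $\mu$-invariants $\mu^\pm$ and $\lambda$-invariants $\lambda^\pm$. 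A main conjecture, accessible in the CM setting via Rubin's work on two-variable main conjectures combined with descent, would identify these with the characteristic ideals of plus/minus Selmer groups.

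Next, I would descend to $\QQ_\infty$. Taking Galois invariants of the plus/minus Selmer groups under $\Gal((K_g)_\infty/\QQ_\infty)$ (which has order coprime to $p$) and invoking the control theorem in the style of Theorem \ref{thm:control}, one obtains Selmer groups over $\QQ_\infty$ whose growth along $k_n$ is controlled by $\mu^\pm$ and $\lambda^\pm$. Translating this to $\Sha(E/k_n)[p^\infty]$ via the $p$-part of Birch--Swinnerton-Dyer (Conjecture \ref{conj:pBSD}) as in the proof of Theorem \ref{thm: bsd}, and using the period ratio computation of Proposition \ref{fudge}(2) combined with \eqref{perratio}, the $\mathcal{M}p^n$ term should arise as the sum of the $\mu^\pm$-contribution and the discriminant contribution $\lfloor e_{F/\QQ}\ord_p(\Delta_E)/12\rfloor\cdot p^{n-1}(p-1)$, while the alternating $(q_n+\lambda^\pm)n$ term tracks the plus/minus alternation by parity of $n$ just as in Theorem \ref{thm:PW-ss}.

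The main obstacle is the construction of the correct plus/minus decomposition at a ramified supersingular prime, together with the associated main conjecture. This is exactly the step that is missing in the non-CM Example \ref{example}, and it is the reason the present statement is made only as a conjecture. In the CM cases $j_E\in\{0,1728\}$ the extra symmetry from $\mathrm{End}(E)$ is what makes such a decomposition plausible: one can hope to use the Grossencharacter attached to $E$ to rigidify the Perrin-Riou exponential and define $L_p^\pm$ intrinsically. A secondary obstacle is verifying that the $\mu^\pm$ invariants combine with the period contribution to yield a \emph{single} integer $\mathcal{M}$ independent of parity; this amounts to a non-trivial coincidence between $\mu^+$ and $\mu^-$ that would need to be established using the CM functional equation.
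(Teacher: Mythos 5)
The statement you are addressing is Conjecture \ref{cm}; the paper does not prove it. It is formulated purely from SageMath computations of $\lambda(\theta_{n,0}(E))$ for small $n$, translated into a prediction for $\Sha(E/k_n)[p^\infty]$ via the Birch--Swinnerton-Dyer conjecture and the period computation of \eqref{perratio} and Proposition \ref{fudge}(2); the authors only \emph{speculate} about a route to a proof. Your proposal is likewise a program rather than a proof, and its central ingredients are exactly the ones that are open. Concretely: (i) over $K_g=\QQ(\zeta_m,\sqrt[e]{p})$ the prime above $p$ is ramified, and no bounded signed $p$-adic $L$-functions, signed Selmer groups, main conjecture, or control theorem are available there --- Kobayashi/Pollack/Sprung theory needs good supersingular reduction at an unramified prime, and Rubin's CM main conjecture concerns the two-variable theory of the Grossencharacter over the imaginary quadratic field, not the plus/minus objects you would need after descent to the cyclotomic line; constructing that bridge is precisely the missing idea, as you concede. (ii) Your appeal to Theorem \ref{thm:control} is not legitimate in this setting: that control theorem is proved only for potentially good \emph{ordinary} reduction, since it rests on Mazur's control theorem over $(K_g)_\infty$, which fails in the supersingular case; only the prime-to-$p$ descent of Lemma \ref{lem: sel1} transfers. (iii) Even granting a signed theory, your formula would still assume the $p$-part of BSD over every $k_n$ (Conjecture \ref{conj:pBSD}) and an unproved parity-independence needed to produce a single integer $\mathcal{M}$, which you also flag. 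So there is a genuine gap --- indeed the whole analytic core --- and the statement remains, as in the paper, a conjecture supported by data (Table \ref{tab:pot.ss}) rather than by argument.

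It is worth noting that your intended route differs from the paper's own speculation. You propose base-changing to $K_g$ and building a ramified-prime plus/minus theory from the CM structure, then descending; the paper instead points to the fact that these $j=0$ or $1728$ curves admit quartic or sextic twists which have good supersingular reduction at $p$ over $\QQ$, with $L$-functions related by finite-order Hecke character twists, and suggests a modification of the interpolation-comparison argument of Theorem \ref{quad} (which handled the quadratic case $e=2$ via Theorem \ref{thm: pal} and the known plus/minus theory for the twist). The twisting route keeps everything over $\QQ$ and only requires controlling a period ratio and Gauss sums, but the relevant twist is by a character that is not quadratic Dirichlet (the relevant character is a finite-order Hecke character of the CM field), so the period and interpolation comparison is the obstacle there; your route localizes the difficulty instead in ramified supersingular Iwasawa theory. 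Either way, the alternating $q_n$ behaviour visible after removing the period contribution (contrast Example \ref{example}, where it is absent) is the phenomenon any eventual proof must explain.
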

\begin{remark}
    We computed the $\lambda$-invariants using SageMath and combined with the Birch and Swinnerton-Dyer conjecture to formulate Conjecture~\ref{cm}. This conjecture states that $\Sha(E/k_{n})[p^\infty]$ shows growth similar to what is known for elliptic curves which have good supersingular reduction at $p$ (see, for example, \cite[Section~6.4]{pollack03}). We can equivalently state this in terms of the $\lambda$-invariants of the Mazur--Tate elements after adding the growth from the periods (\eqref{perratio} and Proposition \ref{fudge}), but the above formulation makes the underlying `plus and minus' behaviour more apparent. However, the data suggest that curves that do not satisfy these hypotheses have remarkably different growth of $\Sha(E/k_n)[p^\infty]$ as compared to what is known in the literature, as seen in Example \ref{example}.
\end{remark}
Let $E$ be as in Conjecture~\ref{cm}. An important property of curves that satisfy the hypotheses therein is that they have CM by $K=\QQ(\sqrt{-3})$ or $\QQ(i)$ and $p$ is inert in $K$. Let $O_K$ denote the ring of integers of $K$ and let $O_p$ denote the completion of $O_K$ at $p$. The $G_K$-representation attached to $E$ is given by the character
\[\rho_E: G_K \to \mathrm{Aut}_{O_p}(E[p^\infty])\cong O_p^\times.\]
Under the hypothesis $j=0$ or $1728$, one can show that there exists a character $\psi$ on $G_K$ such that $\rho_E\otimes \psi$ corresponds to the $G_K$-representation associated with an elliptic curve $E'$ defined over $\QQ$ with supersingular reduction at $p$. In fact, $\psi$ factors through the degree $p^2-1$ extension $K(E[p])/K$.
Furthermore, since $E$ has CM, the quadratic twist of $E$ by $K$ (denoted by $E^K$) is isogenous to $E$ over $\QQ$. Thus,
\begin{equation}\label{cm L eq}
    L(E/K,\chi, s)=L(E/\QQ,\chi, s)^2
\end{equation}
for a character $\chi$ on $\Gal(K\QQ_\infty/K)\cong\Gal(\QQ_\infty/\QQ)$. 
Thus, assuming the Birch and Swinnerton-Dyer conjecture holds over $k_n$, Conjecture \ref{cm} is equivalent to showing that 
\[\ord_p\left(\tau(\chi)\frac{L(E/K,\chi,1)}{\Omega_{E/\QQ}^2}\right) = \mathcal{M}(p^n-p^{n-1})+2q_n+\lambda^{\pm}\]
for constants $\mathcal{M}, \lambda^\pm$ independent of sufficiently large $n$. 
For $E'$, we have 
\[\ord_p\left(\tau(\chi)\frac{L(E'/K,\chi,1)}{\Omega_{E'/\QQ}^2}\right) = \mu (p^n-p^{n-1})+2q_n+\lambda'^{\pm}\]
for constants $\mu, \lambda^\pm$ independent of $n$, using the plus and minus $p$-adic $L$-functions for $E'$ and \eqref{cm L eq} for $E'$. Hence, Conjecture \ref{cm} is equivalent to the statement that the $p$-adic valuation of the algebraic part of $L(E'/K,\chi,1)$ exhibits the same type of growth with $n$ as that of $L(E'/K, \chi \psi^{-1},1)$ since
\[L(E/K,\chi,1)=L(E'/K,\chi\psi^{-1},1).\]
Thus, we speculate that it may be possible to make progress on Conjecture \ref{cm} by a modified argument similar to the one used in the proof of Theorem \ref{ssquad}.
\section{Generalisations to potentially ordinary modular forms}\label{sec: results on modularforms}
In this section, we prove Theorem \ref{thmC} from the Introduction. We also discuss a generalisation of Theorem \ref{thm: bsd} to modular forms under appropriate hypotheses. We begin by recalling a natural generalisation of the fact that $\lambda(\theta_n(E/\QQ)) \geq p^{n-1}$ at additive primes as discussed in section \ref{sec:known}.
Throughout this section, we will consider a Hecke eigenform $f \in S_k(\Gamma_1(N))$ (i.e., $f$ is an eigenvector for all the Hecke operators $T_l$ for $l\nmid N$ and $U_q$ for $q \mid N$) and assume that $f$ is new at $p$.
\begin{lemma} \label{lem:lambdabound}
    Let $f$ be a Hecke eigenform in $S_k(\Gamma_1(N))$ with $p^2\mid N$ and $a_p(f)=0$. Then 
    \[\lambda(\theta_{n,i}(f))\geq p^{n-1}\]
    for all $0\leq i\leq p-2$.
\end{lemma}

\begin{proof}
    The argument of \cite[Corollary~5.3]{doyon-lei2} goes through as the local Euler factor of $f$ at $p$ is trivial. 
\end{proof}

%
Let $K_f$ denote the Hecke field of $f$. Fix an odd prime $p$. Fix a place $v$ above $p$ in $K_f$. Let $V_f$ denote the two-dimensional $p$-adic representation of $G_{\QQ}$ associated to $f$ over the completion $(K_f)_v$. 
We make the following assumption on $V_f$:
\vspace{-5pt}
\begin{itemize}
    \item[(\mylabel{item_Ord}{\textbf{PotOrd}})] $V_f$ admits a one-dimensional unramified $G_{\Qp(\mu_p)}$-subrepresentation $V_f'\subset V_f$.
\end{itemize}
We will assume $V_f$ satisfies (\textbf{PotOrd}) throughout this section.
\begin{remark}
   When $f$ is associated with an elliptic curve with additive reduction at $p$, this corresponds to the notion of potentially ordinary reduction, where the ordinary reduction is achieved over the abelian extension $\Qp(\mu_p)$. Recall that in the case of elliptic curves, \textit{all} elliptic curves having potentially ordinary reduction at a prime $p>3$ achieve ordinary reduction over $\Qp(\mu_p)$ (here ordinary refers to good ordinary or multiplicative reduction). CM modular forms with level divisible by $p$ also provide a large class of examples satisfying (\textbf{PotOrd}). This assumption is motivated by \cite[Remark 12.7]{kato1} and \cite[\S~3]{muller24}.
 \end{remark}
The next two lemmas are adapted from \cite[Lemmas~5.3 and 5.6]{muller24}. While the cited article focuses on CM forms, the results we discuss here apply to all $f$ satisfying (\textbf{PotOrd}).
\begin{lemma}\label{lem: V_f alpha lemma}
    The one-dimensional subspace $V'_f$ is $G_{\Qp}$-stable. Moreover, there exists a character $\psi$ of $\Gal(\Qp(\mu_p)/\Qp)$ such that $V'_f\otimes\psi^{-1}$ is an unramified $G_{\Qp}$-representation. 
\end{lemma}
\begin{proof}
The quotient $V''_f\colonequals V_f/V'_f$ is ramified as a $G_{\Qp(\mu_p)}$-representation since $V_f'$ is unramified and the determinant of $V_f$ is ramified. Let $I \subset G_{\Qp(\mu_p)}$ be the inertia subgroup of $G_{\Qp(\mu_p)}$. Then, $V'_f$ is the unique non-trivial subspace of $V_f$ that is fixed by $I$. Since $I$ is a normal subgroup of $G_{\Qp}$, we see that $V'$ is stable under $G_{\Qp}$.  

Under (\textbf{PotOrd}), the action of $G_{\Qp}$ on $V'_f$ factors through $\Gal(\Qp^{\mathrm{ur}}(\mu_p)/\Qp)$ since it is unramified as a $G_{\Qp(\mu_p)}$-representation. Consider the decomposition 
\[\Gal(\Qp^{\mathrm{ur}}(\mu_p)/\Qp)\cong \Gal(\Qp^{\mathrm{ur}}/\Qp)\times \Gal(\Qp(\mu_p)/\Qp),\]
and note that $\Gal(\Qp(\mu_p)/\Qp)$ acts on $V'_f$ through a character $\psi$ of $\Gal(\Qp(\mu_p)/\Qp)$. Consequently, the $G_{\Qp}$-action on $V'_f\otimes \psi^{-1}$ factors through $\Gal(\Qp^{\mathrm{ur}}/\Qp)$, so $V'_f\otimes \psi^{-1}$ is an unramified representation of $G_{\Qp}$, as desired.
\end{proof}

Let $f=\sum_{n}{a}_n(f)q^n$ and let $\tilde{f}=\sum_{n}{a}_l(\tilde{f})q^n$ be the newform corresponding to the twist of $f$ by 
$\psi^{-1}$, i.e., $a_l(\tilde{f})=a_l(f)\psi(l)^{-1}$ for all primes $l$ such that $(l,p)=1$. Such an $\tilde{f}$ is unique since a newform is uniquely determined by all but finitely many of its Hecke eigenvalues (strong multiplicity one).
Let ${V}_{\tilde{f}}$ denote the $G_\QQ$-representation associated with $\tilde{f}$. 
\begin{lemma}\label{lem: ap is a unit}
    The Fourier coefficient $a_p(\tilde{f})$ is a $p$-adic unit, and thus the Euler factor of $L(\tilde{f},s)$ at $p$ is non-trivial.
\end{lemma}
\begin{proof}
First, note that ${V}'_{f} \otimes \psi^{-1}$ is a crystalline $G_{\Qp}$-representation as it is unramified.  Let $\alpha$ denote the eigenvalue of the action of $\varphi$ on $V'_f$, which has $p$-adic valuation 0.
Further, we have 
$$
P(x):=\det\left(1-\varphi x | \mathbb{D}_\mathrm{cris}(V_{\tilde{f}})\right)= \text{local~Euler~factor~of~}\tilde f\text{~at~}p.
$$
Since ${V}'_{f} \otimes \psi^{-1} \subseteq V_{\tilde{f}}$, we see that $\mathbb{D}_\mathrm{cris}({V}_{\tilde{f}})$ is either 1- or 2-dimensional. In the 1-dimensional case, $a_p(\tilde{f}) = \alpha$, which is a $p$-adic unit.  In the 2-dimensional case, let $\beta$ denote the other root of $P(x)$.  We have $\alpha + \beta = a_p(\tilde{f})$ and $\alpha \beta = \varepsilon(p) p$, where $\varepsilon$ is the nebentype of $\tilde{f}$.  Since $\alpha$ is a $p$-adic unit, $\ord_p(\beta)=1$, which implies $\ord_p(a_p(\tilde{f}))=0$, as desired.
\end{proof}

Lemma \ref{lem: ap is a unit} tells us that any forms satisfying \textbf{(PotOrd)} admit a twist (by a character of conductor at most $p$) with no $p$ in the level. 


\begin{remark}
    Clearly, one can construct examples of $f$ satisfying \textbf{(PotOrd)} by taking a cusp form that is good ordinary at $p$ and twisting it by a character of conductor $p$. Moreover, Lemma \ref{lem: ap is a unit} tells us that \emph{all} possible examples of $f$ satisfying \textbf{(PotOrd)} must arise this way. Alternatively, under the local Langlands correspondence for $GL(2)$ (for $l=p$, using the local-global compatibility result of Saito \cite{Saito1997ModularFA}), it is clear that $V_f$ satisfying \textbf{(PotOrd)} corresponds to ramified principal series with $PS(\chi_1, \chi_2)$ with $\chi_1,\chi_2$ ramified (of conductor at most $p$) or a twist of a Steinberg representation. When $f$ corresponds to an elliptic curve over $\QQ$, this condition corresponds to $E$ having potentially good ordinary (ramified principal series) or potentially multiplicative (twist of Steinberg) reduction at $p$.
    
\end{remark}
Thanks to Lemma~\ref{lem: ap is a unit}, we can attach a $p$-adic $L$-function $L_p(\tilde{f})$ to $\tilde{f}$ and use an appropriate twist of $L_p(\tilde{f})$ to get a $p$-adic $L$-function for $f$. This is the same as the method of Delbourgo \cite{del-compositio} for elliptic curves with additive reduction at $p$, which we discussed earlier in \S\ref{sssec: Delbourgo's Lp}; the work of Müller \cite{muller24} extends this idea further to higher-weight CM modular forms and in fact proves Kato's main conjecture for these forms under appropriate hypotheses including \textbf{(PotOrd)}. 

Using $\tilde f$, we prove the following reminiscent of Theorem~\ref{quad}. This is Theorem \ref{thmC} from the Introduction.
\begin{theorem}\label{thm: modforms twist}
    Let $p$ be an odd prime and let $f \in S_k(\Gamma_1(N))$ be a $p$-new Hecke eigenform with $k\geq 2$, $p^2\mid N$.  Assume that 
    \begin{itemize}
        \item the $p$-adic $G_\QQ$-representation $V_f$  associated with $f$ satisfies {(\textbf{PotOrd})} and 
        \item the Iwasawa invariants satisfy $\mu(\theta_{n,i}(f))=0$ and $\lambda(\theta_{n,i}(f))\leq p^{n-1}(p-1)$ for $n\gg 0$.
    \end{itemize}
     Then, there exist constants $m$ and $\lambda_0$ dependent only on $f$ and $i$, with $0<m<1$ and $\lambda\ge0$, such that for all $n\gg 0$, 
    \[\lambda(\theta_{n,i}(f))= m\cdot(p-1)\cdot p^{n-1} + \lambda_0.\]
\end{theorem}
\begin{proof}
We only discuss the case where $i=0$ for simplicity, but the same argument works for all the $\omega^i$-isotypic components $\theta_{n,i}(f)$.
    
As $\psi$ is a character on $\Gal(\Qp(\mu_p)/\Qp)$, we can write it as a power of the Teichm\"uller character $\omega$. Suppose $\psi=\omega^j$ for $0\leq j\leq p-1$. For a character $\chi$ on $\Gal(k_n/\QQ)$, we have 
    \[L(f, \chi,1)= L(\tilde{f}, \omega^{j}\chi,1)\]
from the definition of $\tilde{f}$. From the interpolation property of $\theta_{n,0}(f)$, we have
\[\overline{\chi}(\theta_{n, 0}(f)) = {\frac{\tau(\overline{\chi})}{\tau(\omega^{j}\overline{\chi})}}\cdot {\frac{\Omega_{\tilde{f}}^{\epsilon'}}{\Omega_f^+}}\cdot\left(\tau(\omega^{j}\overline{\chi}) \frac{L(\tilde{f}, \omega^{j}\chi,1)}{\Omega_{\tilde{f}}^{\epsilon'}}\right),\]
where $\epsilon'=(-1)^{j}$.  
From the $p$-adic $L$-function of $\tilde{f}$, we get  
    \[L_p(\tilde{f}, \omega^{j},\zeta_{p^n}-1) = \frac{1}{\alpha^{n+1}}\left(\tau(\omega^{j}\overline{\chi}) \frac{L(\tilde{f},\omega^{j}{\chi}, 1)}{\Omega_{\tilde{f}}^{\epsilon'}}\right),\]
    where $\zeta_{p^n}$ is the image of a topological generator of $\Gamma=\Gal(\QQ_\infty/\QQ)$ under $\overline{\chi}$ and $\alpha$ is the root of the Hecke polynomial of $\tilde{f}$ at $p$, which was shown to have trivial $p$-adic valuation in Lemma \ref{lem: ap is a unit}. Since $\tilde{f}$ is ordinary at $p$ and we have normalised the Mazur--Tate elements using cohomological periods, we see that $L_p(\tilde{f}, \omega^i, T)\in \Zp[[T]]$ for all $0\leq i\leq p-1$. Following the argument used for the proof of Theorem \ref{quad}, we have for $n\gg 0$, 
    \[\ord_p(\overline{\chi}(\theta_{n, 0}(f))) = \ord_p\left(\frac{\Omega_{\tilde{f}}^{\epsilon'}}{\Omega_f^+}\right)+ \mu(L_p(\tilde{f}, \omega^{j},T))+\frac{\lambda(L_p(\tilde{f}, \omega^{j},T))}{p^{n-1}(p-1)}\]
    since the ratio of Gauss sums has trivial $p$-adic valuation. We know that $\lambda(L_p(\tilde{f}, \omega^{j},T))$ is independent of $n$. For each $n$, define $\lambda_n\colonequals\lambda(\theta_{n,0}(f))$. Under our assumption on $\lambda_n$, we can compute $\ord_p(\overline{\chi}(\theta_{n,0}(f))$ using the $p$-adic Weierstrass preparation theorem to get
    \[\ord_p(\overline{\chi}(\theta_{n,0}(f)))= \frac{\lambda_n}{p^{n-1}(p-1)}\leq 1,\]
    (thanks to the assumption $\mu(\theta_{n,0}(f))=0$). Equating the two valuations, we see that $\mu(L_p(\tilde{f}, \omega^{j},T))$ must be $0$ (since $0< \lambda_n\leq p^{n-1}(p-1)$) and we get the desired formula for $\lambda_n$. We see that the constant $m=\ord_p(\Omega_{\tilde{f}}^{\epsilon'}/{\Omega_f^+})$ (for other $i$, one may have to take $\Omega_f^-$ in the denominator) is the $p$-adic valuation of the ratio of cohomological periods of $\tilde{f}$ and $f$, and this must be positive because of the lower bound on $\lambda_n$ from Lemma \ref{lem:lambdabound}. The constant $\lambda_0$ is the $\lambda$-invariant $\lambda(L_p(\tilde{f},\omega^j, T))$ and $m\leq1$ since $\lambda_n\leq p^n-1$ by definition. 
\end{proof}
\begin{remark}
   The assumptions on $\mu(\theta_{n,i}(f))$ and $\lambda(\theta_{n,i}(f))$ can be replaced with the assumptions $\mu(L_p(\tilde{f},\omega^j,T))=0$ and $\ord_p(\Omega_{\tilde{f}}^{\epsilon'}/{\Omega_f^+})<1$ to get the same result using the bounding argument from Theorem~\ref{quad}. We omitted this from the statement since this may not be easy to verify directly for a given example. 
\end{remark}

\section{Numerical data for the potentially supersingular case}\label{pss data}
In the following table, for an elliptic curve $E$ defined over $\QQ$ with additive potentially supersingular reduction at a prime $p$, $e$ is the semistability defect (see Equation \eqref{eq: semistabilitydef}) and \[f_n \colonequals \left\lfloor(p^{n}\ord_p(\Delta_E))/12\right\rfloor- \left\lfloor(p^{n-1}\ord_p(\Delta_E))/12\right\rfloor\] denotes the valuation coming from the ratio of periods as in Equation \eqref{perratio} and Proposition~\ref{fudge}(2). The formulae for $\lambda(\theta_{n,0}(E))$ are seen to hold for $n\leq 8$ in all the cases.
\begin{table}[h]
    \centering
    \begin{tabular}{|c|c|c|c|c|} \hline 
         $(E, p)$&  $\ord_p(\Delta)$&  $e$ &  $\lambda(\theta_{n,0}(E))$&$\lambda(\theta_{n,0}(E))-f_n$ \\ \hline 
 (121c1,11)& 4& 3& $7\cdot11^{n-1}$&$ 4q_n+\begin{cases}
     4 & \text{if $n$ is odd}\\ 0 & \text{if $n$ is even}  
 \end{cases}$ \\ \hline 
 (968d1,11)& 2& 6& $2\cdot 11^{n-1}$&$4 q_{n-1}+\begin{cases}
     1 &  \text{if $n$ is odd}\\ 3 &  \text{if $n$ is even}
 \end{cases}$ \\ \hline  
         (2890h1, 17)&  2&  6&  $3\cdot17^{n-1}+1$&$6q_{n-1}+
         \begin{cases}
             2 &  \text{if $n$ is odd}\\ 6 &  \text{if $n$ is even}
         \end{cases}$ \\ \hline 
 (2890e1, 17)& 8& 3& $11\cdot17^{n-1}$&$6 q_{n-1}+\begin{cases}
     0 &  \text{if $n$ is odd}\\ 6 &  \text{if $n$ is even}
 \end{cases}$ \\ \hline 
 (2116b1,23)& 2& 6& $4\cdot23^{n-1}$&$8 q_{n-1}+\begin{cases}
     1 &  \text{if $n$ is odd}\\ 7 &  \text{if $n$ is even}
 \end{cases}$ \\ \hline 
 (4232i1, 23)& 10& 6& $19\cdot23^{n-1}+1$&$16 q_{n-1}+\begin{cases}
     1 &  \text{if $n$ is odd}\\ 17 &  \text{if $n$ is even}
 \end{cases}$ \\ \hline 
 (3844d1, 31)& 3& 4& $8\cdot31^{n-1}$&$16 q_{n-1}+\begin{cases}
     1 &  \text{if $n$ is odd}\\ 15 &  \text{if $n$ is even}
 \end{cases}$ \\ \hline 
 (1089a1, 11)&  4&  3&  $4\cdot 11^{n-1}+ 3q_{n-1}+ \begin{cases}
     1 &  \text{if $n$ is odd}\\ 3 &  \text{if $n$ is even}
 \end{cases}$&$q_n+\begin{cases}
     2 &  \text{if $n$ is odd} \\ 0 &  \text{if $n$ is even}
 \end{cases}$ \\ \hline 
 (1089b1, 11)&  10&  6&  $9\cdot11^{n-1}+3 q_{n-1}+2$&$q_n+\begin{cases}
     2 &  \text{if $n$ is odd} \\ 0 &  \text{if $n$ is even}
 \end{cases}$ \\ \hline 
         (3872h2,11)&  3&  4&  $3\cdot11^{n-1}+5q_{n-1}+ \begin{cases}
             1 &  \text{if $n$ is odd} \\ 5 &  \text{if $n$ is even}
         \end{cases}$&$q_n+\begin{cases}
     2 &  \text{if $n$ is odd} \\ 0 &  \text{if $n$ is even}
 \end{cases}$ \\ \hline
 (4761b1,23)& 2& 6& $4\cdot23^{n-1}+15 q_{n-1}+\begin{cases}1& \text{if $n$ is odd}\\17 & \text{if $n$ is even} \end{cases}$&$q_n+2$ \\ \hline
    \end{tabular}
    \vspace{0.5cm}
    
    \caption{$\lambda$ invariants at potentially supersingular primes}
    \label{tab:pot.ss}
\end{table}

\newpage

\bibliographystyle{amsalpha}
\bibliography{references}
\end{document}